\documentclass[a4paper,11pt,reqno]{amsart}

\usepackage{amsmath}%
\usepackage{amssymb}%
\usepackage{graphicx}
\usepackage{float}
\usepackage{color}
\setlength{\parindent}{0 em} \setlength{\textwidth}{16cm} \setlength{\textheight}{24cm}
\setlength{\hoffset}{0 em}
\setlength{\oddsidemargin}{0 em}
\setlength{\voffset}{-5 em}
\setlength{\evensidemargin}{0 em}

\allowdisplaybreaks

\usepackage{setspace}
\usepackage{cite}

%
\theoremstyle{plain}

\newtheorem{theorem}{Theorem}
\newtheorem{definition}[theorem]{Definition}
\newtheorem{example}[theorem]{Example}
\newtheorem{lemma}[theorem]{Lemma}
\newtheorem{proposition}[theorem]{Proposition}
\newtheorem{remark}[theorem]{Remark}
\newtheorem{corollary}[theorem]{Corollary}

\DeclareMathOperator\sign{sign}

\def\N{\mathbb{N}}

\def\R{\mathbb{R}}

\def\D{\mathbb{D}}

\def\ex{\mathbb{E}}

\def\eins{{\bf 1}}

\def\F{\mathcal{F}}

\def\L{\mathbb{L}}



\begin{document}

\title[Discretizing Malliavin calculus]{Discretizing Malliavin calculus}

 \author[C. Bender and P. Parczewski]{Christian Bender and Peter Parczewski}

\address{Saarland University, Department of Mathematics PO Box 151150, D-66041 Saarbr\"ucken, Germany, University of Mannheim, Institute of Mathematics A5,6, D-68131 Mannheim, Germany.}
\email{bender@math.uni-sb.de, parczewski@math.uni-mannheim.de}

\date{\today}

\begin{abstract}
Suppose $B$ is a Brownian motion and $B^n$ is an approximating sequence of rescaled random walks on 
the same probability space converging to $B$ pointwise in probability. We provide necessary and sufficient conditions for weak and strong $L^2$-convergence of a discretized Malliavin derivative, a discrete Skorokhod integral,
and discrete analogues of the Clark-Ocone derivative to their continuous counterparts. 
Moreover, given a sequence $(X^n)$ of random variables which admit a chaos decomposition in terms of discrete multiple Wiener integrals with respect to $B^n$, we derive necessary and sufficient conditions for strong $L^2$-convergence 
 to a $\sigma(B)$-measurable random variable $X$  via convergence 
of the discrete chaos coefficients of $X^n$ to  the continuous chaos coefficients of $X$. In the special case of binary noise, our results
support the known formal analogies between Malliavin calculus on the Wiener space and Malliavin calculus on the Bernoulli space by rigorous  $L^2$-convergence results.
\end{abstract}

\keywords{Malliavin calculus, strong approximation, stochastic integrals, S-transform, 
chaos decomposition, Clark-Ocone derivative, invariance principle}

\subjclass{60H07, 60H05, 60F25}
\maketitle

\section{Introduction}

Let $B= (B_{t})_{t \geq 0}$ be a Brownian motion on  a probability space $(\Omega, \F, P)$, 
where the $\sigma$-field $\F$ is generated by the Brownian motion and completed by null sets. 
Suppose $\xi$ is a square-integrable random variable with zero expectation and variance one.
As a
discrete counterpart of $B$ we consider, for every $n\in \mathbb{N}=\{1,2,\ldots\}$, a random walk approximation
\[
B^n_t: =\frac{1}{\sqrt{n}} \sum\limits_{i=1}^{\left\lfloor nt\right\rfloor} \xi_{i}^{n} \ , \ t \geq 0,
\]
where $(\xi_{i}^n)_{i\in\N}$ is a sequence of independent random variables which have the same distribution as $\xi$.
We assume that the approximating sequence $B^n$ converges to $B$ pointwise in probability, i.e.
\begin{align}\label{eq_Donsker_embedding}
\forall \ t\geq 0 \ :\quad \lim_{n\rightarrow \infty} B^n_t = B_t \;\textnormal{ in probability.}
\end{align}
The aim of the paper is to provide $L^2$-approximation results for some basic operators of Malliavin calculus with respect to the Brownian motion $B$
such as the chaos decomposition, the Malliavin derivative, and the Skorokhod integral by appropriate sequences of 
approximating operators 
based on the discrete time noise $(\xi^n_i)_{i\in \N}$. It turns out that in all our approximation results, the limits 
do not depend on the distribution of the discrete time noise, hence our results can be regarded as some kind of invariance 
principle for Malliavin calculus.

We briefly discuss our main convergence results in a slightly informal way:
\begin{enumerate}
 \item {\it Chaos decomposition: }The heuristic idea behind the chaos decomposition in terms of multiple Wiener integrals is to project 
 a random variable $X \in L^2(\Omega,\F,P)$ on products of the white noise $\dot B_{t_1} \cdots \dot B_{t_k}$.  This idea can be made 
 rigorous with respect to the discrete noise $(\xi^n_i)_{i \in \N}$ by considering the discrete time functions
 $$
 f_X^{n,k}(i_1,\ldots i_k)= \frac{n^{k/2}}{k!} \ex\left[X \prod_{j=1}^k \xi^n_{i_j}\right] 
 $$
 for pairwise distinct $(i_1,\ldots i_k)\in \N^k$. Our results show that, after a natural embedding as step functions into continuous time, 
 the sequence $(f_X^{n,k})_{n\in \N}$ converges strongly in $L^2([0,\infty)^k)$ to the $k$th chaos coefficient of $X$, for every 
 $k\in \N$ (Example \ref{example:Wiener_integrals}). This is a simple consequence of a general Wiener chaos limit theorem 
(Theorem \ref{thm_S_transform_convergence_2}), which provides equivalent conditions for the strong $L^2(\Omega,\F,P)$-convergence 
of a sequence of random variables $(X^n)_{n\in \N}$ (with each  $X^n$  admitting a chaos decomposition via 
multiple Wiener integrals with respect to the discrete 
time noise $(\xi^n_i)_{i\in \N}$) in terms of the chaos coefficient functions. As a corollary, this Wiener chaos limit theorem 
lifts a classical result by \cite{Surgailis} on convergence in distribution of discrete multiple Wiener integrals 
to strong $L^2(\Omega,\F,P)$-convergence (in our setting, i.e. when the limiting multiple Wiener integral is driven by a Brownian motion).
\item \emph{Malliavin derivative:} With our weak moment assumptions on the discrete time noise, we cannot define 
a discrete Malliavin derivative in terms of a polynomial chaos as in the survey paper by \cite{Gzyl} and the references therein.
Instead we introduce the discretized Malliavin derivative at time $j\in \N$ with respect to the noise $(\xi^n_i)_{i\in \N}$ by
$$
D^n_jX=\sqrt{n} \ex[\xi^n_j X|(\xi^n_i)_{i\in \N \setminus \{j\}}],
$$
which is the gradient of the best approximation in $L^2(\Omega,\F,P)$ of $X$ as a linear function in $\xi^n_j$ with $\sigma(
\xi^n_i,\; {i\in \N \setminus \{j\}})$-measurable coefficients. Theorem \ref{thm_Malliavin_derivative_convergence_1} below implies that, if $(X^n)$ converges weakly in $L^2(\Omega,\F,P)$ 
to $X$ and the sequence of discretized Malliavin derivatives $(D^n_{\lceil n \cdot\rceil} X^n)_{n\in \N}$ converges 
weakly in $L^2(\Omega\times [0,\infty))$, then $X$ belongs to the domain of the continuous Malliavin derivative and the continuous
Malliavin derivative appears as the weak $L^2(\Omega\times [0,\infty))$-limit. As the Malliavin derivative is  a closed, but discontinuous operator, this 
is the best type of approximation result which can be expected when discretizing the Malliavin derivative. Sufficient conditions for 
the strong convergence of a sequence of discretized Malliavin derivatives, which can be checked in terms of the discrete-time approximations, 
are presented in Theorems \ref{thm:D22} and \ref{thm_Malliavin_derivative_convergence}. 

\item \emph{Skorokhod integral:} Defining the discrete Skorokhod integral as the adjoint operator to the discretized Malliavin derivative 
leads to
$$
\delta^n(Z^n):=\lim_{M \rightarrow \infty } \sum_{i=1}^M \ex[Z^n_i|(\xi^n_j)_{j\in \{1,\ldots,M\}\setminus \{i\}}] \frac{\xi^n_i}{\sqrt{n}},
$$
for a suitable class of discrete time processes $Z^n$, which is in line with the Riemann-sum approximation 
for Skorokhod integrals in terms of the driving Brownian motion in \cite{NualartPardoux}.
Analogous results for the `closedness across the discretization levels' as in the case of the discretized Malliavin derivative 
and sufficient conditions for strong $L^2(\Omega,\F,P)$-convergence of a sequence of discrete Skorokhod integrals
are provided in Theorems \ref{thm:Skorokhod_convergence_1}, \ref{thm:L22} and \ref{thm:Skorokhod_convergence_2}. When restricted to predictable integrands,
the convergence results for the Skorokhod integral give rise to necessary and sufficient conditions for strong and weak 
$L^2(\Omega,\F,P)$-convergence of a sequence of discrete It\^o integrals (Theorem \ref{thm:Ito_integral}). This result can be applied 
to study different discretization schemes for the generalized Clark-Ocone derivative (which provides the integrand in the predictable representation
of a square-integrable random variable as It\^o integral with respect to the Brownian motion $B$). In this respect, Theorems \ref{thm:Clark-Ocone_1}
and \ref{thm:Clark-Ocone_2} below complement related results in the literature such as \cite{Briand_Delyon_Memin, Leao_Ohashi} and the references 
therein.
\end{enumerate}

We note that related classical semimartingale limit theorems for stochastic integrals (with adapted integrands) \cite{JMP,KP} and for 
multiple Wiener integrals \cite{Surgailis, AT, Av}, or 
robustness results for martingale representations \cite{JaMP,Briand_Delyon_Memin} are usually obtained in the framework of (or using techniques of) convergence in distribution (on the Skorokhod 
space). In contrast, we exploit that strong and weak convergence in $L^2(\Omega,\F,P)$ can be characterized in terms 
of the $S$-transform, which is an important tool in white noise analysis, see e.g. \cite{Kuo, Janson, Holden_Buch}, 
and corresponds to taking expectation under suitable changes of measure. We introduce a discrete version of the $S$-transform 
in terms of the noise $(\xi^n_i)_{i\in \N}$ and show that strong and weak $L^2(\Omega,\F,P)$-convergence can be equivalently expressed
via convergence of the discrete $S$-transform to the continuous $S$-transform (Theorem \ref{thm_S_transform_convergence_1}).
With this observation at hand, all our convergence results can be obtained in a surprisingly simple way by  computing suitable $L^2(\Omega, \sigma(\xi^n_i)_{i\in \N}, P)$-inner products 
and their limits as $n$ tends to infinity.
However, all these results can be seen as strong and weak invariance principles for Malliavin calculus.

The paper is organized as follows: In Section \ref{section_s_transform}, we introduce the discrete $S$-transform 
and discuss the connections between weak (and strong) $L^2(\Omega,\F,P)$-convergence and the convergence of the discrete $S$-transform 
to the continuous one. Equivalent conditions for the weak $L^2$-convergence of sequences of discretized Malliavin derivatives 
and discrete Skorokhod integrals to their continuous counterparts are derived in Section \ref{section_Malliavin_weak}. Combining these weak 
$L^2$-convergence results with the duality between discrete Skorokhod integral and discretized Malliavin derivative, we also identify sufficient 
conditions for the strong $L^2$-convergence which can be checked solely in terms of the discrete time approximations. We are not aware of 
any such convergence results for general discrete time noise distributions in the literature. In Section \ref{section:Ito},
we specialize to the nonanticipating case and prove limit theorems for discrete It\^o integrals and discretized Clark-Ocone derivatives.
The strong $L^2$-Wiener chaos limit theorem is presented in Section \ref{section_chaos_decomposition}, and is applied 
in order to provide equivalent conditions for the strong $L^2$-convergence of sequences of discretized Malliavin derivatives 
and discrete Skorokhod integrals in terms of tail conditions of the discrete chaos coefficients in Section \ref{sec:malliavin2}. Finally, in Section \ref{section_binary}, we consider the special case
of binary noise, in which discrete Malliavin calculus is very well studied, see e.g. the monograph by \cite{Privault_book}. 
We explain that the statement of our convergence results can be simplified in this case and demonstrate by a toy example 
how to apply the results numerically in a Monte Carlo framework.

\section{Weak and strong $L^2$-convergence  via  discrete S-transforms}\label{section_s_transform}



In this section, we study strong and weak $L^2(\Omega,\F,P)$-convergence of a sequence $(X^n)$ of random variables, where $X^n$ is 
$\mathcal{F}^n:=\sigma(\xi_i^n,\;i\in\N)$-measurable,
to an $\mathcal{F}$-measurable $X$. As a main result of this section (Theorem \ref{thm_S_transform_convergence_1}), we provide an equivalent criterion for this convergence, 
which only requires to compute a family of $L^2(\Omega,\mathcal{F}^n,P)$-inner products (hence, expectations which involve functionals 
of the discrete time noise $(\xi^n_i)_{i\in \N}$ only) and their limits as $n$ tends to infinity.

Before doing so, let us recall that  $B^n$ can be constructed via a Skorokhod embedding of the random walk 
 $$
 \left(\sum\limits_{i=1}^{j} \xi_{i}\right)_{j\in \N},\quad \xi_1,\xi_2,\ldots  \textnormal{ independent and with the same distribution as }\xi, 
 $$
 into the rescaled Brownian motion $(\sqrt{n} B_{t/n})_{t\geq 0}$. In this way, one obtains, for every $n\in \N$, a sequence 
 of stopping times $(\tau^n_i)_{i\in \N_0}$ with respect to the augmentation of the filtration generated by $B$ such that
 \begin{equation}\label{eq:Skorokhod_embedding}
 B^n := \left(B_{\tau^n_{\left\lfloor nt\right\rfloor}}\right)_{t\geq 0}
 \end{equation}
 has the same distribution as $(\frac{1}{\sqrt{n}} \sum\limits_{i=1}^{\left\lfloor nt\right\rfloor} \xi_{i} )_{t\geq 0}$ and converges to $B$  uniformly on compacts in probability (see e.g. \cite[Lemma 5.24 (b)]{Moerters_Peres}). 

We now introduce the $S$-transform simultaneously in the continuous time setting  and the discrete time setting, which turns ou to be the key 
tool for the proofs of our limit theorems.
Recall, that the mapping $\eins_{(0,t]}\mapsto B_t$ can be extended to a continuous linear mapping from 
$L^2([0,\infty))$ to $L^2(\Omega,\mathcal{F},P)$, which is known as the \emph{Wiener integral}. We denote
the Wiener integral of a function $f\in L^2([0,\infty))$ by $I(f)$.
The \emph{discrete Wiener integral} is given by
$$
I^{n}(f^{n}) := \frac{1}{\sqrt{n}}\sum\limits_{i=1}^{\infty} f^n(i) \xi_{i}^{n}.
$$
Here, the discrete time function $f^n$ is a member of
\[
L^2_n(\N) := \left\{f^n: \N \rightarrow \R : \ \|f^n\|_{L^2_n(\N)}^2 := \frac{1}{n}\sum_{i=1}^{\infty} (f^n(i))^2<\infty\right\},
\]
which obviously ensures that the series $I^{n}(f^{n})$ converges (strongly) in $L^2(\Omega,\mathcal{F}^n,P)$.

 The \emph{Wick exponential} is, by definition, the stochastic exponential of a Wiener integral $I(f)$, i.e.,
\begin{equation*}
\exp^{\diamond}(I(f)) := \exp\left(I(f) - 1/2\int_{0}^{\infty}f^{2}(s) ds\right). 
\end{equation*}
Hence, its discrete counterpart, the \emph{discrete Wick exponential}, is given by
\begin{equation}\label{eq_discrete_Wick_exp}
\exp^{\diamond_n}(I^n(f^n)) := \prod\limits_{i=1}^{\infty}\left(1+ \frac{1}{\sqrt{n}} f^n(i)\xi_{i}^{n}\right).
\end{equation}
In particular, by Fatou's lemma and the estimate $1+x \leq \exp(x)$,
\begin{equation}\label{eq:exponential_L2}
\ex[(\exp^{\diamond_n}(I^n(f^n)))^2]\leq   \exp(\|f^n\|_{L^2_n(\N)}^2)<\infty.
\end{equation}
Notice also that
\begin{eqnarray}\label{eq:Doleans_Dade_discrete}
  \nonumber \exp^{\diamond_n}(I^n( f^n))&=&1+\sum_{i=1}^\infty \left(\exp^{\diamond_n}(I^n(f^n\eins_{[1,i]})) - \exp^{\diamond_n}(I^n(f^n\eins_{[1,i-1]})) \right)
  \\ &=&1+\sum_{i=1}^\infty f^n(i) \exp^{\diamond_n}(I^n(f^n\eins_{[1,i-1]})) \frac{\xi^n_i}{\sqrt{n}},
 \end{eqnarray}
 which is the discrete counterpart of the Dol\'eans-Dade equation.

 We finally recall that, for every $X \in L^2(\Omega, \F, P)$ and $f \in L^2([0,\infty))$, the \emph{S-transform} is defined as
\[
(SX)(f) := \ex[X \exp^{\diamond}(I(f))]. 
\]
Analogously, for every $X^n \in L^2(\Omega, \F^n, P)$ and $f^n \in L^2_n(\N)$, we introduce the \emph{discrete S-transform} as
\[
(S^nX^n)(f^n) := \ex[X^n \exp^{\diamond_n}(I^n(f^n))]. 
\]
We emphasize that the $S$-transform is a powerful tool in the white noise analysis, see, e.g., \cite{Kuo},
and has been succesfully applied in the theory of stochastic partial differential equations, see \cite{Holden_Buch}.
To the best 
of our knowledge the discrete $S$-transform has, however, not been studied in the literature.

Let us next denote by 
 $\mathcal{E}$ the set of step functions on left half-open intervals, i.e., functions of the form
 $$
 g(x)= \sum\limits_{j=1}^{m} a_j \eins_{(b_j,c_j]}(x),\quad m\in \N, a_j,b_j, c_j\in \R.
 $$
As the set of Wick exponentials of step functions $\{\exp^{\diamond}(I(g)),\; g\in \mathcal{E}\}$ is total in $L^2(\Omega,\mathcal{F},P)$,
see e.g. \cite[Corollary 3.40]{Janson},
every $L^2(\Omega,\F,P)$-random variable is uniquely determined by its $S$-transform. More precisely, if for $X,Y\in L^2(\Omega,\mathcal{F},P)$,
$
(SX)(g)=(SY)(g)
$
for every $g\in \mathcal{E}$, then $X=Y$ $P$-almost surely. We define the discretization of a step function $g\in \mathcal{E}$  as
\[
\check{g}^n = (\check{g}^n(1), \check{g}^n(2), \ldots) := \left(g(1/n), g(2/n), \ldots\right),
\]
and notice that 
\begin{equation*}
\{\check{g}^n : g \in \mathcal{E}\} \subset L^2_n(\N) 
\end{equation*}
is the dense  subspace of discrete time functions with finite support. 

The convergence results of integral and derivative operators in this paper rely on the following characterization of $L^2(\Omega,\F,P)$-convergence in terms of
convergence of the discrete $S$-transform to the continuous $S$-transform.
\begin{theorem}\label{thm_S_transform_convergence_1}
 Suppose $X, X^n \in L^2(\Omega, \F, P)$ for every $n\in\N$, with $X^n$ being $\F^n$-measurable. Then the following assertions are equivalent as $n$ tends to infinity:
 \begin{enumerate}
  \item[(i)] $X^n \rightarrow X$   strongly (resp. weakly) in $L^2(\Omega, \F, P)$.
    \item[(ii)] 
  $(S^n X^n)(\check{g}^n) \rightarrow (S X)(g)$ for every $g\in \mathcal{E}$, and additionally  $\ex[(X^n)^2]  \rightarrow \ex[X^2]$ in the case of strong convergence 
  (resp. $\sup_{n\in \N} \ex[(X^n)^2]<\infty$ in the case of weak convergence).
 \end{enumerate}
 Moreover, in the case of strong convergence, (i) is also equivalent to
 \begin{enumerate}
  \item[(iii)]$\left(X^n, \exp^{\diamond_n}(I^n(\check{g}^n))\right)  {\rightarrow} \left(X, \exp^{\diamond}(I(g))\right)$ in 
  distribution for every $g \in \mathcal{E}$, and $((X^n)^2)_{n \in \N}$ is uniformly integrable.
 \end{enumerate}

 \end{theorem}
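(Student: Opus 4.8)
The plan is to reduce the whole statement to a single analytic fact about the test functionals, namely the strong convergence of the discrete Wick exponentials: for every $g\in\mathcal{E}$,
\begin{equation*}
\exp^{\diamond_n}(I^n(\check g^n)) \longrightarrow \exp^{\diamond}(I(g)) \quad\text{strongly in } L^2(\Omega,\F,P).
\end{equation*}
Abbreviating $E^n_g:=\exp^{\diamond_n}(I^n(\check g^n))$ and $E_g:=\exp^{\diamond}(I(g))$, I would prove this in two steps. First, realizing all objects on one probability space through the Skorokhod embedding \eqref{eq:Skorokhod_embedding}, so that $B^n\to B$ uniformly on compacts in probability, I would show $E^n_g\to E_g$ in probability. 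Since $\check g^n$ has finite support, $E^n_g$ solves the finite difference equation \eqref{eq:Doleans_Dade_discrete}, which is precisely the Euler--Dol\'eans-Dade scheme, driven by the increments $\xi^n_i/\sqrt n=B^n_{i/n}-B^n_{(i-1)/n}$, for the linear equation solved by the stochastic exponential $E_g$; exploiting that $g$ is piecewise constant, this convergence in probability can be read off from the convergence of the driving random walk. Second, using independence together with $\ex[\xi]=0$ and $\ex[\xi^2]=1$, the second moments factorize as $\ex[(E^n_g)^2]=\prod_i(1+\tfrac1n g(i/n)^2)$, whose logarithm is a Riemann sum; hence $\ex[(E^n_g)^2]\to \exp(\int_0^\infty g^2(s)\,ds)=\ex[(E_g)^2]$. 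Convergence in probability together with convergence of the $L^2$-norms then upgrades to strong $L^2$-convergence. This lemma is the main obstacle, since it is the only place where the weak two-moment hypothesis on the noise (and hence the invariance-principle content) enters.

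Granting the lemma, the equivalence (i)$\Leftrightarrow$(ii) is Hilbert-space bookkeeping. Note that $(S^nX^n)(\check g^n)=\langle X^n,E^n_g\rangle$ and $(SX)(g)=\langle X,E_g\rangle$. For (i)$\Rightarrow$(ii), strong (resp. weak) convergence of $X^n$ combined with strong convergence $E^n_g\to E_g$ passes to the inner product; the norm assertions follow from strong convergence, respectively from boundedness of weakly convergent sequences (Banach--Steinhaus). For (ii)$\Rightarrow$(i) in the weak case, $\sup_n\ex[(X^n)^2]<\infty$ makes $(X^n)$ weakly precompact, and for any weakly convergent subsequence $X^{n_k}\rightharpoonup Y$ the splitting $\langle X^{n_k},E^{n_k}_g\rangle=\langle X^{n_k},E_g\rangle+\langle X^{n_k},E^{n_k}_g-E_g\rangle$, together with strong convergence of the exponentials, yields $(SY)(g)=(SX)(g)$ for all $g\in\mathcal{E}$; by totality of the Wick exponentials $Y=X$, and since every subsequential limit equals $X$ the whole sequence converges weakly. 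In the strong case, (ii) first gives weak convergence as above, and $\ex[(X^n)^2]\to\ex[X^2]$ then promotes this to strong convergence via $\|X^n-X\|^2=\ex[(X^n)^2]-2\langle X^n,X\rangle+\ex[X^2]\to0$.

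For the strong-case equivalence with (iii) I would argue as follows. For (i)$\Rightarrow$(iii), strong $L^2$-convergence gives convergence in probability of $X^n$ and, by the lemma, of $E^n_g$, hence joint convergence in distribution of the pair; moreover $(X^n)^2\to X^2$ in $L^1$ forces uniform integrability of $((X^n)^2)$. For (iii)$\Rightarrow$(i) I would recover (ii). The lemma yields $E^n_g\to E_g$ in $L^2$, so $((E^n_g)^2)$ is uniformly integrable; together with the uniform integrability of $((X^n)^2)$ and the bound $|X^nE^n_g|\le\tfrac12((X^n)^2+(E^n_g)^2)$, the products $(X^nE^n_g)$ are uniformly integrable. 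Joint convergence in distribution gives $X^nE^n_g\to XE_g$ in distribution by the continuous mapping theorem, and uniform integrability converts this into convergence of expectations, i.e. $(S^nX^n)(\check g^n)\to(SX)(g)$; the same reasoning applied to $((X^n)^2)$ gives $\ex[(X^n)^2]\to\ex[X^2]$. This establishes (ii) and hence, by the previous paragraph, (i), closing the cycle.
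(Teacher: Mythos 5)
Your overall architecture is exactly that of the paper: everything is reduced to the strong $L^2$-convergence of the discrete Wick exponentials (the paper's Proposition \ref{proposition_Wiener_and_Wick_exp}); the equivalence (i)$\Leftrightarrow$(ii) is then Hilbert-space bookkeeping (the paper packages it as Lemma \ref{lem:weakconvergence}, you do it inline with a subsequence extraction, which is equivalent); and your treatment of (iii) --- continuous mapping plus uniform integrability of $X^n\exp^{\diamond_n}(I^n(\check g^n))$, obtained from uniform integrability of $((X^n)^2)$ and of the squared exponentials --- coincides with the paper's argument essentially word for word. Your upgrade ``convergence in probability plus convergence of second moments implies strong $L^2$-convergence'' is a correct (Riesz--Scheff\'e type) variant of the paper's route, which instead expands $\ex[(\exp^{\diamond}(I(g))-\exp^{\diamond_n}(I^n(\check g^n)))^2]$ and uses uniform integrability of the cross term.

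The genuine gap is in the convergence-in-probability half of your key lemma. First, you are not entitled to ``realize all objects on one probability space through the Skorokhod embedding'': the theorem's hypothesis is only \eqref{eq_Donsker_embedding}, i.e., the \emph{given} noise $(\xi^n_i)$ and the \emph{given} $B$ sit on a common space with pointwise convergence in probability. The given coupling need not be of Skorokhod-embedding type, and every assertion in play (strong/weak $L^2(\Omega,\F,P)$-convergence of $X^n$ to $X$, and of $\exp^{\diamond_n}(I^n(\check g^n))$ to $\exp^{\diamond}(I(g))$) depends on the joint law of $((\xi^n_i)_i,B)$; proving the lemma for a different coupling proves nothing about the given one. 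Second, even granting locally uniform convergence (which can in fact be deduced from \eqref{eq_Donsker_embedding} together with Donsker tightness, but this itself needs an argument), the claim that convergence of the Euler/Dol\'eans--Dade products ``can be read off from the convergence of the driving random walk'' hides the real technical content of the lemma: with only two moments on $\xi$, one must truncate the large jumps, Taylor-expand the logarithm of the product over the small-jump factors, and invoke the weak law of large numbers for $(\xi^n_i)^2$ to produce the compensator $\frac12\int_0^\infty g^2(s)\,ds$. This is precisely what the paper's proof of Proposition \ref{proposition_Wiener_and_Wick_exp} does, using only \eqref{eq_Donsker_embedding}; alternatively one could cite Kurtz--Protter/Avram-type limit theorems after verifying a uniform-tightness (goodness) condition for $(B^n)$, but that verification is likewise absent from your sketch. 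Filling in either of these arguments would make the proposal complete.
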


 \begin{remark}\label{rem:S_convergence}
  Note, that $X\in L^2(\Omega,\mathcal{F},P)$ is, of course, not determined by its univariate distribution, but it is uniquely 
  determined by all the bivariate distributions of $(X, e^{\diamond I(g)})$,  $g\in \mathcal{E}$, in view of the 
  injectivity of the $S$-transform. This observation motivates that 
  the characterization of strong $L^2(\Omega,\mathcal{F},P)$-convergence via convergence in distribution in item (iii) of Theorem \ref{thm_S_transform_convergence_1}
  can hold.
 \end{remark}

In view of Lemma \ref{lem:weakconvergence} below, the proof of Theorem \ref{thm_S_transform_convergence_1} can be reduced to the following strong $L^2$-convergence result for 
(discrete) Wick exponentials.

\begin{proposition}\label{proposition_Wiener_and_Wick_exp}
Suppose $g \in \mathcal{E}$. Then,  we have strongly in $L^2(\Omega,\F,P)$, as $n$ tends to infinity:
\[
\exp^{\diamond_n}(I^n(\check{g}^n)) \rightarrow \exp^{\diamond}(I(g)). 
\]
\end{proposition}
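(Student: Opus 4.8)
The plan is to reduce the claimed strong $L^2$-convergence to the combination of convergence in probability and convergence of second moments, via the elementary implication that $Y_n\to Y$ in probability together with $\ex[Y_n^2]\to\ex[Y^2]<\infty$ forces $\ex[(Y_n-Y)^2]\to0$; this follows by applying Fatou's lemma to the nonnegative random variables $(Y_n+Y)^2=2Y_n^2+2Y^2-(Y_n-Y)^2$ along almost surely convergent subsequences. Here and below I abbreviate $Y_n:=\exp^{\diamond_n}(I^n(\check g^n))$ and $Y:=\exp^{\diamond}(I(g))$, and I work on the coupled probability space furnished by the Skorokhod embedding \eqref{eq:Skorokhod_embedding}, on which $B^n_t\to B_t$ in probability for every $t\ge0$ and $\xi^n_i=\sqrt n\,(B^n_{i/n}-B^n_{(i-1)/n})$. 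Writing $g=\sum_{j=1}^m a_j\eins_{(b_j,c_j]}$ (supported, without loss of generality, in $[0,\infty)$), note that $g$ is bounded with bounded support, so only $O(n)$ of the discrete values $g(i/n)$ are nonzero.

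I would first prove $Y_n\to Y$ in probability. A direct index count gives the exact identity $I^n(\check g^n)=\sum_{j=1}^m a_j\,(B^n_{c_j}-B^n_{b_j})$, which converges in probability to $I(g)=\sum_{j}a_j\,(B_{c_j}-B_{b_j})$. To treat the product itself, set $x_i:=\tfrac1{\sqrt n}g(i/n)\,\xi^n_i$ and pass to logarithms. The maximal-term estimate $\max_i|\xi^n_i|=o_P(\sqrt n)$, which holds because $\ex[\xi^2]<\infty$ implies $n\,P(|\xi|>\varepsilon\sqrt n)\to0$, yields $\max_i|x_i|\to0$ in probability; hence, with probability tending to one, every factor $1+x_i$ lies in $[\tfrac12,\tfrac32]$. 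On this event the expansion $\log(1+x_i)=x_i-\tfrac12 x_i^2+O(|x_i|^3)$ is legitimate, so that $\log Y_n=I^n(\check g^n)-\tfrac12\sum_i x_i^2+R_n$ with $|R_n|\le C\,\max_i|x_i|\sum_i x_i^2$. A weak law of large numbers for the i.i.d. array $((\xi^n_i)^2)_i$ gives $\sum_i x_i^2=\tfrac1n\sum_i g(i/n)^2(\xi^n_i)^2\to\int_0^\infty g^2(s)\,ds=\|g\|_{L^2}^2$ in probability, whence $R_n\to0$ and $\log Y_n\to I(g)-\tfrac12\|g\|_{L^2}^2$ in probability. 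Continuity of $\exp$ then gives $Y_n\to Y$ in probability.

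It remains to show $\ex[Y_n^2]\to\ex[Y^2]$. On the continuous side, the lognormal identity gives $\ex[Y^2]=\exp(\|g\|_{L^2}^2)$. For the discrete side I would sandwich: the bound \eqref{eq:exponential_L2} together with the Riemann-sum convergence $\|\check g^n\|_{L^2_n(\N)}^2=\tfrac1n\sum_i g(i/n)^2\to\|g\|_{L^2}^2$ yields $\limsup_n\ex[Y_n^2]\le\exp(\|g\|_{L^2}^2)$, while Fatou's lemma applied along an almost surely convergent subsequence of $Y_n$ gives $\liminf_n\ex[Y_n^2]\ge\ex[Y^2]=\exp(\|g\|_{L^2}^2)$. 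Hence $\ex[Y_n^2]\to\ex[Y^2]$, and the Riesz-type implication recalled at the outset finishes the proof. (Alternatively, independence yields the exact formula $\ex[Y_n^2]=\prod_i(1+\tfrac1n g(i/n)^2)\to\exp(\|g\|_{L^2}^2)$.)

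The main obstacle is the passage to logarithms in the product defining $Y_n$: in contrast to the continuous Wick exponential, the discrete factors $1+x_i$ need not be positive, and controlling the cubic remainder requires the uniform smallness $\max_i|x_i|=o_P(1)$ captured by the maximal estimate, which rests only on the bare $L^2$-moment assumption on the noise. Once this smallness and the law-of-large-numbers control of $\sum_i x_i^2$ are secured, the remaining steps are routine. I stress that the argument deliberately avoids evaluating the mixed moment $\ex[Y_nY]$, whose computation would couple the discrete noise to the Brownian motion through the Skorokhod times $\tau^n_i$ and is precisely what this convergence-in-probability route circumvents.
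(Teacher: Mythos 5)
Your proof is correct and follows essentially the same route as the paper's: both reduce the strong $L^2$-convergence to the two ingredients (a) convergence in probability, obtained by passing to logarithms and controlling the quadratic term by the law of large numbers and the cubic remainder by the maximal estimate $\sup_i |\xi^n_i|/\sqrt{n}\rightarrow 0$ in probability, and (b) convergence of second moments, with only cosmetic differences (you localize on the high-probability event $\{\max_i|x_i|\le 1/2\}$ where the paper truncates the noise into small and large parts, and you conclude via Fatou's lemma where the paper uses uniform integrability of the cross term). One minor remark: invoking the Skorokhod embedding is superfluous, since your argument only uses $B^n_t\rightarrow B_t$ in probability, which is exactly the standing assumption \eqref{eq_Donsker_embedding} on the given space, so the proof holds in the paper's general setting as written.
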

These type of convergence results for stochastic exponentials are somewhat standard and can be obtained in a much more
general context by applying weak convergence results for stochastic differential equations, see, e.g., \cite{Av, KP} and the references
therein.
For sake of completeness, we here provide an elementary proof.

\begin{proof}
Let
\[
g = \sum\limits_{j=1}^{m} a_j \eins_{(b_j,c_j]} \in \mathcal{E}.
\]
We denote by $C,N$ constants in $\N$ such that $g$ is bounded by $C$ and has support in $[0,N]$.
Decomposing
\begin{eqnarray*}
&& \ex\left[\left(\exp^{\diamond}(I(g)) -\exp^{\diamond_n}(I^n(\check{g}^n))\right)^2\right]
 \\ &=&  \ex\left[\left(\exp^{\diamond}(I(g))\right)^2\right]- 2\ex\left[\exp^{\diamond_n}(I^n(\check{g}^n))\exp^{\diamond}(I(g))  \right] 
  + \ex\left[\left(\exp^{\diamond_n}(I^n(\check{g}^n)) \right)^2\right],
\end{eqnarray*}
it suffices to show
\begin{enumerate}
 \item[(i)] $\lim_{n\rightarrow \infty} \ex\left[\left(\exp^{\diamond_n}(I^n(\check{g}^n)) \right)^2\right]= \ex\left[\left(\exp^{\diamond}(I(g))\right)^2\right]$,
 \item[(ii)] $\exp^{\diamond_n}(I^n(\check{g}^n)) \rightarrow \exp^{\diamond}(I(g))$ in probability,
\end{enumerate}
because under (i) the integrand in the second term on the right-hand side is uniformly integrable.
\\[0.1cm]
(i) 
Due to $p < \lceil q\rceil \leq r \Leftrightarrow \lfloor p \rfloor <q \leq \lfloor r \rfloor$ for all $p,q,r \in \R$, we obtain for every $t\in(0,\infty)$,
\begin{eqnarray}\label{eq:checkg}
 \check{g}^n(\lceil nt\rceil)&=&\sum\limits_{j=1}^{m} a_j \eins_{(\lfloor b_jn\rfloor/n,\lfloor c_jn\rfloor/n]}(t).
\end{eqnarray}
Hence,
\begin{equation}\label{eq:hilf0001}
\|g- \check{g}^n(\lceil n \cdot\rceil)\|_{L^2([0,\infty))}\leq \sqrt{2} \left(\sum\limits_{j=1}^{m} |a_j| \right) \frac{1}{\sqrt{n}}\rightarrow 0,
\end{equation}
and in particular,
$$
\sum_{i=1}^{Nn} \left(\check g^n(i)\right)^2\frac{1}{n}=\| \check{g}^n(\lceil n \cdot\rceil)\|^2_{L^2([0,\infty))}
\rightarrow  \int_0^\infty g(s)^2 ds.
$$
Thus, by the independence of the centered random variables $(\xi^n_i)_{i\in \N}$ with unit variance and taking the boundedness 
of $g$ into account, we get
\begin{eqnarray*}
 \ex\left[(\exp^{\diamond_n}(I^n(\check{g}^n)))^2\right] &=& \prod_{i=1}^{Nn} 
 \ex\left[(1+\frac{1}{\sqrt{n}} \check{g}^n(i) \xi^n_i)^2\right]
 =\prod_{i=1}^{Nn} \left(1+ \frac{1}{n} (\check{g}^n(i))^2 \right)\nonumber\\ 
&\rightarrow& \exp\left(\int_0^\infty g(s)^2 ds \right)=\ex\left[\left(\exp^{\diamond}(I(g))\right)^2\right].
\end{eqnarray*}
(ii) In order to treat the large jumps of $B^n$ and the small ones separately, we consider
$$
\xi^{n,1}_i:=\xi^n_i\eins_{\{|\xi^n_i|\leq \frac{\sqrt{n}}{2C} \}},\quad \xi^{n,2}_i:=\xi^n_i\eins_{\{|\xi^n_i|> \frac{\sqrt{n}}{2C} \}},
$$
cp. also \cite{Sottinen}. Then,
$$
\exp^{\diamond_n}(I^n(\check{g}^n)) =\prod\limits_{i=1}^{Nn}\left(1+ \frac{1}{\sqrt{n}} \check g^n(i)\xi_{i}^{n,1}\right)
\prod\limits_{i=1}^{Nn}\left(1+ \frac{1}{\sqrt{n}} \check g^n(i)\xi_{i}^{n,2}\right)=: E^{n,1}\cdot E^{n,2} 
$$
We note that, for every $\epsilon>0$, by the independence of $(\xi^n_i)_{i \in \N}$,
\begin{equation}\label{eq:sup_probab}
 P\left(\left\{\sup_{i=1,\ldots, Nn} \frac{|\xi^n_i|}{\sqrt{n}}>\epsilon\right\}\right)=1-\left(1- \frac{P(\{|\xi|>\epsilon \sqrt{n}\})Nn}{Nn} \right)^{Nn}\rightarrow 0,
\end{equation}
because, by square-integrability of $\xi$, $P(\{|\xi|>\epsilon \sqrt{n}\})n\rightarrow 0$, see, e.g., \cite[p. 208]{Sh}.

Hence, for every $\epsilon>0$,
$$
P(\{|E^{n,2}-1|\geq \epsilon\})\leq P(\{\sup_{i=1,\ldots, Nn} |\xi^{n,2}_i|>0\}) = P\left(\left\{\sup_{i=1,\ldots, Nn} \frac{|\xi^n_i|}{\sqrt{n}}>1/(2C) \right\}\right) \rightarrow 0,
$$
i.e., $(E^{n,2})_{n\in \N}$ converges to 1 in probability.

By construction, each factor in $E^{n,1}$ is larger than $1/2$. Applying a Taylor expansion to the logarithm, thus, yields

$$
\log  E^{n,1} = \sum_{i=1}^{Nn} \check g^n(i) \frac{\xi^{n,1}_i}{\sqrt{n}}-\frac{1}{2}\sum_{i=1}^{Nn} \left(\check g^n(i)\right)^2\frac{(\xi^{n,1}_i)^2}{n} + R_n
$$
with a remainder term satisfying
$$
|R_n|\leq \frac{8C}{3} \left( \sup_{j=1,\ldots, Nn} \frac{|\xi^n_j|}{\sqrt{n}}\right) \sum_{i=1}^{Nn} \left(\check g^n(i)\right)^2\frac{(\xi^{n,1}_i)^2}{n}.
$$
It, thus, suffices to show
\begin{enumerate}
 \item[(iii)] $\sum_{i=1}^{Nn} \check g^n(i) \frac{\xi^{n,1}_i}{\sqrt{n}}\rightarrow I(g)$ in probability,
 \item[(iv)] $\sum_{i=1}^{Nn} \left(\check g^n(i)\right)^2\frac{(\xi^{n,1}_i)^2}{n}\rightarrow \int_0^\infty g(s)^2 ds$ in probability.
\end{enumerate}
Indeed, by (\ref{eq:sup_probab}), the remainder term then vanishes in probability as $n$ tends to infinity, and, thus,
$$
E^{n,1}\rightarrow \exp\left( I(g)- \frac{1}{2}\int_0^\infty g(s)^2 ds \right) \textnormal{ in probability.}
$$
The same argument, which was applied for the convergence of $E^{n,2}$, shows that we can (and shall) replace $\xi^{n,1}_i$ by $\xi^n_i$ in (iii)
and (iv). However, by (\ref{eq_Donsker_embedding}) and (\ref{eq:checkg}),
$$
\lim_{n\rightarrow \infty} \sum_{i=1}^{Nn} \check g^n(i) \frac{\xi^{n}_i}{\sqrt{n}}=\lim_{n\rightarrow \infty} \sum\limits_{j=1}^{m} a_j\left(B^n_{c_j}-B^n_{b_j}\right)= 
 \sum\limits_{j=1}^{m} a_j\left(B_{c_j}-B_{b_j}\right)=I(g), \textnormal{ in probability.}
$$
Finally, by the law of large numbers, $\frac{1}{n} \sum_{i=1}^{\lfloor nt \rfloor} (\xi^{n}_i)^2$ converges to $t$ in probability 
for every $t\geq 0$, and, hence, by  (\ref{eq:checkg}),
$$
\lim_{n\rightarrow \infty} \sum_{i=1}^{Nn} \left(\check g^n(i)\right)^2\frac{(\xi^{n}_i)^2}{n}=\sum\limits_{j=1}^{m} a_j^2\left({c_j}-{b_j}\right)=
\int_0^\infty g(s)^2 ds, \textnormal{ in probability.}
$$
\end{proof}

The following simple lemma from functional analysis turns out to be useful. 

\begin{lemma}\label{lem:weakconvergence}
 Suppose $H$ is a Hilbert space, $A$ is an arbitrary index set, $\{x^a,\;a \in A\}$ is total in $H$, and, for every $a\in A$,
 $(x^a_n)_{n\in \N}$ is a sequence in  $H$ which converges strongly in $H$ to $x^a$. Then, the following are equivalent, as $n$ tends to infinity:
 \begin{enumerate}
  \item[(i)] $x^n \rightarrow x$  strongly (resp. weakly) in $H$.
    \item[(ii)]
  $\langle x_n, x^a_n\rangle_H \rightarrow \langle x, x^a\rangle_H$ for every  $a \in A$,  and additionally  $\|x_n\|_H \rightarrow \|x\|_H$ in the case of strong convergence 
  (resp. $\sup_{n\in \N} \|x_n\|_H<\infty$ in the case of weak convergence).
 \end{enumerate}
\end{lemma}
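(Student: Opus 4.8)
The plan is to prove both equivalences (weak and strong) in parallel, exploiting the single structural feature shared by all four implications: under either hypothesis one has access to a uniform norm bound $\sup_{n} \|x_n\|_H < \infty$. The entire argument then reduces to testing against the \emph{fixed} total family $\{x^a : a \in A\}$ rather than against the moving targets $x^a_n$, after which weak convergence follows by a density argument and strong convergence by the standard ``weak plus norm'' upgrade.

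The first thing I would record is a reduction step. Whenever $M := \sup_n \|x_n\|_H < \infty$, Cauchy--Schwarz gives
\[
|\langle x_n, x^a_n\rangle_H - \langle x_n, x^a\rangle_H| \le \|x_n\|_H\,\|x^a_n - x^a\|_H \le M\,\|x^a_n - x^a\|_H \longrightarrow 0,
\]
since $x^a_n \to x^a$ strongly by assumption. Hence, in the presence of a uniform bound, $\langle x_n, x^a_n\rangle_H \to \langle x, x^a\rangle_H$ holds if and only if $\langle x_n, x^a\rangle_H \to \langle x, x^a\rangle_H$; this lets me forget the approximating sequences $x^a_n$ and work only with the fixed elements $x^a$.

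For the weak case, the implication (i) $\Rightarrow$ (ii) is quick: weak convergence of $(x_n)$ forces $\sup_n\|x_n\|_H<\infty$ by the uniform boundedness principle, and then the reduction step combined with the defining property of weak convergence (tested on $x^a$) yields (ii). For (ii) $\Rightarrow$ (i), the hypothesis already supplies the bound $M<\infty$, so the reduction gives $\langle x_n, x^a\rangle_H \to \langle x, x^a\rangle_H$ for each $a$, hence $\langle x_n, y\rangle_H \to \langle x, y\rangle_H$ for every $y$ in the linear span of $\{x^a\}$ by bilinearity. To reach an arbitrary $y\in H$ I would use totality: given $\varepsilon>0$ choose $z$ in the span with $\|y-z\|_H<\varepsilon$ and estimate
\[
|\langle x_n - x, y\rangle_H| \le |\langle x_n - x, z\rangle_H| + (\|x_n\|_H + \|x\|_H)\,\varepsilon,
\]
so that $\limsup_n |\langle x_n - x, y\rangle_H| \le (M + \|x\|_H)\varepsilon$, and letting $\varepsilon \downarrow 0$ delivers weak convergence. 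The strong case builds on this: (i) $\Rightarrow$ (ii) is immediate because strong convergence implies weak convergence (whence the inner products converge by the weak case) and $\|x_n\|_H\to\|x\|_H$ by continuity of the norm, while for (ii) $\Rightarrow$ (i) the condition $\|x_n\|_H\to\|x\|_H$ supplies the uniform bound, so the weak part already proven gives $x_n\to x$ weakly, and I then invoke the Hilbert-space identity
\[
\|x_n - x\|_H^2 = \|x_n\|_H^2 - 2\,\mathrm{Re}\,\langle x_n, x\rangle_H + \|x\|_H^2,
\]
where $\|x_n\|_H^2 \to \|x\|_H^2$ by hypothesis and $\langle x_n, x\rangle_H \to \|x\|_H^2$ by the weak convergence just obtained, forcing the right-hand side to $0$.

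All the computations here are elementary; the one genuinely substantive point is the passage from the total family to all of $H$ in the weak (ii) $\Rightarrow$ (i) step. The uniform bound $\sup_n\|x_n\|_H<\infty$ is exactly what keeps the approximation error $(\|x_n\|_H+\|x\|_H)\varepsilon$ from exploding, and this is precisely why the norm condition (rather than merely the inner-product convergence) is built into hypothesis (ii). I expect this density/uniform-boundedness interplay to be the only place requiring care, with everything else reducing to Cauchy--Schwarz and the quadratic expansion above.
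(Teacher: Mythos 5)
Your proof is correct and takes essentially the same route as the paper: the identical Cauchy--Schwarz reduction $|\langle x_n, x^a_n\rangle_H - \langle x_n, x^a\rangle_H| \le \sup_m\|x_m\|_H\,\|x^a_n - x^a\|_H \to 0$ under the uniform norm bound, followed by testing against the fixed total family and upgrading weak to strong convergence via convergence of norms. The only difference is cosmetic: where the paper cites Yosida (for ``bounded plus convergence on a total set implies weak convergence'' and for ``weak convergence plus norm convergence implies strong convergence''), you prove these two standard facts inline with the $\varepsilon$-density argument and the quadratic expansion of $\|x_n - x\|_H^2$.
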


\begin{proof}
 Firstly, we observe that $\sup_{n\in \N} \|x_n\|_H$ is finite, either by weak convergence \cite[Theorem V.1.1]{Yosida} in (i) or by assumption (ii). Thus, for every $a\in A$, by the strong convergence of $(x^a_n)$ to $x^a$,
 \begin{equation}\label{eq:hilf0018}
 \left|\langle x_n, x^a_n\rangle_H- \langle x_n, x^a\rangle_H\right| =|\langle x_n, x^a_n-x^a\rangle_H|\leq \sup_{m\in \N} \|x_m\|_H \|x^a_n-x^a\|_H \rightarrow 0.
 \end{equation}
Let us treat the case of weak convergence: If (i) holds, the term $\langle x_n, x^a\rangle_H$ in (\ref{eq:hilf0018}) 
converges to $\langle x, x^a\rangle_H$, and then so does $\langle x_n, x^a_n\rangle_H$, which implies (ii). Conversely, if (ii) holds,  
the first term $\langle x_n, x^a_n\rangle_H$ in (\ref{eq:hilf0018}) tends to $\langle x, x^a\rangle_H$,
and then so does $\langle x_n, x^a\rangle_H$, which yields (i) in view of \cite[Theorem V.1.3]{Yosida}. 
The case of strong convergence is an immediate consequence, as, in a Hilbert space, strong convergence is equivalent 
to weak convergence and convergence of the norms  \cite[Theorem V.1.8]{Yosida}.
\end{proof}

We are now in the position to prove Theorem \ref{thm_S_transform_convergence_1}.

\begin{proof}[Proof of Theorem \ref{thm_S_transform_convergence_1}]
`$(i) \Leftrightarrow (ii)$': Proposition  \ref{proposition_Wiener_and_Wick_exp} 
and Lemma \ref{lem:weakconvergence} apply immediately in view of the definition of the (discrete) $S$-transform, and 
as the set of Wick exponentials of step functions $\{\exp^{\diamond}(I(g)),\; g\in \mathcal{E}\}$ is total in $L^2(\Omega,\mathcal{F},P)$.\\
`$(i)$ with strong convergence $\Rightarrow (iii)$': This is is a direct consequence
of Proposition \ref{proposition_Wiener_and_Wick_exp} and the assumed strong $L^2(\Omega,\mathcal{F},P)$-convergence
of $(X_n)$. \\ 
`$(iii) \Rightarrow (ii)$ with strong convergence': 
 By (iii) and the continuous mapping theorem, 
the sequence  
$(X^n \exp^{\diamond_n}(I^n(\check{g}^n)))$ converges in distribution to $X \exp^{\diamond}(I({g}))$. 
 Moreover, this sequence is uniformly integrable, because so are the sequences
 $(|X^n|^2)$ by assumption and $( |\exp^{\diamond_n}(I^n(\check{g}^n))|^2)$ by
Proposition \ref{proposition_Wiener_and_Wick_exp}. Hence,
$$
(S^n X^n)(\check{g}^n) = \ex[X^n \exp^{\diamond_n}(I^n(\check{g}^n)]\rightarrow \ex[X \exp^{\diamond}I(g)]=(SX)(g).
$$
Moreover, thanks to the uniform integrability of  $((X^n)^2)$ and the convergence in distribution 
$X^n \stackrel{d}{\rightarrow} X$, we have $\ex[(X^n)^2]  \rightarrow \ex[X^2]$. This completes the proof of $(ii)$ with strong convergence.
\end{proof}

We close this section with an example.

\begin{example}\label{ex:conditional}
 (i) In this example, we provide a simple proof, that, for every $X\in L^2(\Omega,\mathcal{F},P)$, $X^n:=\ex[X|\mathcal{F}^n]$
 converges to $X$ strongly in 
 $L^2(\Omega,\mathcal{F},P)$. Indeed, by Proposition \ref{proposition_Wiener_and_Wick_exp}, for every $g\in \mathcal{E}$,
 $$
 (S^n X^n)(\check{g}^n) =\ex\left[\ex[X|\mathcal{F}^n] \exp^{\diamond_n}(I^n(\check{g}^n))\right]=\ex\left[X \exp^{\diamond_n}(I^n(\check{g}^n))\right] \rightarrow \ex\left[X \exp^{\diamond}(I(g))\right]=(S X)(g).
 $$
 As  $\ex[(X^n)^2]\leq \ex[X^2]$, Theorem \ref{thm_S_transform_convergence_1}  implies weak $L^2(\Omega,\mathcal{F},P)$-convergence of
 $(X^n)$ to $X$. The same theorem  finally yields strong  $L^2(\Omega,\mathcal{F},P)$-convergence,
 since, by the already established weak convergence,
 $$
 \ex[(X^n)^2]= \ex\left[\ex\left[\left.X^n \right|\mathcal{F}^n \right] X \right]  = \ex[X^nX]\rightarrow \ex[X^2].
 $$
 We note that this result can alternatively be derived by the uniform integrability of $((X^n)^2)$ via the concept 
 of convergence of filtrations making use of \cite[Proposition 2]{CMS}.
 \\[0.1cm] (ii) Denote by $(\mathcal{F}_t)_{t\geq 0}$ the augmented Brownian filtration and let $\mathcal{F}^n_i=\sigma(\xi^n_1,\ldots, \xi^n_i)$.
 We assume $X \in L^2(\Omega,\F_T,P)$. 
Then, one can always approximate $X$ by a sequence $(X^n_T)$ strongly in $L^2(\Omega,\mathcal{F},P)$, where $X^n_T$ is  measurable with respect
to $\F^{n}_{{\lfloor nT\rfloor}}$. Indeed, take any sequence $(X^n)$ of $\mathcal{F}^n$-measurable
random variables which converges strongly in $L^2(\Omega,\mathcal{F},P)$ to $X$, and define $X^n_T=\ex[X^n|\mathcal{F}^n_{{\lfloor nT\rfloor}}]$. Then, for every $g\in \mathcal{E}$,
by Proposition \ref{proposition_Wiener_and_Wick_exp},
\begin{eqnarray*}
 (S^nX^n_T)(\check g^n)&=& 
 \ex\left[X^n \prod\limits_{i=1}^{\lfloor nT \rfloor}\left(1+ \frac{1}{\sqrt{n}} g(i/n)\xi_{i}^{n}\right) \right]
 =
 \ex\left[X^n \exp^{\diamond_n}(I^n(\check{(g\eins_{(0,T]})}^n))\right]\\ &\rightarrow&    
 \ex\left[X \exp^\diamond(I(g\eins_{(0,T]})) \right]= \ex\left[X \ex[\exp^\diamond(I(g))|\F_T] \right]=(SX)\left(g\right).
\end{eqnarray*}
Moreover, 
$$
\sup_{n\in \N} \ex[(X^n_T)^2]\leq \sup_{n\in \N} \ex[(X^n)^2]<\infty.
$$
Hence, $(X^n_T)$ converges weakly in $L^2(\Omega,\mathcal{F},P)$ to $X$ by Theorem \ref{thm_S_transform_convergence_1}. Then, strong $L^2(\Omega,\mathcal{F},P)$-convergence 
follows by Theorem \ref{thm_S_transform_convergence_1} as well, because
$$
\ex[(X^n_T)^2]=\ex[X^n_TX]+\ex[X^n_T(X^n-X)]\rightarrow \ex[X^2].
$$

\end{example}

\section{Weak $L^2$-approximation of the Skorokhod integral and the Malliavin derivative}\label{section_Malliavin_weak}

In this section, we first discuss weak $L^2$-approximations of the Skorokhod integral and the Malliavin derivative
via appropriate discrete-time counterparts. We then show how to lift these results from weak convergence to strong convergence 
via duality under appropriate conditions which can be formulated in terms of the discrete-time approximations.

While most presentations of Malliavin calculus first introduce the Malliavin derivative and then define the Skorokhod integral 
as adjoint operator of the Malliavin derivative, we shall here employ the following equivalent characterization of the Skorokhod integral 
in terms of the $S$-transform, cp. \cite[Theorem 16.46, Theorem 16.50]{Janson}.

\begin{definition}\label{def:Skorokhod}
 $Z\in L^2(\Omega\times [0,\infty)):= L^2(\Omega \times [0,\infty), \F \otimes \mathcal{B}([0,\infty)), P \otimes \lambda_{[0,\infty)})$ is said to belong to the domain $D(\delta)$
 of the  Skorokhod integral, if there is an $X\in L^2(\Omega,\F,P)$ such that for every $g\in \mathcal{E}$
 $$
(SX)(g)=\int_0^\infty (SZ_t)(g)g(t)dt.
 $$
 In this case, $X$ is uniquely determined and $\delta(Z):=X$ is called the \emph{Skorokhod integral of $Z$}.
\end{definition}

For the discrete-time approximation we first introduce the space 
\begin{align*}
L^2_{n}(\Omega \times \N) &:= \left\{Z^n:\N\rightarrow L^2(\Omega,\F^n,P),\; \|Z^n\|^2_{L^2_{n}(\Omega \times \N)}:= \frac{1}{n} \sum_{i=1}^\infty \ex[(Z^n_i)^2]<\infty\right\}.
\end{align*}
Moreover, we recall the definitions
$$
\F^n := \sigma(\xi^n_j, j \in \N), \qquad \F^n_{M} := \sigma(\xi^n_1, \ldots, \xi_M),
$$
and introduce the shorthand notations
$$
\F^n_{-i} := \sigma(\xi^n_j, j \in \N\setminus \{i\}), \qquad \F^n_{M,-i} := \sigma(\xi^n_j, j \in \{1,\ldots, M\}\setminus \{i\}).
$$

\begin{definition}
 We say, $Z^n \in L^2_{n}(\Omega \times \N)$ belongs to the domain $D(\delta^n)$ of the discrete Skorokhod integral, if 
 \begin{equation}\label{eq:discrete_Skorokhod}
 \delta^n(Z^n):=\lim_{M \rightarrow \infty } \sum_{i=1}^M \ex[Z^n_i|\F^n_{M,-i}] \frac{\xi^n_i}{\sqrt{n}}.
\end{equation}
exists strongly in $L^2(\Omega,\F,P)$. If this is the case, $\delta^n(Z^n)$ is called the \emph{discrete Skorokhod integral} 
of $Z^n$.
\end{definition}
We note that, by the independence of $\ex[Z^n_i|\F^n_{M,-i}]$ and $\xi^n_i$, each summand 
on the right-hand side of (\ref{eq:discrete_Skorokhod}) is indeed a member of $L^2(\Omega,\F,P)$. Moreover, the martingale convergence 
theorem implies that, for every  $Z^n \in L^2_{n}(\Omega \times \N)$ and $N\in\N$, $Z^n \eins_{[1,N]} \in D(\delta^{n})$
and
\begin{equation}\label{eq:discrete_Skorokhod_finite}
\delta^n(Z^n \eins_{[1,N]})=\sum_{i=1}^N \ex[Z^n_i|\F^n_{-i}] \frac{\xi^n_i}{\sqrt{n}}.
\end{equation}
Hence, the discrete Skorokhod integral is densely defined from $L^2_n(\Omega\times \N)$ to $L^2(\Omega,\F,P)$. 
We will show in Proposition \ref{prop:duality_discrete} below that it is a closed operator.

\begin{remark}
 This definition of the discrete Skorokhod integral closely resembles the following Riemann-sum approximation of the 
 Skorokhod integral by \cite{NualartPardoux}, who show that under appropriate conditions on $Z$,
 $$
 \delta(Z\eins_{[0,1]})=\lim_{n\rightarrow \infty} \sum_{i=0}^{n-1} \ex\left[\left.n\int_{i/n}^{(i+1)/n} Z_s ds \right|(B_s,B_1-B_{r})_{0\leq s\leq i/n\leq (i+1)/n\leq r\leq 1} \right] \left(B_{(i+1)/n}-B_{i/n}\right)
 $$
 strongly in $L^2(\Omega,\F,P)$.
\end{remark}

As a first main result of this section we are going to show the following weak approximation theorem for Skorokhod integrals.
\begin{theorem}\label{thm:Skorokhod_convergence_1}
Suppose $Z^n\in D(\delta^n)$ for every $n\in \N$, and $(Z^n_{\lceil n\cdot\rceil})_{n\in\N}$ converges to $Z$ weakly in $L^2(\Omega\times [0,\infty))$.
Then, the following assertions are equivalent:
\begin{itemize}
 \item[(i)]  $\sup_{n\in \N} \ex[|\delta^n(Z^n)|^2 ]<\infty$.
 \item[(ii)] $Z\in D(\delta)$ and  $(\delta^n(Z^n))$ converges  to $\delta(Z)$ weakly in $L^2(\Omega,\F,P)$ as $n$ tends to infinity.
 \end{itemize}
\end{theorem}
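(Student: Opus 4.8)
The plan is to read both conditions through the discrete $S$-transform and apply Theorem \ref{thm_S_transform_convergence_1}. The implication $(ii)\Rightarrow(i)$ is immediate: if $Z\in D(\delta)$ and $(\delta^n(Z^n))$ converges to $\delta(Z)$ weakly in $L^2(\Omega,\F,P)$, then the equivalence $(i)\Leftrightarrow(ii)$ of Theorem \ref{thm_S_transform_convergence_1} in its weak form forces $\sup_n\ex[|\delta^n(Z^n)|^2]<\infty$. The real content is $(i)\Rightarrow(ii)$, and the engine is an explicit formula for $(S^n\delta^n(Z^n))(\check{g}^n)$.

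First I would compute the discrete $S$-transform of the discrete Skorokhod integral. Fix $g\in\mathcal{E}$ with support in $[0,N]$ and bound $C$, so that $\exp^{\diamond_n}(I^n(\check{g}^n))=\prod_{j=1}^{Nn}(1+\tfrac{1}{\sqrt{n}}\check{g}^n(j)\xi^n_j)$. Writing $E^n_{-i}:=\prod_{j\le Nn,\,j\ne i}(1+\tfrac{1}{\sqrt{n}}\check{g}^n(j)\xi^n_j)$, which is $\F^n_{-i}$-measurable, I would pass the $L^2$-limit defining $\delta^n(Z^n)$ in (\ref{eq:discrete_Skorokhod}) through the inner product with $\exp^{\diamond_n}(I^n(\check{g}^n))$ and condition on $\F^n_{M,-i}$ for $M\ge Nn$. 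Using $\ex[\xi^n_i]=0$, $\ex[(\xi^n_i)^2]=1$, and the independence of $\xi^n_i$ from $\F^n_{-i}$, each summand with $i\le Nn$ collapses to $\tfrac{\check{g}^n(i)}{n}\ex[Z^n_i E^n_{-i}]$, while every summand with $i>Nn$ vanishes. This yields
\[
(S^n\delta^n(Z^n))(\check{g}^n)=\sum_{i=1}^{Nn}\frac{\check{g}^n(i)}{n}\,\ex[Z^n_i E^n_{-i}],
\]
the discrete analogue of the defining relation in Definition \ref{def:Skorokhod}.

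Next I would show that this quantity converges, for every $g\in\mathcal{E}$, to $\int_0^\infty(SZ_t)(g)g(t)dt$. Replacing $E^n_{-i}$ by the full Wick exponential costs the factor $(1+\tfrac{1}{\sqrt{n}}\check{g}^n(i)\xi^n_i)$, so I would split the sum as $A_n-R_n$ with $A_n:=\tfrac{1}{n}\sum_{i=1}^{Nn}\check{g}^n(i)(S^n Z^n_i)(\check{g}^n)$. Rewriting $A_n=\langle Z^n_{\lceil n\cdot\rceil},W^n\rangle_{L^2(\Omega\times[0,\infty))}$ with $W^n_t:=\exp^{\diamond_n}(I^n(\check{g}^n))\,\check{g}^n(\lceil nt\rceil)$, I would note that $W^n\to W$ strongly in $L^2(\Omega\times[0,\infty))$, where $W_t:=\exp^{\diamond}(I(g))g(t)$; this follows by expanding the squared norm and using Proposition \ref{proposition_Wiener_and_Wick_exp} on the $\Omega$-factor together with (\ref{eq:hilf0001}) on the time factor. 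Combined with the assumed weak convergence $Z^n_{\lceil n\cdot\rceil}\to Z$, the weak-strong pairing gives $A_n\to\langle Z,W\rangle=\int_0^\infty(SZ_t)(g)g(t)dt$. For the remainder I would apply Cauchy--Schwarz, the independence of $\xi^n_i$ from $E^n_{-i}$, and the uniform bound $\ex[(E^n_{-i})^2]\le\exp(\|\check{g}^n\|_{L^2_n(\N)}^2)$ from (\ref{eq:exponential_L2}), reducing to $|R_n|\lesssim n^{-1/2}\big(\sum_{i=1}^{Nn}\ex[(Z^n_i)^2]\big)^{1/2}$; since $\tfrac1n\sum_i\ex[(Z^n_i)^2]=\|Z^n_{\lceil n\cdot\rceil}\|^2_{L^2(\Omega\times[0,\infty))}$ is bounded (a weakly convergent sequence is bounded), $R_n\to0$. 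I emphasize that this $S$-transform convergence uses only the weak convergence of $(Z^n_{\lceil n\cdot\rceil})$, not hypothesis (i).

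Finally I would use (i) to identify the limit. By (i) the sequence $(\delta^n(Z^n))$ is bounded in the Hilbert space $L^2(\Omega,\F,P)$, hence possesses a weakly convergent subsequence with some limit $X\in L^2(\Omega,\F,P)$. Applying the weak case of Theorem \ref{thm_S_transform_convergence_1} along this subsequence gives $(SX)(g)=\lim(S^n\delta^n(Z^n))(\check{g}^n)=\int_0^\infty(SZ_t)(g)g(t)dt$ for every $g\in\mathcal{E}$; by Definition \ref{def:Skorokhod} this shows $Z\in D(\delta)$ and $\delta(Z)=X$. Since the $S$-transform determines $X$ uniquely, every weak subsequential limit equals $\delta(Z)$, so the whole sequence converges weakly to $\delta(Z)$; equivalently, one reapplies Theorem \ref{thm_S_transform_convergence_1} in direction $(ii)\Rightarrow(i)$, now with the established target $\delta(Z)$, the proven $S$-transform convergence, and the bound from (i). The main obstacle is the bookkeeping in the first step: justifying the interchange of the $M\to\infty$ limit with the expectation and the conditioning that collapses the $S$-transform into the clean Riemann-type sum, together with confirming that $R_n$ is genuinely negligible. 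Once the formula is in hand, the weak-strong pairing and the weak-compactness identification are routine.
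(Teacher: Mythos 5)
Your proof is correct and essentially the paper's own argument with its lemmas unpacked: your direct conditioning computation of $(S^n\delta^n(Z^n))(\check{g}^n)$ is Proposition \ref{prop:Skorokhod_Stransform_discrete} (which the paper obtains as a special case of the duality in Proposition \ref{prop:duality_discrete}), your $A_n$/$R_n$ split with the weak--strong pairing against $W^n\to W$ is precisely Theorem \ref{thm_S_transform_process_convergence_discrete} together with its ``Moreover'' part, and your concluding weak-compactness and identification step via Theorem \ref{thm_S_transform_convergence_1} and Definition \ref{def:Skorokhod} coincides with the paper's proof verbatim. The only blemish is a normalization slip in the remainder estimate: your Cauchy--Schwarz/independence argument actually yields $|R_n|\lesssim n^{-1/2}\bigl(\tfrac1n\sum_{i=1}^{Nn}\ex[(Z^n_i)^2]\bigr)^{1/2}\rightarrow 0$, whereas the bound as written, $n^{-1/2}\bigl(\sum_{i=1}^{Nn}\ex[(Z^n_i)^2]\bigr)^{1/2}$, is only bounded and would not suffice.
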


As a first tool for the proof we state the discrete $S$-transform of a discrete Skorokhod integral.
\begin{proposition}\label{prop:Skorokhod_Stransform_discrete}
 Suppose $Z^n \in D(\delta^n)$. Then, for every $g\in \mathcal{E}$,
 $$
\left( S^n \delta^n(Z^n)\right)(\check g^n)=\frac{1}{n} \sum_{i=1}^\infty (S^nZ^n_i)(\check{g}^n\eins_{\N\setminus\{i\}})\check{g}^n(i).
 $$
\end{proposition}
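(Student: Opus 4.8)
The plan is to reduce the claim to a direct computation of the inner product $\ex[\delta^n(Z^n)\exp^{\diamond_n}(I^n(\check g^n))]$, exploiting that $g\in\mathcal{E}$ has finite support, say contained in $\{1,\ldots,Nn\}$, so that the discrete Wick exponential is a genuine finite product. First I would use that $\delta^n(Z^n)$ is, by definition, the strong $L^2(\Omega,\F,P)$-limit of the partial sums $\sum_{i=1}^M \ex[Z^n_i|\F^n_{M,-i}]\xi^n_i/\sqrt n$, together with the fact that $\exp^{\diamond_n}(I^n(\check g^n))\in L^2(\Omega,\F,P)$ by (\ref{eq:exponential_L2}); continuity of the inner product then lets me pull the limit outside the expectation and work one partial sum at a time.

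For a fixed $M\geq Nn$ and each index $i$, I would factor out the $i$th factor of the Wick exponential, writing
$$\exp^{\diamond_n}(I^n(\check g^n))=\left(1+\frac{1}{\sqrt n}\check g^n(i)\xi^n_i\right)\exp^{\diamond_n}(I^n(\check g^n\eins_{\N\setminus\{i\}})).$$
The crucial structural observation is that both $\ex[Z^n_i|\F^n_{M,-i}]$ and the remaining factor $\exp^{\diamond_n}(I^n(\check g^n\eins_{\N\setminus\{i\}}))$ are $\F^n_{M,-i}$-measurable, hence independent of $\xi^n_i$. Expanding $\xi^n_i(1+\frac{1}{\sqrt n}\check g^n(i)\xi^n_i)=\xi^n_i+\frac{1}{\sqrt n}\check g^n(i)(\xi^n_i)^2$ and invoking $\ex[\xi^n_i]=0$ together with $\ex[(\xi^n_i)^2]=1$, the first summand drops out and only the factor $\frac{1}{\sqrt n}\check g^n(i)$ survives from the interaction with $\xi^n_i/\sqrt n$. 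This is the step where the moment normalization of the noise enters decisively, and it is the computational heart of the argument.

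It then remains to recognize the surviving expectation. By the tower property, since $\exp^{\diamond_n}(I^n(\check g^n\eins_{\N\setminus\{i\}}))$ is $\F^n_{M,-i}$-measurable, one has $\ex[\ex[Z^n_i|\F^n_{M,-i}]\exp^{\diamond_n}(I^n(\check g^n\eins_{\N\setminus\{i\}}))]=\ex[Z^n_i\exp^{\diamond_n}(I^n(\check g^n\eins_{\N\setminus\{i\}}))]=(S^nZ^n_i)(\check g^n\eins_{\N\setminus\{i\}})$, which removes the conditioning entirely. Summing over $i$ yields $\frac{1}{n}\sum_{i=1}^{Nn}\check g^n(i)(S^nZ^n_i)(\check g^n\eins_{\N\setminus\{i\}})$ for every $M\geq Nn$; since this expression is independent of $M$, passing to the limit $M\to\infty$ is immediate, and extending the sum to $\infty$ is harmless because $\check g^n(i)=0$ for $i>Nn$.

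I do not expect a genuine obstacle here: the argument is elementary once the bookkeeping is set up. The only points requiring care are the justification for interchanging the limit $M\to\infty$ with the expectation, which is handled by the strong $L^2$-convergence in the definition of $\delta^n$ together with (\ref{eq:exponential_L2}), and the accurate tracking of which $\sigma$-fields each factor is measurable with respect to, so that the independence of $\xi^n_i$ and the tower property may be applied legitimately.
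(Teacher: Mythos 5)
Your argument is correct: the reduction to the partial sums in (\ref{eq:discrete_Skorokhod}) via strong $L^2$-convergence and Cauchy--Schwarz, the factorization of the discrete Wick exponential, the use of independence of $\xi^n_i$ from $\F^n_{M,-i}$ together with $\ex[\xi^n_i]=0$ and $\ex[(\xi^n_i)^2]=1$, and the tower property are all legitimate, and restricting to $M\geq Nn$ makes every factor a finite product measurable with respect to the correct $\sigma$-field, so the expression is indeed constant in $M$. The paper, however, organizes the proof differently: it obtains the proposition as an immediate corollary of the general duality $\frac{1}{n}\sum_{i=1}^\infty \ex[Z^n_i D^n_i X]=\ex[\delta^n(Z^n)X]$ for $X\in\D^{1,2}_n$ (Proposition \ref{prop:duality_discrete}), applied to $X=\exp^{\diamond_n}(I^n(\check g^n))$, combined with the identity $D^n_i \exp^{\diamond_n}(I^n(f^n))=f^n(i)\exp^{\diamond_n}(I^n(f^n\eins_{\N\setminus \{i\}}))$. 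Proving that duality requires a genuine limiting argument (martingale convergence plus dominated convergence, since for general $X$ the conditional expectations $\sqrt{n}\,\ex[\xi^n_i X|\F^n_{M,-i}]$ only stabilize as $M\rightarrow\infty$), which you avoid entirely because for a Wick exponential of the finitely supported $\check g^n$ nothing depends on $M$ once $M\geq Nn$. So your route is more elementary and self-contained, while the paper's route buys the adjointness statement in full generality --- the structural fact it reuses repeatedly afterwards (closedness of $\delta^n$, Theorems \ref{thm_Malliavin_derivative_convergence_1}, \ref{thm:D22} and \ref{thm:L22}) --- and then gets the present proposition for free. Note also that your factorization-plus-moments step is precisely the paper's formula for $D^n_i$ of a Wick exponential in disguise, since $\sqrt{n}\,\ex\bigl[\xi^n_i\bigl(1+\tfrac{1}{\sqrt{n}}\check g^n(i)\xi^n_i\bigr)\big|\F^n_{-i}\bigr]=\check g^n(i)$.
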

This result is a special case of the more general Proposition \ref{prop:duality_discrete} below, to which we refer the reader for the proof.

The second tool for the proof of Theorem \ref{thm:Skorokhod_convergence_1} is the following variant of Theorem \ref{thm_S_transform_convergence_1}
for stochastic processes.
\begin{theorem}\label{thm_S_transform_process_convergence_discrete}
Suppose $Z \in  L^2(\Omega \times [0,\infty))$, $(Z^n)_{n\in \N}$ satisfies $Z^n \in L^2_n(\Omega \times \N)$ for every $n\in \N$. Then the following assertions are equivalent as $n$ tends to infinity:
\begin{enumerate}
\item[(i)] $(Z^n_{\left\lceil n \cdot\right\rceil})$ converges strongly (resp. weakly) to $Z$  in $L^2(\Omega \times [0,\infty))$.
\item[(ii)] For every $g, h \in \mathcal{E} $
$$
 \frac{1}{n}\sum_{i=1}^{\infty} (S^n Z^n_i)(\check{g}^n)\check{h}^n(i) \rightarrow \int_{0}^{\infty}(S Z_s)(g) h(s) ds.
$$
and, additionally, $\ex[\int_{0}^{\infty} (Z^n_{\left\lceil ns\right\rceil})^2 ds] \rightarrow \ex[\int_{0}^{\infty} Z_s^2 ds]$ in the case of strong convergence (resp. $\sup_{n \in \N}\ex[\int_{0}^{\infty} (Z^n_{\left\lceil ns\right\rceil})^2 ds]<\infty$ in the case of weak convergence).
\end{enumerate}
Moreover, in (ii), $\frac{1}{n}\sum_{i=1}^{\infty} (S^n Z^n_i)(\check{g}^n)\check{h}^n(i) $ can be replaced by $\frac{1}{n}\sum_{i=1}^{\infty} (S^n Z^n_i)(\check{g}^n\eins_{\N \setminus \{i\}})\check{h}^n(i)$.
\end{theorem}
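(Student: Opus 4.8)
The plan is to deduce the statement from the abstract Lemma \ref{lem:weakconvergence}, applied in the Hilbert space $H:=L^2(\Omega\times[0,\infty))$ with $x_n:=Z^n_{\lceil n\cdot\rceil}$ and $x:=Z$, in exactly the same way that Theorem \ref{thm_S_transform_convergence_1} was reduced to that lemma. Since $H$ factorizes as the tensor product $L^2(\Omega,\F,P)\otimes L^2([0,\infty))$, and since $\{\exp^{\diamond}(I(g)):g\in\mathcal{E}\}$ is total in the first factor while $\mathcal{E}$ is total in the second, the family of product functions $x^{(g,h)}:=\exp^{\diamond}(I(g))\cdot h$, indexed by $(g,h)\in\mathcal{E}\times\mathcal{E}$, is total in $H$. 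As the corresponding approximating sequence I would take $x^{(g,h)}_n:=\exp^{\diamond_n}(I^n(\check g^n))\cdot\check h^n(\lceil n\cdot\rceil)$. Its strong convergence to $x^{(g,h)}$ in $H$ follows from Proposition \ref{proposition_Wiener_and_Wick_exp}, which yields $\exp^{\diamond_n}(I^n(\check g^n))\to\exp^{\diamond}(I(g))$ strongly in $L^2(\Omega,\F,P)$, together with $\|h-\check h^n(\lceil n\cdot\rceil)\|_{L^2([0,\infty))}\to0$ from (\ref{eq:hilf0001}); the two combine through the elementary bound $\|U_nv_n-Uv\|_H\le\|U_n\|_{L^2(\Omega)}\|v_n-v\|_{L^2}+\|U_n-U\|_{L^2(\Omega)}\|v\|_{L^2}$ and the boundedness of $\|U_n\|_{L^2(\Omega)}$.

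Next I would identify the inner products with the quantities appearing in (ii). Using the elementary identity $\int_0^\infty f(\lceil ns\rceil)\,ds=\frac1n\sum_{i=1}^\infty f(i)$ and Fubini's theorem, a direct computation gives
\[
\langle Z^n_{\lceil n\cdot\rceil},\,x^{(g,h)}_n\rangle_H=\frac1n\sum_{i=1}^\infty (S^nZ^n_i)(\check g^n)\,\check h^n(i),\qquad \langle Z,\,x^{(g,h)}\rangle_H=\int_0^\infty (SZ_s)(g)\,h(s)\,ds,
\]
together with the norm identity $\|Z^n_{\lceil n\cdot\rceil}\|_H^2=\ex[\int_0^\infty (Z^n_{\lceil ns\rceil})^2\,ds]=\|Z^n\|^2_{L^2_n(\Omega\times\N)}$. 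With these identifications, Lemma \ref{lem:weakconvergence} delivers the equivalence of (i) and (ii) at once, in both the strong and the weak formulation.

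The remaining, and I expect principal, point is the replacement asserted in the last sentence. Here I would exploit the factorization $\exp^{\diamond_n}(I^n(\check g^n))=(1+\frac1{\sqrt n}\check g^n(i)\xi^n_i)\,\exp^{\diamond_n}(I^n(\check g^n\eins_{\N\setminus\{i\}}))$ implicit in (\ref{eq_discrete_Wick_exp}), which shows that the $i$-th term of the difference of the two sums equals $\frac1{\sqrt n}\check g^n(i)\,\ex[Z^n_i\,\xi^n_i\,\exp^{\diamond_n}(I^n(\check g^n\eins_{\N\setminus\{i\}}))]$. By Cauchy--Schwarz, the independence of $\xi^n_i$ from $\exp^{\diamond_n}(I^n(\check g^n\eins_{\N\setminus\{i\}}))$, and the bound (\ref{eq:exponential_L2}), this expectation is dominated by $K\sqrt{\ex[(Z^n_i)^2]}$ with $K:=\sup_n\exp(\frac12\|\check g^n\|^2_{L^2_n(\N)})<\infty$. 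Summing against the weights $\frac1n\check h^n(i)$, applying Cauchy--Schwarz in $i$, and using that $g,h$ are bounded with finite support, I would bound the total difference by a constant times $n^{-1/2}\|\check g^n\|_{L^2_n(\N)}\|Z^n\|_{L^2_n(\Omega\times\N)}$. Since $\|Z^n\|_{L^2_n(\Omega\times\N)}=\|Z^n_{\lceil n\cdot\rceil}\|_H$ is bounded in $n$ under the hypotheses of (ii) (and, a fortiori, under (i)), this difference vanishes as $n\to\infty$, so the modified sum has the same limit as the original one and may be used in its place.
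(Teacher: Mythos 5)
Your proposal is correct and follows essentially the same route as the paper: the equivalence of (i) and (ii) is obtained from Lemma \ref{lem:weakconvergence} in $L^2(\Omega\times[0,\infty))$ with the total family $\{\exp^{\diamond}(I(g))h:\,g,h\in\mathcal{E}\}$, whose discrete approximants converge strongly by Proposition \ref{proposition_Wiener_and_Wick_exp} and (\ref{eq:hilf0001}), and the inner products are identified exactly as you compute them. Your treatment of the `Moreover'-part — factorizing the Wick exponential, using independence and (\ref{eq:exponential_L2}), then Cauchy--Schwarz in $i$ together with the norm boundedness of $(Z^n)$ — is the same estimate the paper uses, just written out with explicit constants.
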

\begin{proof}
 We wish to apply Lemma \ref{lem:weakconvergence} in order to prove the equivalence of (i) and (ii). As 
 $L^2(\Omega\times [0,\infty))=L^2(\Omega,\F,P)\otimes L^2([0,\infty))$ (with the tensor product in the sense of Hilbert spaces), the set 
 $\{\exp^{\diamond}(I(g)) h;\, g,h \in \mathcal{E} \}$ is total in $L^2(\Omega\times [0,\infty))$. In view of Proposition \ref{proposition_Wiener_and_Wick_exp} and
 (\ref{eq:hilf0001}), $(\exp^{\diamond_n}(I^n(\check g^n)) \check h^n(\lceil n \cdot \rceil))_{n\in \N}$ converges to $\exp^{\diamond}(I(g)) h$ 
 strongly in $L^2(\Omega\times [0,\infty))$ for every $g,h \in \mathcal{E}$. As 
 $$
 \frac{1}{n}\sum_{i=1}^{\infty} (S^n Z^n_i)(\check{g}^n)\check{h}^n(i)
 = \left\langle Z^n_{\lceil n \cdot \rceil}, e^{\diamond_n}(I^n(\check{g}^n))\check{h}^n(\lceil n \cdot \rceil)\right\rangle_{L^2(\Omega \times [0,\infty))},
 $$
 Lemma \ref{lem:weakconvergence} applies indeed.
 
  We finally note, that the `Moreover'-part of the assertion 
is an immediate consequence of the Cauchy-Schwarz inequality and the estimate
\begin{eqnarray*}
 &&\ex\left[\left( \exp^{\diamond_n}(I^n(\check{g}^n))-\exp^{\diamond_n}(I^n(\check{g}^n\eins_{\N \setminus \{i\}}))\right)^2 \right]
 \\ &=& 
 \ex\left[\left(\exp^{\diamond_n}(I^n(\check{g}^n\eins_{\N \setminus \{i\}}))\right)^2 \right]\ex\left[\left( \check{g}^n(i)\xi^{n}_{i}/\sqrt{n}\right)^2 \right]
 \\ &\leq&  \exp(\|\check{g}^n\|_{L^2_n(\N)}^2) \sup_{j\in\N} |g(j)|^2/n \rightarrow 0,
\end{eqnarray*}
making use of (\ref{eq:exponential_L2}) in the last line.
\end{proof}

We are now ready to give the proof of Theorem \ref{thm:Skorokhod_convergence_1}.

\begin{proof}[Proof of Theorem \ref{thm:Skorokhod_convergence_1}]
As the implication `$(ii)\Rightarrow (i)$' is trivial, we only have to show the converse implication. To this end, note first that,
by Proposition \ref{prop:Skorokhod_Stransform_discrete} and Theorem \ref{thm_S_transform_process_convergence_discrete},
 for every $g\in \mathcal{E}$,
 \begin{equation}\label{eq:hilf0009}
  \lim_{n\rightarrow \infty} (S^n\delta^n(Z^n))(\check{g}^n)=\int_0^\infty (SZ_t)(g)g(t)dt.
 \end{equation}
 As the sequence $(\delta^n(Z^n))_{n\in \N}$ is norm bounded by (i), it has a weakly convergent subsequence \cite[Theorem V.2.1]{Yosida}.
 We denote its limit by $X$.
 Then, 
applying Theorem \ref{thm_S_transform_convergence_1} and (\ref{eq:hilf0009}) along the subsequence, we obtain, for every $g\in \mathcal{E}$,
\begin{equation}\label{eq:hilf0023}
(SX)(g)=\int_0^\infty (SZ_t)(g)g(t)dt.
\end{equation}
Hence, by Definition \ref{def:Skorokhod}, $Z\in D(\delta)$ and $\delta(Z)=X$. Finally, by Theorem \ref{thm_S_transform_convergence_1}
and (\ref{eq:hilf0009})--(\ref{eq:hilf0023}),
weak $L^2(\Omega,\F,P)$-convergence of  $(\delta^n(Z^n))_{n\in \N}$ to $\delta(Z)$ holds along the whole sequence, and not  only along the subsequence.
\end{proof}

We now turn to the weak approximation of the Malliavin derivative.
Again, we apply a definition in terms of the $S$-transform, which we show to be equivalent to the more classical one in terms of the chaos
decomposition in the Appendix.
\begin{definition}\label{prop:Malliavin_domain}
 A random variable $X\in L^2(\Omega,\F,P)$ is said to belong to the domain $\D^{1,2}$ of the Malliavin derivative, if  there 
 is a stochastic process $Z\in L^2(\Omega\times [0,\infty))$ such that for every $g,h\in \mathcal{E}$,
 $$
 \int_0^\infty  (SZ_s)(g) h(s) ds =\ex\left[X \exp^{\diamond}(I(g)) \left(I(h) - \int_0^\infty  g(s)h(s) ds \right) \right].
 $$
 In this case,  $Z$ is unique and $DX:=Z$ is called the \emph{Malliavin derivative} $X$.
\end{definition}

For every $X\in L^2(\Omega,\F,P)$ we define the discretized Malliavin derivative of $X$ at $j\in \N$ with respect to $(\xi^n_i)_{i\in \N}$
by
$$
D^n_jX:= \sqrt{n} \ex[\xi^n_j X| \F^n_{-j}].
$$
We observe that, for fixed $j$, $D^n_j$ is a continuous linear operator from $L^2(\Omega,\F,P)$ to $L^2(\Omega,\F,P)$, because 
by H\"older's inequality for conditional expectations and the independence of the family $(\xi^n_i)_{i\in \N}$, 
$$
|D^n_jX|^2\leq n  \ex[X^2|\F^n_{-j}]\,\ex[(\xi^n_j)^2| \F^n_{-j}]=
n  \ex[X^2|\F^n_{-j}].
$$

We say that $X$ belongs to the domain
$\D^{1,2}_n$
of the discretized Malliavin derivative, if the process $D^nX:=(D^n_iX)_{i\in \N}$ is a member of $L^2_n(\Omega\times \N)$. In this case 
$D^nX$ is called the \emph{discretized Malliavin derivative} of $X$ with respect to $(\xi^n_i)_{i\in \N}$. 
As $D^n_j$ is continuous for fixed $j$, it is easy to check that the discretized Malliavin derivative
is a densely defined closed operator from $L^2(\Omega,\F,P)$ to $L^2_n(\Omega\times \N)$. 

In the following theorem
and in the remainder of the paper we use the convention $Z^n_0=0$ for $Z^n\in L^2_n(\Omega\times \N)$.

\begin{theorem}\label{thm_Malliavin_derivative_convergence_1}
 Suppose $(X^n)_{n\in \N}$ converges to $X$ weakly in $L^2(\Omega,\F,P)$ and  $X^n\in \D^{1,2}_n$ for every $n\in \N$. Then, the following 
 are equivalent:
 \begin{enumerate}
  \item[(i)] $\sup_{n\in \N} \frac{1}{n} \sum_{i=1}^\infty \ex[(D^n_iX^n)^2]<\infty$.
  \item[(ii)] $X\in \D^{1,2}$ and $(D^n_{\lceil n \cdot\rceil} X^n)_{n\in \N}$ converges to $DX$ weakly in $L^2(\Omega\times [0,\infty))$. 
 \end{enumerate}
\end{theorem}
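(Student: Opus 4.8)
The plan is to mirror the proof of Theorem \ref{thm:Skorokhod_convergence_1}, exploiting the duality between the discretized Malliavin derivative and the discrete Skorokhod integral. First I note that $\|D^n_{\lceil n\cdot\rceil}X^n\|_{L^2(\Omega\times[0,\infty))}^2=\frac1n\sum_{i=1}^\infty\ex[(D^n_iX^n)^2]$, so condition (i) is exactly the assertion that $(D^n_{\lceil n\cdot\rceil}X^n)_{n\in\N}$ is norm bounded in $L^2(\Omega\times[0,\infty))$. Hence $(ii)\Rightarrow(i)$ is immediate, as weak convergence entails norm boundedness, and only $(i)\Rightarrow(ii)$ requires work.

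For $(i)\Rightarrow(ii)$ the starting point is a discrete duality identity. Using $D^n_jX=\sqrt n\,\ex[\xi^n_jX|\F^n_{-j}]$, the definition of $\delta^n$, and the independence of $\xi^n_i$ from $\F^n_{-i}$, I will check that $\ex[X^n\delta^n(Z^n)]=\frac1n\sum_{i=1}^\infty\ex[Z^n_iD^n_iX^n]$ for $Z^n\in D(\delta^n)$. Applying this to the finitely supported process $Z^n_i:=\check h^n(i)\exp^{\diamond_n}(I^n(\check g^n))$, for which (\ref{eq:discrete_Skorokhod_finite}) together with $\ex[\exp^{\diamond_n}(I^n(\check g^n))\mid\F^n_{-i}]=\exp^{\diamond_n}(I^n(\check g^n\eins_{\N\setminus\{i\}}))$ gives $Z^n\in D(\delta^n)$, yields the key formula
\begin{equation*}
\frac1n\sum_{i=1}^\infty (S^nD^n_iX^n)(\check g^n)\,\check h^n(i)=\ex[X^n\,\delta^n(Z^n)],\qquad g,h\in\mathcal E.
\end{equation*}
This recasts the $S$-transform quantity of Theorem \ref{thm_S_transform_process_convergence_discrete} as an inner product against a concrete discrete Skorokhod integral.

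The heart of the argument is to show that $\delta^n(Z^n)$ converges \emph{strongly} in $L^2(\Omega,\F,P)$ to $Y:=\exp^{\diamond}(I(g))\bigl(I(h)-\int_0^\infty g(s)h(s)\,ds\bigr)$, which I will identify via Definition \ref{def:Skorokhod} and the $S$-transform with $\delta(h\exp^{\diamond}(I(g)))$. Since $Z^n_{\lceil n\cdot\rceil}\to h\exp^{\diamond}(I(g))$ strongly in $L^2(\Omega\times[0,\infty))$ by Proposition \ref{proposition_Wiener_and_Wick_exp} and (\ref{eq:hilf0001}), Theorem \ref{thm:Skorokhod_convergence_1} delivers the weak convergence $\delta^n(Z^n)\rightharpoonup Y$ as soon as $\sup_n\ex[(\delta^n(Z^n))^2]<\infty$. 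To upgrade to strong convergence I compute the second moment explicitly: writing $\delta^n(Z^n)=\sum_i\check h^n(i)\exp^{\diamond_n}(I^n(\check g^n\eins_{\N\setminus\{i\}}))\xi^n_i/\sqrt n$ and using the independence and unit variance of the $\xi^n_i$, a discrete Skorokhod-isometry computation gives
\begin{equation*}
\ex[(\delta^n(Z^n))^2]=\frac1n\sum_i(\check h^n(i))^2\!\prod_{k\neq i}\!\Bigl(1+\tfrac{(\check g^n(k))^2}{n}\Bigr)+\frac1{n^2}\sum_{i\neq j}\check h^n(i)\check g^n(i)\check h^n(j)\check g^n(j)\!\prod_{k\neq i,j}\!\Bigl(1+\tfrac{(\check g^n(k))^2}{n}\Bigr),
\end{equation*}
which I will show converges to $\exp(\int_0^\infty g^2)\bigl(\int_0^\infty h^2+(\int_0^\infty gh)^2\bigr)=\ex[Y^2]$. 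Weak convergence together with convergence of the norms then yields the desired strong convergence. I expect this moment computation, and the care needed to control the products after deleting one or two factors uniformly in the summation indices, to be the main technical obstacle; strong (not merely weak) convergence of $\delta^n(Z^n)$ is essential because $X^n$ is only assumed to converge weakly.

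With the strong convergence $\delta^n(Z^n)\to Y$ established, the weak convergence $X^n\rightharpoonup X$ gives $\ex[X^n\delta^n(Z^n)]\to\ex[XY]$, i.e. $\frac1n\sum_i(S^nD^n_iX^n)(\check g^n)\check h^n(i)\to\ex[X\exp^{\diamond}(I(g))(I(h)-\int_0^\infty g(s)h(s)\,ds)]$ for all $g,h\in\mathcal E$. Finally, by (i) the sequence $(D^n_{\lceil n\cdot\rceil}X^n)$ has a weakly convergent subsequence with limit $Z\in L^2(\Omega\times[0,\infty))$; applying the forward implication of Theorem \ref{thm_S_transform_process_convergence_discrete} along this subsequence and matching the two expressions for the limit of the $S$-transform quantity shows $\int_0^\infty(SZ_s)(g)h(s)\,ds=\ex[X\exp^{\diamond}(I(g))(I(h)-\int_0^\infty gh)]$ for all $g,h\in\mathcal E$. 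By Definition \ref{prop:Malliavin_domain} this means $X\in\D^{1,2}$ with $DX=Z$, and since the limit does not depend on the chosen subsequence, the whole sequence $(D^n_{\lceil n\cdot\rceil}X^n)$ converges weakly to $DX$, which is (ii).
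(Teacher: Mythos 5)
Your proposal is correct and takes essentially the same route as the paper: you inline the discrete duality relation (the paper's Proposition \ref{prop:duality_discrete}) and the strong $L^2$-convergence of $\delta^n(\exp^{\diamond_n}(I^n(\check g^n))\,\check h^n)$ to $\exp^{\diamond}(I(g))\bigl(I(h)-\int_0^\infty g(s)h(s)\,ds\bigr)$ obtained by the explicit second-moment computation plus norm matching (the paper's Proposition \ref{prop:Skorokhod_simple}), and then conclude exactly as the paper does via the subsequence argument, Theorem \ref{thm_S_transform_process_convergence_discrete}, and Definition \ref{prop:Malliavin_domain}. The only difference is organizational, not mathematical.
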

 The proof is prepared by  two propositions. The first one contains the duality relation between the discrete Skorokhod integral and discretized 
Malliavin derivative.
\begin{proposition}\label{prop:duality_discrete}
For every $n\in \N$, the discrete Skorokhod integral is the adjoint operator of the discretized Malliavin derivative.
In particular, $\delta^n$ is closed and, for every
 $X\in \D^{1,2}_n$ and $Z^n\in D(\delta^n)$,
 $$
 \frac{1}{n} \sum_{i=1}^\infty  \ex\left[Z^n_i D^n_iX \right]=\ex[\delta^n(Z^n) X].
 $$
\end{proposition}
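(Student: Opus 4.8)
The plan is to identify $\delta^n$ with the Hilbert-space adjoint $(D^n)^\ast$ of the discretized Malliavin derivative $D^n\colon \D^{1,2}_n\to L^2_n(\Omega\times\N)$. Since $D^n$ is densely defined and closed (as recorded before the statement), its adjoint is automatically closed; hence, once I establish $\delta^n=(D^n)^\ast$ as operators (equality of domains and of values), the asserted closedness of $\delta^n$ comes for free, and the displayed duality relation $\frac1n\sum_i\ex[Z^n_iD^n_iX]=\ex[\delta^n(Z^n)X]$ is exactly the defining property of the adjoint, specialized to $X\in\D^{1,2}_n$ and $Z^n\in D(\delta^n)$. I would therefore prove the two graph inclusions $\delta^n\subseteq(D^n)^\ast$ and $(D^n)^\ast\subseteq\delta^n$ in turn.

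The starting point is the duality for finitely supported integrands. For $W^n\in L^2_n(\Omega\times\N)$ supported in $\{1,\dots,N\}$ and $X\in\D^{1,2}_n$, I would insert the explicit formula (\ref{eq:discrete_Skorokhod_finite}) for $\delta^n(W^n)$ and compute $\ex[\delta^n(W^n)X]$ by conditioning each summand on $\F^n_{-i}$. Two facts drive this: $\ex[\xi^n_iX|\F^n_{-i}]=D^n_iX/\sqrt n$ by the definition of $D^n_iX$, and $D^n_iX$ is itself $\F^n_{-i}$-measurable, so the tower property removes the inner conditional expectation on $W^n_i$; this yields $\ex[\delta^n(W^n)X]=\frac1n\sum_{i=1}^\infty\ex[W^n_iD^n_iX]$. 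To reach a general $Z^n\in D(\delta^n)$, I would apply this identity to the truncated and projected integrands $\hat Z^{n,M}_i:=\ex[Z^n_i|\F^n_M]\eins_{\{i\le M\}}$. Using $\ex[V|\F^n_{-i}]=\ex[V|\F^n_{M,-i}]$ for $\F^n_M$-measurable $V$, one checks that $\delta^n(\hat Z^{n,M})$ equals the $M$-th partial sum $S_M$ appearing in (\ref{eq:discrete_Skorokhod}), whence $\ex[S_MX]=\frac1n\sum_{i=1}^M\ex[\ex[Z^n_i|\F^n_M]D^n_iX]$. Since $S_M\to\delta^n(Z^n)$ strongly by the definition of $D(\delta^n)$, the left-hand side tends to $\ex[\delta^n(Z^n)X]$; the right-hand side tends to $\frac1n\sum_i\ex[Z^n_iD^n_iX]$ by a dominated-convergence argument for series, with summable bound $\|Z^n_i\|_{L^2}\|D^n_iX\|_{L^2}$ (finite by Cauchy--Schwarz, as $Z^n\in L^2_n(\Omega\times\N)$ and $X\in\D^{1,2}_n$). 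Equating the two limits proves the displayed duality and shows $Z^n\in D((D^n)^\ast)$ with $(D^n)^\ast Z^n=\delta^n(Z^n)$, i.e. $\delta^n\subseteq(D^n)^\ast$.

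The reverse inclusion is the delicate part. Let $Z^n\in D((D^n)^\ast)$ and set $Y:=(D^n)^\ast Z^n$, so that $\frac1n\sum_i\ex[Z^n_iD^n_iX]=\ex[YX]$ for all $X\in\D^{1,2}_n$; I must show $Z^n\in D(\delta^n)$ with $\delta^n(Z^n)=Y$, i.e. $S_M\to Y$ strongly. The key claim is $S_M=\ex[Y|\F^n_M]$. To see it, I would test against an arbitrary $\F^n_M$-measurable $X$, noting first that every such $X\in L^2$ already lies in $\D^{1,2}_n$: the bound $\ex[(D^n_iX)^2]\le n\,\ex[X^2]$ together with $D^n_iX=0$ for $i>M$ makes the relevant sum finite. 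For such $X$ one has $D^n_iX=0$ for $i>M$ and $D^n_iX$ is $\F^n_M$-measurable for $i\le M$, so $\ex[YX]=\frac1n\sum_{i=1}^M\ex[\ex[Z^n_i|\F^n_M]D^n_iX]=\ex[S_MX]$ by the finite-support duality applied to $\hat Z^{n,M}$; since $S_M$ is $\F^n_M$-measurable, this forces $S_M=\ex[Y|\F^n_M]$. Finally I would show $Y$ is $\F^n$-measurable: because $D^n_iX=D^n_i\ex[X|\F^n]$, the operator $D^n$ annihilates every $X$ with $\ex[X|\F^n]=0$, and such $X$ lie in $\D^{1,2}_n$, so $Y\perp\{X:\ex[X|\F^n]=0\}=L^2(\Omega,\F^n,P)^\perp$. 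With $Y$ being $\F^n$-measurable and $\F^n_M\uparrow\F^n$, the martingale convergence theorem gives $S_M=\ex[Y|\F^n_M]\to Y$ strongly, which is precisely $Z^n\in D(\delta^n)$ with $\delta^n(Z^n)=Y$. This yields $(D^n)^\ast\subseteq\delta^n$ and hence $\delta^n=(D^n)^\ast$.

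I expect the main obstacle to be this reverse inclusion, and within it the identification $S_M=\ex[Y|\F^n_M]$ together with the measurability reduction $Y\in L^2(\Omega,\F^n,P)$; both rely on exploiting the independence of the $(\xi^n_i)$ to replace $\F^n_{-i}$ by $\F^n_{M,-i}$ on finitely generated variables, and on the fact that $D^n$ only sees $\ex[\,\cdot\,|\F^n]$. As a byproduct, Proposition \ref{prop:Skorokhod_Stransform_discrete} follows by specializing the duality to $X=\exp^{\diamond_n}(I^n(\check g^n))\in\D^{1,2}_n$ (membership guaranteed by (\ref{eq:exponential_L2}) and the finite support of $\check g^n$), for which a direct computation gives $D^n_iX=\check g^n(i)\exp^{\diamond_n}(I^n(\check g^n\eins_{\N\setminus\{i\}}))$.
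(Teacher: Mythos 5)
Your proposal is correct and follows essentially the same route as the paper: both directions of the adjoint identification are proved via the same conditional-expectation swaps (replacing $\F^n_{-i}$ by $\F^n_{M,-i}$ on finitely generated variables), the identification of the partial sums with $\ex[Y|\F^n_M]$, the reduction $Y=\ex[Y|\F^n]$, and martingale convergence, with closedness obtained as a general property of adjoints. The only cosmetic difference is that you pass the limit $M\to\infty$ through projections $\ex[Z^n_i|\F^n_M]$ of the integrand, whereas the paper passes it through the approximations $\sqrt{n}\,\ex[\xi^n_i X|\F^n_{M,-i}]$ of $D^n_iX$; both rest on the same dominated-convergence argument.
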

We emphasize that, choosing $X=\exp^{\diamond_n}(I^n(\check g^n))$, $g\in \mathcal{E}$, in Proposition \ref{prop:duality_discrete}, we obtain the assertion of Proposition \ref{prop:Skorokhod_Stransform_discrete}.
Indeed, we only have to note that,
for every $f^n\in L^2_n(\N)$,
 \begin{equation*}
 D^n_i \exp^{\diamond_n}(I^n(f^n))=f^n(i) \exp^{\diamond_n}(I^n(f^n\eins_{\N\setminus \{i\}})).
 \end{equation*}
\begin{proof}
Suppose first, that $Z^n\in D(\delta^n)$ and $X\in \D^{1,2}_n$. Then, for every $M\in \N$, and $i\in \N$, 
$$
\ex\left[\left|\sqrt{n} \ex[\xi^n_i X| \F^n_{M,-i}]\right|^2\right]\leq \ex\left[\left|D^n_iX\right|^2\right].
$$
Hence, by the martingale convergence theorem and dominated convergence,
$$
\lim_{M\rightarrow \infty} \frac{1}{n}\sum_{i=1}^\infty 
\ex\left[\left|\sqrt{n} \ex[\xi^n_i X|\F^n_{M,-i}]- D^n_iX\right|^2 \right] =0.
$$
Consequently,
 \begin{eqnarray*}
\frac{1}{n} \sum_{i=1}^\infty  \ex\left[Z^n_i D^n_iX \right]
 &=& \lim_{M\rightarrow \infty} \frac{1}{n} \sum_{i=1}^M  \ex\left[Z^n_i  \sqrt{n} \ex[\xi^n_i X|\F^n_{M,-i}] \right]
 \\ &=&
  \lim_{M\rightarrow \infty}\frac{1}{\sqrt n} \sum_{i=1}^M  \ex\left[ X \xi^n_i   \ex[Z^n_i|\F^n_{M,-i}] \right]
 \\ &=& \lim_{M\rightarrow \infty} \ex\left[X \sum_{i=1}^M  \ex[Z^n_i|\F^n_{M,-i}] \frac{\xi^n_i}{\sqrt n} \right] 
 =
  \ex[ X\delta^n(Z^n)]. 
 \end{eqnarray*}
Conversely, suppose that $Z^n$ is in the domain of the adjoint operator of the discretized Malliavin derivative, i.e., there 
is an $Y^n\in L^2(\Omega,\F,P)$ such that for every $X\in \D^{1,2}_n$,
\begin{equation}\label{eq:hilf_duality}
 \frac{1}{n} \sum_{i=1}^\infty  \ex\left[Z^n_i D^n_iX \right]=\ex[Y^n X].
\end{equation}
We first note that, by construction, $X\in \D^{1,2}_n$ if and only $\ex[X|\F^n] \in \D^{1,2}_n$, and, if this is the case, both random variables
have the same discretized Malliavin derivative. Hence, applying the duality relation (\ref{eq:hilf_duality}), with $X$ and $\ex[X|\F^n]$,
we observe that, $Y^n=\ex[Y^n|\F^n]$. Now suppose that $X \in L^2(\Omega,\F^n_M,P)$. Then $X\in \D^{1,2}_n$, $D^n_i X=\sqrt{n} \ex[\xi^n_i X|\F^n_{M,-i}]$  for every $i\leq M$, and $D^n_i X=0$ for $i>M$. Hence,
(\ref{eq:hilf_duality}) and the same manipulations as above imply
\begin{eqnarray*}
\ex[Y^n X]= \ex\left[X \sum_{i=1}^M  \ex[Z^n_i|\F^n_{M,-i}] \frac{\xi^n_i}{\sqrt n} \right],
\end{eqnarray*}
i.e. 
$$
\ex[Y^n|\mathcal{F}^n_M]=\sum_{i=1}^M  \ex[Z^n_i|\F^n_{M,-i}] \frac{\xi^n_i}{\sqrt n}.
$$
By the martingale convergence theorem, $(\ex[Y^n|\mathcal{F}^n_M])_{M\in \N}$ converges strongly in $L^2(\Omega,\F,P)$ to 
$\ex[Y^n|\F^n]=Y^n$. Hence, $Z^n \in D(\delta^n)$ and $\delta^n(Z^n)=Y^n$. Finally, closedness is a general property of 
adjoint operators, see, e.g., \cite[p. 196]{Yosida}.
\end{proof}

The next proposition is a consequence of the weak convergence result for discrete Skorokhod integrals in Theorem \ref{thm:Skorokhod_convergence_1}.

\begin{proposition}\label{prop:Skorokhod_simple}
 For every $g,h \in \mathcal{E}$, 
 $$
 \lim_{n\rightarrow \infty} \delta^n( \exp^{\diamond_n}(I^n(\check g^n))\, \check h^n) = \exp^{\diamond}(I(g)) \left(I(h) - \int_0^\infty  g(s)h(s) ds \right) 
 $$
 strongly in $L^2(\Omega,\F,P)$.
\end{proposition}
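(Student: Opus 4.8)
The plan is to establish the strong $L^2(\Omega,\F,P)$-limit
\[
Y:=\exp^{\diamond}(I(g))\Big(I(h)-\int_0^\infty g(s)h(s)\,ds\Big)
\]
by combining the weak approximation result of Theorem~\ref{thm:Skorokhod_convergence_1} with a convergence-of-norms argument. Set $Z^n_i:=\exp^{\diamond_n}(I^n(\check g^n))\,\check h^n(i)$. Since $g,h\in\mathcal{E}$ have compact support, $Z^n\in L^2_n(\Omega\times\N)$ and $Z^n=Z^n\eins_{[1,N]}$ for $N$ large, so $Z^n\in D(\delta^n)$. As $\xi^n_i$ enters $\exp^{\diamond_n}(I^n(\check g^n))$ only through its $i$th (centered) factor, one has $\ex[Z^n_i|\F^n_{-i}]=\check h^n(i)\exp^{\diamond_n}(I^n(\check g^n\eins_{\N\setminus\{i\}}))$, and (\ref{eq:discrete_Skorokhod_finite}) yields the explicit representation
\[
\delta^n(Z^n)=\sum_{i=1}^\infty \check h^n(i)\,\exp^{\diamond_n}(I^n(\check g^n\eins_{\N\setminus\{i\}}))\,\frac{\xi^n_i}{\sqrt n}.
\]
I would first record the straightforward fact that the integrands converge: since $\exp^{\diamond_n}(I^n(\check g^n))\to\exp^{\diamond}(I(g))$ strongly in $L^2(\Omega,\F,P)$ by Proposition~\ref{proposition_Wiener_and_Wick_exp} and $\check h^n(\lceil n\cdot\rceil)\to h$ in $L^2([0,\infty))$ by (\ref{eq:hilf0001}), the product $Z^n_{\lceil n\cdot\rceil}$ converges to $Z:=\exp^{\diamond}(I(g))\,h$ strongly, hence weakly, in the Hilbert tensor product $L^2(\Omega\times[0,\infty))$.

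Next I would identify $\delta(Z)=Y$ directly in the continuous setting through Definition~\ref{def:Skorokhod}. Using the reproducing identity $\ex[\exp^{\diamond}(I(g))\exp^{\diamond}(I(f))]=\exp(\int_0^\infty g(s)f(s)\,ds)$ one gets $(SZ_t)(f)=h(t)\exp(\int_0^\infty gf)$, so that $\int_0^\infty (SZ_t)(f)f(t)\,dt=\exp(\int_0^\infty gf)\int_0^\infty hf$. Differentiating the reproducing identity along the direction $h$ gives $\ex[\exp^{\diamond}(I(g))\exp^{\diamond}(I(f))I(h)]=\big(\int_0^\infty gh+\int_0^\infty fh\big)\exp(\int_0^\infty gf)$, whence $(SY)(f)=(\int_0^\infty fh)\exp(\int_0^\infty gf)$, which matches the previous display. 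Thus $Y$ satisfies the defining relation of Definition~\ref{def:Skorokhod}, so $Z\in D(\delta)$ with $\delta(Z)=Y$. Together with the weak convergence of the integrands, Theorem~\ref{thm:Skorokhod_convergence_1} then yields weak $L^2$-convergence $\delta^n(Z^n)\to Y$ as soon as its condition~(i), the uniform bound $\sup_n\ex[|\delta^n(Z^n)|^2]<\infty$, is verified.

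The substantive step, and the expected main obstacle, is the convergence of second moments $\ex[|\delta^n(Z^n)|^2]\to\ex[Y^2]$, which at once furnishes the uniform bound required above and, in combination with the weak convergence, upgrades it to strong convergence (in a Hilbert space, weak convergence together with convergence of norms is strong convergence). Starting from the explicit representation I would expand $\ex[|\delta^n(Z^n)|^2]$ into a double sum over $i,k$ and split off the diagonal. For $i=k$, independence and $\ex[(\xi^n_i)^2]=1$ produce the factor $\ex[(\exp^{\diamond_n}(I^n(\check g^n\eins_{\N\setminus\{i\}})))^2]=\prod_{j\ne i}(1+\tfrac1n(\check g^n(j))^2)\to\exp(\int_0^\infty g^2)$, and the diagonal part tends to $\exp(\int_0^\infty g^2)\int_0^\infty h^2$. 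For $i\ne k$, I would peel the $i$th and $k$th factors off the two Wick exponentials and use $\ex[(1+\tfrac1{\sqrt n}\check g^n(i)\xi^n_i)\xi^n_i]=\tfrac1{\sqrt n}\check g^n(i)$ to extract the surviving factor $\tfrac1n\check g^n(i)\check g^n(k)$; the off-diagonal part then collapses to a double Riemann sum converging to $(\int_0^\infty gh)^2\exp(\int_0^\infty g^2)$. Summing the two contributions gives $\ex[|\delta^n(Z^n)|^2]\to(\int_0^\infty h^2+(\int_0^\infty gh)^2)\exp(\int_0^\infty g^2)$, and a short Girsanov computation identifies this limit as $\ex[Y^2]$. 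The delicate point is the off-diagonal bookkeeping — isolating the genuine $O(1/n)$ contribution of the cross terms and controlling the remaining product factors uniformly in $i,k$; all the limits themselves are of the Riemann-sum and product type already used in the proof of Proposition~\ref{proposition_Wiener_and_Wick_exp}.
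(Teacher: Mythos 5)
Your proposal is correct and takes essentially the same route as the paper's proof: the same explicit representation of $\delta^n(Z^n)$ via (\ref{eq:discrete_Skorokhod_finite}), the same diagonal/off-diagonal expansion of $\ex[|\delta^n(Z^n)|^2]$ converging to $e^{\int_0^\infty g(s)^2ds}\left(\left(\int_0^\infty g(s)h(s)ds\right)^2+\int_0^\infty h(s)^2ds\right)$, weak convergence via Theorem \ref{thm:Skorokhod_convergence_1}, and the final upgrade to strong convergence by convergence of norms. The only cosmetic differences are that you spell out the $S$-transform verification of $\delta(\exp^{\diamond}(I(g))\,h)=\exp^{\diamond}(I(g))\left(I(h)-\int_0^\infty g(s)h(s)\,ds\right)$, which the paper states as one of two admissible arguments (citing Nualart for the alternative), and your ``Girsanov computation'' of $\ex[Y^2]$ is precisely the paper's double Cameron--Martin shift.
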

\begin{proof}
 Notice first that, for fixed $n\in \N$, $\exp^{\diamond_n}(I^n(\check g^n))\, \check h^n\in D(\delta^n)$, because $\check h^n(i)$ vanishes, 
 if $i$  is sufficiently large. A direct computation, making use of (\ref{eq:discrete_Skorokhod_finite}), shows
 $$
 \delta^n( \exp^{\diamond_n}(I^n(\check g^n))\, \check h^n) = \sum_{i=1}^\infty \exp^{\diamond_n}(I^n(\check g^n \eins_{\N\setminus \{i\}})) \check h^n(i) \frac{\xi^n_i}{\sqrt n}.
 $$
 For $i\neq j$ we obtain, by independence of $(\xi^n_k)_{k\in \N}$,
$$
 \ex\left[\exp^{\diamond_n}(I^n(\check g^n \eins_{\N\setminus \{i\}}))\exp^{\diamond_n}(I^n(\check g^n \eins_{\N\setminus \{j\}}))\xi^n_i \xi^n_j \right] 
 = \check g^n(i) \frac{1}{\sqrt{n}}\check g^n(j) \frac{1}{\sqrt{n}} \prod_{k\in \N\setminus \{i,j\}} \left(1+\check g^n(k)^2 \frac{1}{n}\right).
$$
 Combining this with an analogous calculation for the case $i=j$ yields
 \begin{eqnarray*}
  \ex\left[ \left|\delta^n( \exp^{\diamond_n}(I^n(\check g^n))\, \check h^n)\right|^2\right]&=& \frac{1}{n^2}\sum_{i,j=1,\; i\neq j}^\infty \check h^n(i)\check g^n(i) \check h^n(j)\check g^n(j) \prod_{k\in \N\setminus \{i,j\}} \left(1+\check g^n(k)^2 \frac{1}{n}\right)
  \\ &&+ \frac{1}{n} \sum_{i=1}^\infty  \check h^n(i)^2 \prod_{k\in \N\setminus \{i\}} \left(1+\check g^n(k)^2 \frac{1}{n}\right).
 \end{eqnarray*}
As $g$ and $h$ are bounded with compact support, it is straightforward to check in view of (\ref{eq:hilf0001}) that
\begin{equation}\label{eq:hilf0024}
 \lim_{n\rightarrow \infty}  \ex\left[ \left|\delta^n( \exp^{\diamond_n}(I^n(\check g^n))\, \check h^n)\right|^2\right]=
 e^{\int_0^\infty g(s)^2ds} \left( \left(\int_0^\infty h(s)g(s)ds\right)^2+ \int_0^\infty h(s)^2ds \right).
\end{equation}
Thus,  $(\delta^n(\exp^{\diamond_n}(I^n(\check g^n))\, \check h^n))_{n\in \N}$ 
converges  to $\delta(\exp^{\diamond}(I( g))\,  h)$ weakly in $L^2(\Omega,\F,P)$ by Theorem \ref{thm:Skorokhod_convergence_1}. 
The identity
$$
\delta(\exp^{\diamond}(I( g))\,  h)= \exp^{\diamond}(I(g)) \left(I(h) - \int_0^\infty  g(s)h(s) ds \right) 
$$
can either be derived by a direct computation making use of the $S$-transform definition of the Skorokhod integral (Definition 
\ref{def:Skorokhod}) or alternatively is a simple consequence of \cite[Proposition 1.3.3]{Nualart} in conjunction with Definition 1.2.1
in the same reference. Applying the Cameron-Martin shift \cite[Theorem 14.1]{Janson} twice, we observe
\begin{eqnarray*}
 && \ex\left[\left(\exp^{\diamond}(I(g)) \left(I(h) - \int_0^\infty  g(s)h(s) ds \right) \right)^2\right]=
  e^{\int_0^\infty g(s)^2ds} \ex\left[ \exp^{\diamond}(I(g)) I(h)^2 \right] \\ &=& 
  e^{\int_0^\infty g(s)^2ds} \ex\left[ \left(I(h)+  \int_0^\infty  g(s)h(s) ds \right)^2 \right]
  \\ &=& e^{\int_0^\infty g(s)^2ds} \left( \left(\int_0^\infty h(s)g(s)ds\right)^2+ \int_0^\infty h(s)^2ds \right).
\end{eqnarray*}
Thanks to (\ref{eq:hilf0024}), this turns weak into strong convergence.
\end{proof}

The proof of Theorem \ref{thm_Malliavin_derivative_convergence_1} is now analogous to that of Theorem \ref{thm:Skorokhod_convergence_1}.

\begin{proof}[Proof of Theorem \ref{thm_Malliavin_derivative_convergence_1}]
`$(ii)\Rightarrow (i)$' is obvious, since $\frac{1}{n} \sum_{i=1}^\infty \ex[(D^n_i X^n)^2]=\int_0^\infty \ex[(D^n_{\lceil ns\rceil} X^n)^2 ] ds  $.
`$(i)\Rightarrow (ii)$': Notice first that, for every $g,h \in \mathcal{E}$, by Proposition \ref{prop:duality_discrete} with $Z^n=\exp^{\diamond_n}(I^n(\check g^n))\, \check h^n$ and Proposition 
 \ref{prop:Skorokhod_simple},
 \begin{align}\label{eq:hilf0021}
  \lim_{n\rightarrow \infty} \frac{1}{n}\sum_{i=1}^{\infty} (S^n D^n_iX^n)(\check{g}^n)\check{h}^n(i) &
  = \lim_{n \rightarrow \infty} \ex[X^n\delta^n(\exp^{\diamond_n}(I^n(\check{g}^n))\check{h}^n)] \nonumber\\
  &=\ex\left[X \exp^{\diamond}(I(g)) \left(I(h) - \int_0^\infty  g(s)h(s) ds \right) \right],
 \end{align}
since $(X^n)$ converges to $X$ weakly in $L^2(\Omega,\F,P)$. As the sequence $(D^n_{\lceil n\cdot \rceil}X^n)_{n\in \N}$ is norm bounded 
in $L^2(\Omega\times [0,\infty))$ by (i), it has a weakly convergent subsequence. We denote its limit by $Z$. Applying (\ref{eq:hilf0021}) 
and Theorem \ref{thm_S_transform_process_convergence_discrete} along this subsequence, we conclude
\begin{equation}\label{eq:hilf0022}
\int_0^\infty (SZ_s)(g) h(s) ds= \ex\left[X \exp^{\diamond}(I(g)) \left(I(h) - \int_0^\infty  g(s)h(s) ds \right) \right].
\end{equation}
Hence, $X\in \D^{1,2}$ and $DX=Z$ by Definition \ref{prop:Malliavin_domain}. Finally, applying (\ref{eq:hilf0021})--(\ref{eq:hilf0022}) and Theorem \ref{thm_S_transform_process_convergence_discrete}
along the whole sequence $(D^n_{\lceil n\cdot \rceil}X^n)_{n\in \N}$, shows that this sequence converges weakly in $L^2(\Omega\times [0,\infty))$
to $DX$.
\end{proof}

In order to check the assumptions of Theorem \ref{thm:Skorokhod_convergence_1}, 
we consider the space $\L^{1,2}_n$, which consists of processes $Z^n\in L^2_n(\Omega\times \N)$ such that $Z^n_i \in \D^{1,2}_n$ for every
$i\in \N$ and
\begin{equation}\label{eq:L12_n}
\frac{1}{n^2} \sum_{i,j=1,\;i\neq j}^\infty \ex\left[ |D^n_jZ^n_i|^2 \right]<\infty.
\end{equation}
\begin{proposition}\label{prop:Skorokhod_L2}
 For every $n\in \N$, $\L^{1,2}_n\subset D(\delta^n)$ and, for $Z^n\in \L^{1,2}_n$,
 \begin{eqnarray}
\delta^n(Z^n)&=& \sum_{i=1}^\infty \ex[Z^n_i|\F^n_{-i}] \frac{\xi^n_i}{\sqrt n},\quad \textnormal{(strong }L^2(\Omega,\F,P)\textnormal{-convergence)},  \label{eq:L12_representation}\\
 \ex\left[\left(\delta^n(Z^n)\right)^2 \right]&=& \frac{1}{n} \sum_{i=1}^\infty \ex\left[ \ex\left[Z^n_i|\F^n_{-i} \right]^2 \right]
 +   \frac{1}{n^2} \sum_{i,j=1, \; i\neq j}^\infty \ex\left[(D^n_i Z_j^n) (D^n_j Z_i^n) \right]. \label{eq:L12_discrete}
 \end{eqnarray}
 In particular, in the context of Theorem \ref{thm:Skorokhod_convergence_1}, assertion (i) is equivalent to
 \begin{enumerate}
  \item[(i')] $\sup_{n\in \N} \frac{1}{n^2} \sum_{i,j=1, \; i\neq j}^\infty \ex\left[(D^n_i Z_j^n) (D^n_j Z_i^n) \right]<\infty$,
 \end{enumerate}
 if we additionally assume that $Z^n\in \L^{1,2}_n$ for every $n\in \N$.
\end{proposition}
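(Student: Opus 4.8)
The plan is to first establish a \emph{discrete It\^o isometry} for integrands supported on $\{1,\ldots,N\}$ and then pass to the limit $N\to\infty$ using the closedness of $\delta^n$ from Proposition \ref{prop:duality_discrete}. Write $u^n_i:=\ex[Z^n_i|\F^n_{-i}]$, so that by (\ref{eq:discrete_Skorokhod_finite}) the partial sum $S_N:=\delta^n(Z^n\eins_{[1,N]})=\sum_{i=1}^N u^n_i\,\xi^n_i/\sqrt n$. Since $u^n_i$ is $\F^n_{-i}$-measurable, hence independent of $\xi^n_i$, expanding $\ex[S_N^2]=\tfrac1n\sum_{i,j\le N}\ex[u^n_iu^n_j\xi^n_i\xi^n_j]$ gives on the diagonal $\tfrac1n\sum_{i\le N}\ex[(u^n_i)^2]$, which is the first term of (\ref{eq:L12_discrete}). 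For the off-diagonal terms $i\neq j$, I would set $\F^n_{-\{i,j\}}:=\sigma(\xi^n_k,\,k\in\N\setminus\{i,j\})$ and condition on it: since $\xi^n_i,\xi^n_j$ are independent of each other and of $\F^n_{-\{i,j\}}$, the conditional expectation of $u^n_iu^n_j\xi^n_i\xi^n_j$ factorizes, and the tower property together with the identities $\ex[u^n_i\xi^n_j|\F^n_{-\{i,j\}}]=\tfrac1{\sqrt n}\ex[D^n_jZ^n_i|\F^n_{-\{i,j\}}]$ (and symmetrically with $i,j$ swapped) turns each summand into $\tfrac1n\ex[(D^n_iZ^n_j)(D^n_jZ^n_i)]$. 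This yields the finite isometry
\begin{equation*}
\ex[S_N^2]=\frac1n\sum_{i=1}^N\ex[(u^n_i)^2]+\frac1{n^2}\sum_{\substack{i,j=1\\ i\neq j}}^N\ex[(D^n_iZ^n_j)(D^n_jZ^n_i)].
\end{equation*}

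Next I would let $N\to\infty$. The first sum is dominated by $\|Z^n\|_{L^2_n(\Omega\times\N)}^2$ via conditional Jensen and hence converges; the second converges absolutely, since Cauchy--Schwarz and relabeling bound $\tfrac1{n^2}\sum_{i\neq j}|\ex[(D^n_iZ^n_j)(D^n_jZ^n_i)]|$ by $\tfrac1{n^2}\sum_{i\neq j}\ex[(D^n_jZ^n_i)^2]$, which is finite by the defining condition (\ref{eq:L12_n}) of $\L^{1,2}_n$. Applying the same isometry to the increment $Z^n\eins_{[N+1,M]}$ shows that $\ex[(S_M-S_N)^2]$ equals the corresponding tails of these two convergent series, so $(S_N)$ is Cauchy and converges strongly in $L^2(\Omega,\F,P)$. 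Since $Z^n\eins_{[1,N]}\to Z^n$ in $L^2_n(\Omega\times\N)$ while $\delta^n(Z^n\eins_{[1,N]})=S_N$ converges, the closedness of $\delta^n$ (Proposition \ref{prop:duality_discrete}) gives $Z^n\in D(\delta^n)$ with $\delta^n(Z^n)=\lim_N S_N$, which is (\ref{eq:L12_representation}); letting $N\to\infty$ in the finite isometry then yields (\ref{eq:L12_discrete}).

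Finally, for the equivalence of (i) and (i') in the setting of Theorem \ref{thm:Skorokhod_convergence_1}, I would record that $\|Z^n\|_{L^2_n(\Omega\times\N)}^2=\ex[\int_0^\infty (Z^n_{\lceil ns\rceil})^2\,ds]$, which is uniformly bounded in $n$ because $(Z^n_{\lceil n\cdot\rceil})$ converges weakly and weakly convergent sequences are norm bounded. Consequently the first term on the right of (\ref{eq:L12_discrete}) is nonnegative and uniformly bounded, so by (\ref{eq:L12_discrete}) the quantity $\ex[|\delta^n(Z^n)|^2]$ is uniformly bounded if and only if the off-diagonal term is uniformly bounded above, i.e.\ (i)$\Leftrightarrow$(i').

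The main obstacle is the off-diagonal computation: one must carefully exploit the conditional independence structure under $\F^n_{-\{i,j\}}$ and match the two conditional expectations $\ex[u^n_i\xi^n_j|\F^n_{-\{i,j\}}]$ and $\ex[u^n_j\xi^n_i|\F^n_{-\{i,j\}}]$ to the discretized Malliavin derivatives $D^n_jZ^n_i$ and $D^n_iZ^n_j$ via the tower property, which is what produces the cross term $\ex[(D^n_iZ^n_j)(D^n_jZ^n_i)]$ rather than a pure sum of squares; everything else is bookkeeping with convergent series and the already established closedness of $\delta^n$.
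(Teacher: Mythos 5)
Your proof is correct and follows essentially the same route as the paper's: you compute the finite-horizon isometry for $\delta^n(Z^n\eins_{[1,N]})$, identify the off-diagonal cross terms with $\frac{1}{n}\ex[(D^n_iZ^n_j)(D^n_jZ^n_i)]$, use the $\L^{1,2}_n$-condition with Cauchy--Schwarz to obtain absolute convergence and hence Cauchy partial sums, invoke the closedness of $\delta^n$ from Proposition \ref{prop:duality_discrete}, and settle (i)$\Leftrightarrow$(i') via the uniform bound on the diagonal term coming from weak convergence. The only (cosmetic) difference is the bookkeeping of the cross terms: you condition on $\F^n_{-\{i,j\}}$ and factorize by conditional independence, whereas the paper iterates conditional expectations and applies its swap identity (\ref{eq:-i,-j}); these are two phrasings of the same Fubini-type consequence of the product structure of the noise.
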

\begin{proof}
 Fix $N_1<N_2\in \N$. Then,
 \begin{eqnarray*}
  &&\ex\left[\left(  \sum_{i=N_1}^{N_2} \ex[Z^n_i|\F^n_{-i}] \frac{\xi^n_i}{\sqrt{n}}\right)^2 \right]= \frac{1}{n}\sum_{i=N_1}^{N_2}  
  \ex\left[ \ex[Z^n_i|\F^n_{-i}]^2 \left(\xi^n_i\right)^2\right]\\ & +&\frac{1}{n} \sum_{i,j=N_1, \; i\neq j}^{N_2} 
  \ex \left[ \ex[Z^n_i|\F^n_{-i}]\ex[Z^n_j|\F^n_{-j}] \xi^n_i \xi^n_j \right]=(I)_{N_1,N_2}+(II)_{N_1,N_2}.
 \end{eqnarray*}
By the independence of the discrete-time noise $(\xi^n_i)_{i\in \N}$ and as the conditional expectation has norm 1,
\begin{align}
(I)_{1,N}=\frac{1}{n}\sum_{i=1}^{N}  
  \ex\left[ \ex[Z^n_i|\F^n_{-i}]^2 \right]\rightarrow \frac{1}{n}\sum_{i=1}^\infty 
  \ex\left[ \ex[Z^n_i|\F^n_{-i}]^2 \right]<\infty,\quad N\rightarrow \infty,\label{eq:I_Mconv} 
\end{align}
and $(I)_{N_1,N_2}\rightarrow 0$  as $N_1,N_2$ tend to infinity. 
In order to treat $(II)_{N_1,N_2}$, we first note 
that for  
any random variable  $X^n\in L^1(\Omega,\F^n,P)$ and $i\neq j\in \N$, by Fubini's theorem,
\begin{equation}\label{eq:-i,-j}
\ex\left[\left. \ex\left[ X^n|\F^n_{-i} \right]\right| \F^n_{-j}\right]=
\ex\left[\left. \ex\left[ X^n|\F^n_{-j} \right]\right| \F^n_{-i}\right].
\end{equation}
Hence, for $i\neq j\in \N$,
\begin{eqnarray*}
 && \ex \left[ \ex[Z^n_i|\F^n_{-i}]\ex[Z^n_j|\F^n_{-j}] \xi^n_i \xi^n_j \right]=
 \ex \left[ \ex[Z^n_i\xi^n_j|\F^n_{-i}]\ex[Z^n_j\xi^n_i|\F^n_{-j}]  \right] \nonumber\\
 &=&   \ex \left[ \ex\left[ \left.\ex[Z^n_i\xi^n_j|\F^n_{-i}]\right|\F^n_{-j}\right] Z^n_j\xi^n_i  \right]
 =   \ex \left[ \ex\left[ \left.\ex[Z^n_i\xi^n_j|\F^n_{-j}]\right|\F^n_{-i}\right] Z^n_j\xi^n_i  \right] \nonumber
 \\ &=&  \ex \left[ \ex[Z^n_i\xi^n_j|\F^n_{-j}]\ex[Z^n_j\xi^n_i|\F^n_{-i}]  \right]=\frac{1}{n} \ex\left[(D^n_iZ^n_j)(D^n_jZ^n_i) \right]. \label{eq:II_Mfubini}
\end{eqnarray*}
Consequently, by Young's inequality,
$$
 n \left|\ex \left[ \ex[Z^n_i|\F^n_{-i}]\ex[Z^n_j|\F^n_{-j}] \xi^n_i \xi^n_j \right]\right|\leq 
 \frac{1}{2} \ex \left[ (D^n_i Z^n_j)^2\right] + \frac{1}{2} \ex \left[ (D^n_j Z^n_i)^2\right]. 
$$
The $\L^{1,2}_n$-assumption, thus, ensures that
\begin{eqnarray*}
\lim_{N\rightarrow \infty}  (II)_{1,N} &=&  \frac{1}{n^2} \sum_{i,j=1, i \neq j}^\infty \ex\left[(D^n_i Z^n_j)(D^n_j Z^n_i) \right] < \infty
\end{eqnarray*}
and  $(II)_{N_1,N_2}\rightarrow 0$  as $N_1,N_2$ tend to infinity. Hence, by (\ref{eq:discrete_Skorokhod_finite}), the sequence $(\delta^n(Z^n\eins_{[1,N]}))_{N \in \N}$ 
is Cauchy in $L^2(\Omega,\F,P)$. By the closedness of the discrete Skorokhod integral, $Z^n\in D(\delta^n)$ and  we obtain $\L^{1,2}_n \subset D(\delta^n)$, (\ref{eq:L12_representation}) 
and (\ref{eq:L12_discrete}).
We finally suppose that the assumptions of Theorem \ref{thm:Skorokhod_convergence_1} are in force and that 
 $Z^n\in \L^{1,2}_n$ for every $n\in \N$. Then,
 $$
 \sup_{n\in \N}\frac{1}{n}\sum_{i=1}^\infty 
  \ex\left[ \ex[Z^n_i|\F^n_{-i}]^2 \right]<\infty,
 $$
 because of the assumed weak convergence of the sequence  $(Z^n_{\lceil n\cdot\rceil})_{n\in\N}$. Thus, the sequence 
 $(\delta^n(Z^n))_{n\in \N}$ is norm bounded in $L^2(\Omega,\F,P)$, if and only if (i') holds.
\end{proof}

As a consequence of the previous proposition, we obtain the following strong $L^2(\Omega,\F,P)$-convergence results 
to the Malliavin derivative.

\begin{theorem}\label{thm:D22}
 Suppose $(X^n)_{n\in\N}$ converges to $X$ strongly in $L^2(\Omega,\F,P)$. Moreover assume that
 $X^n\in \D^{2,2}_n$ for every $n\in \N$, i.e. 
 $$
 \frac{1}{n} \sum_{i=1}^\infty \ex\left[(D^n_iX)^2\right] +  \frac{1}{n^2} \sum_{i,j=1,\,i\neq j}^\infty \ex\left[(D^n_jD^n_iX)^2\right]<\infty.
 $$
 Then, the following assertions are equivalent:
\begin{itemize}
 \item[(i)]   $ \sup_{n\in \N}\left(\frac{1}{n} \sum_{i=1}^\infty \ex\left[(D^n_iX)^2\right] +  \frac{1}{n^2} \sum_{i,j=1,\,i\neq j}^\infty \ex\left[(D^n_jD^n_iX)^2\right]\right)<\infty$ .
 \item[(ii)]  $X\in \D^{1,2}$, $DX \in D(\delta)$, $(D^n_{\lceil n\cdot \rceil}X^n)_{n\in \N}$ converges to $DX$ strongly in 
 $L^2(\Omega\times[0,\infty))$, and $(\delta^n(D^nX^n))_{n\in \N}$ converges to $\delta(DX)$ weakly in $L^2(\Omega,\F,P)$.
 \end{itemize}
\end{theorem}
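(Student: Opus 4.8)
The plan is to exploit the framework already established, in particular Theorem \ref{thm_Malliavin_derivative_convergence_1} for the derivative part, Proposition \ref{prop:Skorokhod_L2} for the Skorokhod part, and the duality of Proposition \ref{prop:duality_discrete} which links them. The direction `$(ii)\Rightarrow(i)$' should be routine: weak convergence of $(D^n_{\lceil n\cdot\rceil}X^n)$ and of $(\delta^n(D^nX^n))$ forces both sequences to be norm bounded by the uniform boundedness principle \cite[Theorem V.1.1]{Yosida}, and in view of the identity $\frac{1}{n}\sum_{i=1}^\infty\ex[(D^n_iX^n)^2]=\int_0^\infty\ex[(D^n_{\lceil ns\rceil}X^n)^2]ds$ together with formula (\ref{eq:L12_discrete}), these two norm bounds are exactly the two summands appearing in (i). So the substance is the forward implication.

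For `$(i)\Rightarrow(ii)$' I would proceed in stages. First, observe that the $\D^{2,2}_n$-hypothesis says precisely that $Z^n:=D^nX^n\in \L^{1,2}_n$ for every $n$ (the condition (\ref{eq:L12_n}) is the finiteness of $\frac{1}{n^2}\sum_{i\neq j}\ex[(D^n_jD^n_iX^n)^2]$), so $D^nX^n\in D(\delta^n)$ by Proposition \ref{prop:Skorokhod_L2} and $\delta^n(D^nX^n)$ is well defined. Next, since $(X^n)$ converges strongly, it converges weakly, and the first summand in (i) is bounded, so assumption (i) of Theorem \ref{thm_Malliavin_derivative_convergence_1} holds; that theorem then yields $X\in\D^{1,2}$ and the weak convergence of $(D^n_{\lceil n\cdot\rceil}X^n)$ to $DX$ in $L^2(\Omega\times[0,\infty))$. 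To upgrade this to strong convergence I would verify the norm-convergence criterion of Theorem \ref{thm_S_transform_process_convergence_discrete}, i.e. that $\frac{1}{n}\sum_i\ex[(D^n_iX^n)^2]\to\ex[\int_0^\infty (D_sX)^2ds]$. The natural route is to compute $\frac{1}{n}\sum_i\ex[(D^n_iX^n)^2]=\ex[\delta^n(D^nX^n)X^n]$ via duality (Proposition \ref{prop:duality_discrete}), combine the strong convergence $X^n\to X$ with the weak convergence of $\delta^n(D^nX^n)$ established below, and match the limit against the continuous identity $\ex[\delta(DX)X]=\ex[\int_0^\infty(D_sX)^2ds]$.

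Finally, for the Skorokhod part I would apply Theorem \ref{thm:Skorokhod_convergence_1} to $Z^n=D^nX^n$: the weak convergence $(D^n_{\lceil n\cdot\rceil}X^n)\to DX$ is now known, and assumption (i) of that theorem, namely $\sup_n\ex[|\delta^n(D^nX^n)|^2]<\infty$, follows from (\ref{eq:L12_discrete}) because both summands on its right-hand side are controlled by (i) (the cross-term via the Young-type estimate in the proof of Proposition \ref{prop:Skorokhod_L2}, which bounds $\frac{1}{n^2}\sum_{i\neq j}\ex[(D^n_iZ^n_j)(D^n_jZ^n_i)]$ by $\frac{1}{n^2}\sum_{i\neq j}\ex[(D^n_jD^n_iX^n)^2]$). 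Theorem \ref{thm:Skorokhod_convergence_1} then gives $DX\in D(\delta)$ and weak convergence of $(\delta^n(D^nX^n))$ to $\delta(DX)$. I expect the main obstacle to be the strong-convergence upgrade for the derivative: one must ensure the limit of $\frac{1}{n}\sum_i\ex[(D^n_iX^n)^2]$ genuinely equals the continuous $L^2$-norm of $DX$ rather than merely being bounded, and this requires carefully chaining the duality identity with the already-established weak convergences so that no mass is lost in the limit. The delicate point is that (i) only bounds, but does not a priori identify, these second-order quantities, so the matching against the continuous Skorokhod identity $\ex[\delta(DX)X]$ is what forces the norms to converge.
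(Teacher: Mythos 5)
Your proposal follows essentially the same route as the paper's proof: Theorem \ref{thm_Malliavin_derivative_convergence_1} for the derivative part, Theorem \ref{thm:Skorokhod_convergence_1} together with Proposition \ref{prop:Skorokhod_L2} for the Skorokhod part, and the duality of Proposition \ref{prop:duality_discrete} chained with the continuous-time duality $\ex[\delta(DX)X]=\int_0^\infty\ex[(D_tX)^2]\,dt$ to upgrade the weak convergence of the derivatives to strong convergence. The ordering issue you flag (the strong upgrade uses the weak Skorokhod convergence ``established below'') is resolved in the paper in exactly the way you suggest: it first proves that (i) is equivalent to the all-weak statement (ii'), and only then performs the duality upgrade.

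There is, however, one point where your argument has a genuine gap, and it is precisely the observation with which the paper opens its proof: by (\ref{eq:-i,-j}) the discretized derivatives commute, $D^n_jD^n_iX^n=D^n_iD^n_jX^n$, so that with $Z^n=D^nX^n$ the cross-term in (\ref{eq:L12_discrete}) satisfies $(D^n_iZ^n_j)(D^n_jZ^n_i)=(D^n_jD^n_iX^n)^2$ \emph{identically}; it is therefore a sum of squares equal to the second summand of (i). You instead invoke only the one-sided Young-type estimate $\frac{1}{n^2}\sum_{i\neq j}\ex[(D^n_iZ^n_j)(D^n_jZ^n_i)]\leq \frac{1}{n^2}\sum_{i\neq j}\ex[(D^n_jD^n_iX^n)^2]$. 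That inequality suffices for `$(i)\Rightarrow(ii)$', but it does not support your claim in `$(ii)\Rightarrow(i)$' that the two norm bounds ``are exactly the two summands appearing in (i)'': from $\sup_n\ex[|\delta^n(D^nX^n)|^2]<\infty$ and boundedness of the first summand, formula (\ref{eq:L12_discrete}) only yields boundedness of the \emph{signed} sum $\frac{1}{n^2}\sum_{i\neq j}\ex[(D^n_iD^n_jX^n)(D^n_jD^n_iX^n)]$, and without the commutativity identity a bounded signed sum of products does not control the sum of squares $\frac{1}{n^2}\sum_{i\neq j}\ex[(D^n_jD^n_iX^n)^2]$. (The first term of (\ref{eq:L12_discrete}) is unproblematic: since $D^n_iX^n$ is $\F^n_{-i}$-measurable, $\ex[D^n_iX^n|\F^n_{-i}]=D^n_iX^n$, so it equals the first summand of (i).) Once you add the one-line commutativity observation from (\ref{eq:-i,-j}), your argument closes and coincides with the paper's.
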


\begin{remark}
 Recall that $L=-\delta\circ D$ is the infinitesimal generator of the Ornstein-Uhlenbeck semigroup, see \cite[Section 1.4]{Nualart}, and is sometimes called 
Ornstein-Uhlenbeck operator (cf. also \cite[Example 4.7]{Janson}). So the previous theorem provides, at the same time, 
sufficient conditions for the strong convergence to the Malliavin derivative and the weak convergence to the Ornstein-Uhlenbeck operator.
\end{remark}

\begin{proof}
 Let $Z^n_i=D^n_i X^n$. Then, $X^n\in \D^{2,2}_n$ implies $Z^n\in \L^{1,2}_n$. Note that, for $i\neq j$, by (\ref{eq:-i,-j}),
 $$
 D^n_j Z^n_i= D^n_j D^n_i X= D^n_i D^n_j X= D^n_i Z^n_j, 
 $$
 i.e. $(D^n_jZ^n_i)(D^n_i Z^n_j)=  (D^n_j D^n_i X)^2$. Hence, by Theorem \ref{thm_Malliavin_derivative_convergence_1} and 
 Theorem \ref{thm:Skorokhod_convergence_1} in conjunction with Proposition \ref{prop:Skorokhod_L2}, assertion (i) is equivalent to
 \begin{itemize}
 \item[(ii')]  $X\in \D^{1,2}$, $DX \in D(\delta)$, $(D^n_{\lceil n\cdot \rceil}X^n)_{n\in \N}$ converges to $DX$ weakly in 
 $L^2(\Omega\times[0,\infty))$, and $(\delta^n(D^nX^n))_{n\in \N}$ converges to $\delta(DX)$ weakly in $L^2(\Omega,\F,P)$.
 \end{itemize}
 So we only need to show that under (ii') the convergence of $(D^n_{\lceil n\cdot \rceil}X^n)_{n\in \N}$ to $DX$ holds true in the strong topology.
 However, by the duality relation in Proposition \ref{prop:duality_discrete}, the weak $L^2(\Omega,\F,P)$-convergence 
 of $(\delta^n(D^nX^n))_{n\in \N}$ and the strong $L^2(\Omega,\F,P)$-convergence of $(X^n)_{n\in \N}$,
 $$
  \int_0^\infty \ex[(D^n_{\lceil n\cdot \rceil}X^n)^2]dt=\ex[\delta^n(D^nX^n) X^n]\rightarrow \ex[\delta(DX) X ]= \int_0^\infty \ex[(D_tX)^2]dt,  
 $$
 making use of the continuous time duality  between Skorokhod integral and Malliavin derivative in the last step.
\end{proof}

The analogous result for the Skorokhod integral reads as follows.

\begin{theorem}\label{thm:L22}
Suppose $(Z^n_{\lceil n\cdot\rceil})_{n\in\N}$ converges strongly to $Z$ in $L^2(\Omega\times [0,\infty))$ and  assume that $Z^n \in \L^{2,2}_n$, i.e., for every $n\in \N$,
$$
\frac{1}{n^2} \sum_{i,j=1,\;i\neq j}^\infty \ex\left[ |D^n_iZ^n_j|^2 \right]+\frac{1}{n^3} \sum_{i,j,k=1,\; |\{i,j,k\}|=3}^\infty \ex\left[ |D^n_i D^n_j Z^n_k|^2 \right]<\infty.
$$
Then the following assertions are equivalent:
\begin{itemize}
 \item[(i)]  $\sup_{n\in \N} \left(\frac{1}{n^2} \sum_{i,j=1, \; i\neq j}^\infty \ex\left[(D^n_i Z_j^n) (D^n_j Z_i^n) \right]\right)<\infty$ and 
$$
\sup_{n\in \N} \left(\frac{1}{n^2} \sum_{i,j=1, \; i\neq j}^\infty \ex\left[\ex[D^n_i Z_j^n|\F^n_{-j}]^2 \right]+ 
\frac{1}{n^3} \sum_{i,j,k=1,\; |\{i,j,k\}|=3} \ex\left[ (D^n_i D^n_j Z^n_k) (D^n_k D^n_j Z^n_i) \right]  \right)<\infty.
$$
 \item[(ii)] $Z \in D(\delta)$, $\delta(Z) \in \D^{1,2}$, $(\delta^n(Z^n))_{n \in \N}$ converges to $\delta(Z)$ strongly in $L^2(\Omega,\F,P)$ 
and $(D^n_{\lceil n \cdot \rceil} \delta^n(Z^n))_{n \in \N}$ converges  to $D \delta (Z)$ weakly in $L^2(\Omega \times [0,\infty))$.
\end{itemize}
\end{theorem}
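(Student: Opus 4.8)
The plan is to mirror the proof of Theorem \ref{thm:D22}, interchanging the roles of the discretized Malliavin derivative and the discrete Skorokhod integral. The backbone is the discrete commutation rule
\[
D^n_k \delta^n(Z^n) = \ex[Z^n_k \mid \F^n_{-k}] + \delta^n\big((D^n_k Z^n_i)_{i \in \N \setminus \{k\}}\big),
\]
which I would establish first. Starting from the representation \eqref{eq:L12_representation} of $\delta^n(Z^n)$, valid on $\L^{1,2}_n \supset \L^{2,2}_n$, one applies $D^n_k = \sqrt{n}\,\ex[\xi^n_k\,\cdot\mid\F^n_{-k}]$ term by term: the diagonal index $i=k$ produces $\ex[Z^n_k \mid \F^n_{-k}]$ because $\xi^n_k$ has unit variance and is independent of $\F^n_{-k}$, while the Fubini-type identity \eqref{eq:-i,-j} turns the off-diagonal indices into $\ex[D^n_k Z^n_i \mid \F^n_{-i}]\,\xi^n_i/\sqrt{n}$, whose sum is the announced discrete Skorokhod integral of $(D^n_k Z^n_i)_i$ in the $i$-variable. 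The $\L^{2,2}_n$-hypothesis guarantees that $(D^n_k Z^n_i)_i \in \L^{1,2}_n$ for each $k$, so Proposition \ref{prop:Skorokhod_L2} makes the right-hand side meaningful.

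Second, I would compute $\frac1n\sum_k \ex[(D^n_k\delta^n(Z^n))^2]$ by squaring the commutation rule. The variance formula \eqref{eq:L12_discrete} applied to $(D^n_k Z^n_i)_i$ shows, after summation over $k$ and relabelling of indices (using $D^n_iD^n_j = D^n_jD^n_i$, which follows from \eqref{eq:-i,-j}), that $\frac1n\sum_k\ex[(\delta^n((D^n_k Z^n_i)_{i\neq k}))^2]$ is \emph{exactly} the expression inside the second supremum in (i). The remaining contributions are the diagonal term $\frac1n\sum_k\ex[\ex[Z^n_k \mid \F^n_{-k}]^2]$, which is bounded because $(Z^n_{\lceil n\cdot\rceil})$ converges strongly and conditional expectation is a contraction, and a cross term which, by duality in the $i$-variable and Cauchy--Schwarz, is dominated by the square roots of the other two pieces. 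Since $\frac1n\sum_k\ex[(\delta^n((D^n_k Z^n_i)_{i\neq k}))^2]$ is a sum of squared $L^2$-norms it is nonnegative; this sign, combined with the Cauchy--Schwarz bound on the cross term, lets me pass freely between boundedness of $\frac1n\sum_k\ex[(D^n_k\delta^n(Z^n))^2]$ and the second supremum condition, and it also yields $\delta^n(Z^n)\in\D^{1,2}_n$ for each $n$.

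With these computations in hand the equivalence follows from the earlier theorems. Under (i), the first supremum condition together with Proposition \ref{prop:Skorokhod_L2} is precisely condition (i) of Theorem \ref{thm:Skorokhod_convergence_1}, which gives $Z\in D(\delta)$ and weak convergence $\delta^n(Z^n)\to\delta(Z)$. Applying Theorem \ref{thm_Malliavin_derivative_convergence_1} to $X^n=\delta^n(Z^n)$ --- whose hypotheses are the weak convergence just obtained and the uniform bound from the second step --- yields $\delta(Z)\in\D^{1,2}$ together with weak convergence $D^n_{\lceil n\cdot\rceil}\delta^n(Z^n)\to D\delta(Z)$. For the converse I read both supremum conditions off the convergences in (ii): the first via Proposition \ref{prop:Skorokhod_L2} from the (strongly convergent, hence norm-bounded) sequence $(\delta^n(Z^n))$, and the second via the nonnegativity/Cauchy--Schwarz argument above, which turns the norm bound on the weakly convergent $(D^n_{\lceil n\cdot\rceil}\delta^n(Z^n))$ into the required bound.

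Finally, I would upgrade the weak convergence of $(\delta^n(Z^n))$ to strong convergence exactly as at the end of the proof of Theorem \ref{thm:D22}. The duality relation of Proposition \ref{prop:duality_discrete} with $X=\delta^n(Z^n)\in\D^{1,2}_n$ gives
\[
\ex[(\delta^n(Z^n))^2] = \frac1n\sum_{i=1}^\infty \ex[Z^n_i\, D^n_i\delta^n(Z^n)] = \big\langle Z^n_{\lceil n\cdot\rceil},\, D^n_{\lceil n\cdot\rceil}\delta^n(Z^n)\big\rangle_{L^2(\Omega\times[0,\infty))},
\]
and since $Z^n_{\lceil n\cdot\rceil}\to Z$ strongly while $D^n_{\lceil n\cdot\rceil}\delta^n(Z^n)\to D\delta(Z)$ weakly, the inner product converges to $\langle Z, D\delta(Z)\rangle = \ex[(\delta(Z))^2]$ by the continuous-time duality between $\delta$ and $D$. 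Convergence of the norms together with weak convergence then yields strong convergence of $(\delta^n(Z^n))$. I expect the main obstacle to be the bookkeeping in the second step: proving the commutation rule rigorously (including integrability of the off-diagonal Skorokhod integral, justified by $\L^{2,2}_n$) and matching the squared terms, after relabelling, precisely with the two quantities appearing in the second supremum condition.
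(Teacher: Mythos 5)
Your proposal is correct and follows essentially the same route as the paper: the commutation rule you derive is exactly the paper's Proposition \ref{prop:Malliavin_Skorokhod} (proved there just as you sketch, from \eqref{eq:L12_representation} and \eqref{eq:-i,-j}), and the paper likewise reduces (i) to the intermediate conditions $\sup_{n}\ex[|\delta^n(Z^n)|^2]<\infty$ and $\sup_{n}\frac{1}{n}\sum_{i=1}^\infty\ex[|D^n_i\delta^n(Z^n)|^2]<\infty$, invokes Theorems \ref{thm:Skorokhod_convergence_1} and \ref{thm_Malliavin_derivative_convergence_1}, and upgrades weak to strong convergence by the identical discrete/continuous duality computation. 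The only difference is expository: you spell out the cross-term, nonnegativity, and Cauchy--Schwarz bookkeeping that the paper leaves implicit in its appeal to Propositions \ref{prop:Skorokhod_L2} and \ref{prop:Malliavin_Skorokhod}.
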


As a preparation we explain how to compute the dicretized Malliavin derivative of a discrete Skorokhod integral, which is analogous 
to the continuous-time situation, cp. e.g. \cite[Proposition 1.3.8]{Nualart}.

\begin{proposition}\label{prop:Malliavin_Skorokhod}
Suppose $Z^n \in \L^{1,2}_n$. Then $(D^n_i Z^n) \eins_{\N\setminus\{i\}} \in D(\delta^n)$ for every $i\in \N$, and
$$
D^n_i \delta^n(Z^n)= \ex[Z^n_i|\F^n_{-i}]+  \delta^n(D^n_i Z^n \eins_{\N\setminus\{i\}}).
$$
\end{proposition}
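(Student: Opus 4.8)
The plan is to start from the explicit series representation of the discrete Skorokhod integral. Since $Z^n \in \L^{1,2}_n \subset D(\delta^n)$, Proposition \ref{prop:Skorokhod_L2} gives $\delta^n(Z^n) = \sum_{k=1}^\infty \ex[Z^n_k|\F^n_{-k}]\frac{\xi^n_k}{\sqrt n}$ with strong $L^2(\Omega,\F,P)$-convergence of the partial sums. As $D^n_i$ is a continuous linear operator on $L^2(\Omega,\F,P)$ for fixed $i$, I may interchange $D^n_i$ with the limit and write $D^n_i \delta^n(Z^n) = \lim_{N\to\infty}\sum_{k=1}^N D^n_i\!\left(\ex[Z^n_k|\F^n_{-k}]\frac{\xi^n_k}{\sqrt n}\right)$, thereby reducing the problem to a term-by-term computation.

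Next I would compute each summand $D^n_i(\ex[Z^n_k|\F^n_{-k}]\frac{\xi^n_k}{\sqrt n}) = \ex[\xi^n_i\xi^n_k\ex[Z^n_k|\F^n_{-k}]\mid\F^n_{-i}]$, distinguishing the diagonal and off-diagonal cases. For $k=i$, the factor $\ex[Z^n_i|\F^n_{-i}]$ is $\F^n_{-i}$-measurable and $(\xi^n_i)^2$ is independent of $\F^n_{-i}$ with unit mean, so this term collapses to $\ex[Z^n_i|\F^n_{-i}]$, which is exactly the first term on the right-hand side of the claimed identity. For $k\neq i$, the factor $\xi^n_k$ is $\F^n_{-i}$-measurable and may be pulled out; the remaining inner conditional expectation is rewritten using that $\xi^n_i$ is $\F^n_{-k}$-measurable and then the Fubini-type exchange of conditioning (\ref{eq:-i,-j}), which converts $D^n_i\ex[Z^n_k|\F^n_{-k}]$ into $\ex[D^n_i Z^n_k|\F^n_{-k}]$. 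This yields the off-diagonal summand $\ex[D^n_i Z^n_k|\F^n_{-k}]\frac{\xi^n_k}{\sqrt n}$, so that $D^n_i\delta^n(Z^n) = \ex[Z^n_i|\F^n_{-i}] + \lim_{N\to\infty}\sum_{k=1,k\neq i}^N \ex[D^n_i Z^n_k|\F^n_{-k}]\frac{\xi^n_k}{\sqrt n}$ in $L^2(\Omega,\F,P)$.

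It then remains to identify the surviving series as the discrete Skorokhod integral of $W^n := (D^n_i Z^n)\eins_{\N\setminus\{i\}}$ and, in the process, to establish $W^n \in D(\delta^n)$. Here I would first observe that $W^n \in L^2_n(\Omega\times\N)$: the defining condition (\ref{eq:L12_n}) of $\L^{1,2}_n$ bounds $\frac{1}{n^2}\sum_{k\neq i}\ex[(D^n_i Z^n_k)^2]$, whence $\frac{1}{n}\sum_{k}\ex[(W^n_k)^2]<\infty$. By the finite-support formula (\ref{eq:discrete_Skorokhod_finite}), $\delta^n(W^n\eins_{[1,N]}) = \sum_{k\le N,k\neq i}\ex[D^n_i Z^n_k|\F^n_{-k}]\frac{\xi^n_k}{\sqrt n}$, which is precisely the truncated series above and hence converges in $L^2(\Omega,\F,P)$. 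Since $W^n\eins_{[1,N]}\to W^n$ in $L^2_n(\Omega\times\N)$, the closedness of $\delta^n$ (Proposition \ref{prop:duality_discrete}) gives $W^n\in D(\delta^n)$ with $\delta^n(W^n)$ equal to this limit, which completes the identity.

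I expect the main obstacle to be the off-diagonal bookkeeping: correctly combining the measurability of $\xi^n_i$ and $\xi^n_k$ with the exchange-of-conditioning identity (\ref{eq:-i,-j}) to convert $D^n_i\ex[Z^n_k|\F^n_{-k}]$ into $\ex[D^n_i Z^n_k|\F^n_{-k}]$, and, crucially, ensuring that membership of $W^n$ in $D(\delta^n)$ is obtained via closedness rather than via $\L^{1,2}_n$, since the relevant second-order moment condition on $D^n_j D^n_i Z^n_k$ is \emph{not} part of the hypotheses of this proposition.
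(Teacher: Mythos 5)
Your proposal is correct and follows essentially the same route as the paper's proof: starting from the series representation \eqref{eq:L12_representation}, applying $D^n_i$ term by term via its continuity, computing the diagonal term and using the exchange-of-conditioning identity \eqref{eq:-i,-j} for the off-diagonal terms, and concluding membership of $(D^n_i Z^n)\eins_{\N\setminus\{i\}}$ in $D(\delta^n)$ by closedness of $\delta^n$. The only difference is that you spell out the closedness step (checking $W^n\in L^2_n(\Omega\times\N)$ and identifying the partial sums via \eqref{eq:discrete_Skorokhod_finite}) in more detail than the paper, which simply invokes it.
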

\begin{proof}
 By (\ref{eq:L12_representation}) and the continuity of $D^n_i$,
$$
D^n_i \delta^n(Z^n)= D^n_i \left(\ex[Z^n_i|\F^n_{-i}] \frac{\xi^n_i}{\sqrt n}\right) +\sum_{j=1,\; j\neq i}^\infty D^n_i \left(\ex[Z^n_j|\F^n_{-j}] \frac{\xi^n_j}{\sqrt n}\right),
$$
(including the strong convergence of the series on the right-hand side in $L^2(\Omega,\F,P)$).
By (\ref{eq:-i,-j}), for $i \neq j$,
$$
\ex[\xi^n_i \ex[Z^n_j|\F^n_{-j}] \xi^n_j|\F^n_{-i}] = \ex[\ex[\xi^n_i Z^n_j|\F^n_{-j}] |\F^n_{-i}] \xi^n_j = \ex[\ex[\xi^n_i Z^n_j|\F^n_{-i}] |\F^n_{-j}] \xi^n_j.
$$
Moreover,
$$
\ex[(\xi^n_i)^2 \ex[Z^n_i|\F^n_{-i}]|\F^n_{-i}]=  \ex[Z^n_i|\F^n_{-i}].
$$
Hence,
$$
D^n_i \delta^n(Z^n)= \ex[Z^n_i|\F^n_{-i}] +\sum_{j=1,\; j\neq i}^\infty \ex[D^n_i Z^n_j|\F^n_{-j}] \frac{\xi^n_j}{\sqrt n},
$$
and the closedness of the discrete Skorokhod integral concludes.
\end{proof}

\begin{proof}[Proof of Theorem \ref{thm:L22}] 
The $\L^{2,2}_n$-assumption guarantees that, for every $i\in \N$, $(D^n_i Z^n) \eins_{\N\setminus\{i\}} \in \L^{1,2}_n$. As $(Z^n_{\lceil n \cdot \rceil})_{n\in \N}$ is norm bounded
 in $L^2(\Omega\times [0,\infty))$ by the assumed 
strong convergence to $Z$, we observe in view of Propositions \ref{prop:Skorokhod_L2} and \ref{prop:Malliavin_Skorokhod} that (i) is equivalent to
\begin{itemize}
 \item[(i')] $\sup_{n\in \N} \ex[|\delta^n(Z^n)|^2]<\infty$ and $\sup_{n\in \N} \frac{1}{n} \sum_{i=1}^\infty \ex[|D^n_i\delta^n(Z^n)|^2]<\infty$.
 \end{itemize}
Thanks to Theorems  \ref{thm:Skorokhod_convergence_1} and \ref{thm_Malliavin_derivative_convergence_1}, assertion (i') is equivalent to
\begin{itemize}
 \item[(ii')] $Z\in D(\delta)$, $\delta(Z) \in \D^{1,2}$, $(\delta^n(Z^n))_{n\in \N}$ converges weakly to $\delta(Z)$ in 
 $L^2(\Omega,\mathcal{F},P)$, and $(D^n\delta^n(Z^n))_{n\in \N}$ converges to $D \delta(Z)$ weakly in $L^2(\Omega \times [0,\infty))$.
 \end{itemize}
Due to the strong convergence of $(Z^n_{\lceil n \cdot \rceil})_{n \in \N}$ to $Z$ and the weak convergence of $(D^n_{\lceil n \cdot \rceil} \delta^n(Z^n))_{n \in \N}$ to 
$D(\delta(Z))$, 
the continuous time duality between Skorokhod integral and Malliavin derivative and its discrete  time counterpart in Proposition \ref{prop:duality_discrete} imply
$$
\|\delta^n(Z^n)\|^2_{L^2(\Omega,\mathcal{F},P)} = \int_{0}^{\infty}\ex[Z^n_{\lceil n s \rceil} D^n_{\lceil n s \rceil} \delta^n(Z^n) ]ds \rightarrow \int_{0}^{\infty}\ex[Z_s D_s \delta(Z) ] ds = \|\delta(Z)\|^2_{L^2(\Omega,\mathcal{F},P)}.
$$
Hence we obtain the convergence of $(\delta^n(Z^n))_{n\in \N}$ to $\delta(Z)$ in the strong topology, i.e., assertion (ii') is equivalent to assertion (ii).
\end{proof}

\section{Strong and weak $L^2$-approximation of the It\^o integral and the Clark-Ocone derivative}
 \label{section:Ito}

In this section, we first specialize the approximation result for the Skorokhod integral to predictable integrands. In this way,
we obtain necessary and sufficient conditions for strong and weak  $L^2$-convergence of discrete It\^o integrals with respect to the noise
$(\xi^n_i)_{i \in \N}$ to It\^o integrals with respect to the Brownian motion $B$. Then, we discuss strong and weak $L^2$-approximations 
to the Clark-Ocone derivative, which provides the predictable integral representation of a random variable in $L^2(\Omega,\F,P)$
with respect to the Brownian motion $B$.

Suppose $Z^n \in L^2_{n}(\Omega \times \N)$ is predictable with respect to $(\F^n_i)_{i\in \N}$, i.e., for every $i\in \N$, 
$Z^n_i$ is measurable with respect to
$\F^n_{i-1}=\sigma(\xi^n_1,\ldots,\xi^n_{i-1})$. Then,
$$
\delta^n(Z^n)=\sum_{i=1}^\infty Z^n_i \frac{\xi^n_i}{\sqrt n}=:\int Z^n dB^n,
$$
which means that the discrete Skorokhod integral reduces to the discrete It\^o integral. Analogously, the Skorokhod integral $\delta(Z)$
is well-known to coincide with the It\^o integral $\int_0^\infty Z_s dB_s$, when $Z \in L^2(\Omega\times [0,\infty))$ is predictable
with respect to the augmented Brownian filtration $(\F_t)_{t\in [0,\infty)}$, see, e.g. \cite[Theorem 7.41]{Janson}.

In this case of predictable integrands, the approximation theorem for Skorokhod integrals (Theorem \ref{thm:Skorokhod_convergence_1})
can be improved as follows.

\begin{theorem}\label{thm:Ito_integral}
 Suppose $Z\in L^2(\Omega\times [0,\infty))$ is predictable with respect to the augmented Brownian filtration $(\F_t)_{t\in [0,\infty)}$,
 and, for every $n \in \N$, $Z^n\in L^2_{n}(\Omega \times \N)$ is predictable with respect to $(\F^n_i)_{i\in \N}$. 
 Then, the following are equivalent:
 \begin{enumerate}
  \item[(i)] $(Z^n_{\lceil n\cdot\rceil})_{n\in\N}$ converges to $Z$ strongly (resp. weakly) in $L^2(\Omega\times [0,\infty))$.
  \item[(ii)] The sequence of discrete It\^o integrals 
  $
  \left(  \int Z^n dB^n \right)_{n\in \N} 
  $
  converges strongly (resp. weakly) in $L^2(\Omega,\F,P)$ to $\int_0^\infty Z_s dB_s$. 
 \end{enumerate}
\end{theorem}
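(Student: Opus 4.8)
The plan is to reduce everything to the It\^o isometry, which turns both the discrete and the continuous It\^o integral into isometric embeddings and thereby lets convergence be transported in either direction. First I would record the two isometries. For predictable $Z^n$, the variable $Z^n_i$ is $\F^n_{i-1}$-measurable and hence independent of $\xi^n_i$; since $\ex[\xi^n_i]=0$ and $\ex[(\xi^n_i)^2]=1$, the cross terms vanish and
\[
\ex\left[\left(\int Z^n dB^n\right)^2\right]=\frac{1}{n}\sum_{i=1}^\infty \ex[(Z^n_i)^2]=\|Z^n_{\lceil n\cdot\rceil}\|^2_{L^2(\Omega\times[0,\infty))},
\]
while the classical It\^o isometry gives $\ex[(\int_0^\infty Z_s\,dB_s)^2]=\|Z\|^2_{L^2(\Omega\times[0,\infty))}$. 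In particular, the norm of an integrand always equals the norm of its integral on each side. I would also recall from the discussion preceding the statement that, for predictable integrands, $\delta^n(Z^n)=\int Z^n dB^n$ and $\delta(Z)=\int_0^\infty Z_s\,dB_s$, so that Theorem \ref{thm:Skorokhod_convergence_1} is available.

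For the weak case, the direction $(i)\Rightarrow(ii)$ follows almost directly: weak convergence of $(Z^n_{\lceil n\cdot\rceil})$ forces $\sup_{n\in\N}\|Z^n_{\lceil n\cdot\rceil}\|^2<\infty$, hence $\sup_{n\in\N}\ex[|\delta^n(Z^n)|^2]<\infty$ by the isometry, and Theorem \ref{thm:Skorokhod_convergence_1} then yields $Z\in D(\delta)$ together with weak convergence of the integrals to $\delta(Z)=\int_0^\infty Z_s\,dB_s$. The converse $(ii)\Rightarrow(i)$ is the crux and I would handle it by a subsequence argument. Weak convergence of the integrals gives a norm bound on them, hence on $(Z^n_{\lceil n\cdot\rceil})$ via the isometry, so this sequence has weakly convergent subsequences. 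If $\tilde Z$ is the weak limit along one of them, then $\tilde Z$ is again predictable, because predictable processes form a closed (therefore weakly closed) subspace of $L^2(\Omega\times[0,\infty))$; applying the already established $(i)\Rightarrow(ii)$ along the subsequence identifies the limit of the integrals as $\int_0^\infty \tilde Z_s\,dB_s$, which must coincide with the full-sequence limit $\int_0^\infty Z_s\,dB_s$. Injectivity of the It\^o integral on predictable integrands (the isometry once more) forces $\tilde Z=Z$, so every weakly convergent subsequence has the same limit and the whole bounded sequence converges weakly to $Z$.

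The strong statements then follow from the Hilbert-space fact that weak convergence together with convergence of norms is equivalent to strong convergence. For $(i)\Rightarrow(ii)$ in the strong case, strong convergence of the integrands gives weak convergence of the integrals by the weak case, while convergence of the integrand norms transfers through the two isometries to convergence of the integral norms; together these give strong convergence of the integrals. The argument for $(ii)\Rightarrow(i)$ in the strong case is entirely symmetric. I expect the only genuine obstacle to be the weak $(ii)\Rightarrow(i)$ step, and within it the two points that make the subsequence argument work: that a weak limit of predictable integrands is still predictable, and that the limit is pinned down uniquely. Both rest on structural facts --- weak-closedness of the predictable subspace and injectivity of the It\^o integral --- rather than on any delicate estimate, so once the isometry is in place the remainder is routine.
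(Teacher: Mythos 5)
Your reduction to the two It\^o isometries and your treatment of (i) $\Rightarrow$ (ii) match the paper's proof: the discrete isometry gives the norm bound, Theorem \ref{thm:Skorokhod_convergence_1} gives weak convergence of the integrals, and the pair of isometries upgrades weak to strong. The gap is in your weak (ii) $\Rightarrow$ (i) step, precisely where you claim the subsequential weak limit $\tilde Z$ is predictable ``because predictable processes form a closed, hence weakly closed, subspace of $L^2(\Omega\times[0,\infty))$''. Weak closedness of the subspace of $(\F_t)$-predictable processes is true, but it is of no use here, because the approximating processes $Z^n_{\lceil n\cdot\rceil}$ do \emph{not} lie in that subspace: $Z^n_i$ is $\F^n_{i-1}$-measurable, and the discrete $\sigma$-fields $\F^n_i=\sigma(\xi^n_1,\ldots,\xi^n_i)$ are in general not contained in $\F_{i/n}$ --- under the Skorokhod embedding (\ref{eq:Skorokhod_embedding}), $\xi^n_i$ is $\F_{\tau^n_i}$-measurable and $\tau^n_i$ may exceed $i/n$, and the paper assumes only (\ref{eq_Donsker_embedding}), not any compatibility of the discrete filtrations with $(\F_t)$. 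So nothing places the sequence, and hence its weak limit, in the predictable subspace.

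This is fatal to your identification step, not a cosmetic omission. Without predictability of $\tilde Z$, all you can extract (via Theorem \ref{thm:Skorokhod_convergence_1} along the subsequence) is $\tilde Z\in D(\delta)$ and $\delta(\tilde Z)=\delta(Z)=\int_0^\infty Z_s\,dB_s$, and the Skorokhod integral is \emph{not} injective on its full domain: for instance $u_t=B_1\eins_{(1,2]}(t)-(B_2-B_1)\eins_{(0,1]}(t)$ satisfies $u\neq 0$ but $\delta(u)=I(\eins_{(0,1]})I(\eins_{(1,2]})-I(\eins_{(1,2]})I(\eins_{(0,1]})=0$, since $\delta$ annihilates the antisymmetric part of the kernels. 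Injectivity via the isometry holds only on the predictable class, which is exactly what you have not established for $\tilde Z$. The paper circumvents this by identifying the weak limit directly through the $S$-transform criterion (Theorem \ref{thm_S_transform_process_convergence_discrete}): it first applies the already-proved implication to the integrands of Proposition \ref{prop:Ito}, then uses predictability of $Z^n$ through $\ex[\exp^{\diamond_n}(I^n(\check g^n))\,|\,\F^n_{i-1}]=\exp^{\diamond_n}(I^n(\check g^n\eins_{[1,i-1]}))$ and the discrete isometry to rewrite $\frac1n\sum_i (S^nZ^n_i)(\check g^n)\check h^n(i)$ as an inner product with the assumed-convergent discrete It\^o integrals, and finally uses predictability of $Z$ (martingale property of $\exp^{\diamond}(I(g\eins_{(0,\cdot]}))$ plus the It\^o isometry) to recognize the limit as $\int_0^\infty (SZ_s)(g)h(s)\,ds$. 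To repair your route you would have to prove that weak limits of $(\F^n_i)$-predictable processes are $(\F_t)$-predictable under (\ref{eq_Donsker_embedding}) alone; that is a nontrivial convergence-of-filtrations statement and amounts to essentially the same work the paper's computation performs.
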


\begin{remark}
 We note that, in order to study convergence of It\^o integrals (with respect to different filtrations),  techniques of convergence 
 in distribution on the Skorokhod space of
 right-continuous functions with left limits are classically applied. E.g., the results by \cite{KP} immediately imply the following result in our setting: 
Suppose that $Z$ is predictable with respect to the Brownian filtration and its paths 
are right-continuous with left limits. Moreover, assume that $Z^n$ is predictable with respect to $(\F^n_i)_{i\in \N}$ and $(Z^n_{\lfloor 1+ n (\cdot)\rfloor})$ converges to $Z$ uniformly on compacts in probability.
Then, 
$$
\lim_{n\rightarrow \infty} \sum_{i=1}^{\lfloor n \cdot\rfloor} Z^n_i \frac{1}{\sqrt n}\xi^n_i=\int_0^\cdot Z_{s-} dB_s, 
$$
uniformly on compacts in probability. In contrast, our Theorem \ref{thm:Ito_integral} provides an $L^2$-theory and, in particular, includes the converse implication, namely 
that convergence of the discrete It\^o integrals implies convergence of the integrands.
\end{remark}

The proof of Theorem \ref{thm:Ito_integral} will make use of the following proposition.
\begin{proposition}\label{prop:Ito}
 Suppose $g,h\in \mathcal{E}$. Then, strongly in $L^2(\Omega\times [0,\infty))$,
 $$
 \lim_{n\rightarrow \infty} \exp^{\diamond_n}(I^n(\check g^n \eins_{[1,\lceil n \cdot\rceil-1]})) \check h^n(\lceil n \cdot\rceil)=\exp^{\diamond}(I( g \eins_{(0,\cdot]})) h(\cdot).
 $$
\end{proposition}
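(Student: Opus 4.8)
The plan is to recast the claimed strong $L^2(\Omega\times[0,\infty))$-convergence as
\[
\int_0^\infty \ex\big[(Y^n_t-Y_t)^2\big]\,dt \longrightarrow 0,\qquad
Y^n_t:=\exp^{\diamond_n}(I^n(\check g^n\eins_{[1,\lceil nt\rceil-1]}))\,\check h^n(\lceil nt\rceil),\quad
Y_t:=\exp^{\diamond}(I(g\eins_{(0,t]}))\,h(t),
\]
and to establish this limit by dominated convergence in the time variable $t$. Accordingly I would split the argument into two parts: first, strong $L^2(\Omega)$-convergence of the time-slices $Y^n_t\to Y_t$ for Lebesgue-almost every fixed $t$; second, a uniform (in $n$) integrable domination of $t\mapsto\ex[(Y^n_t-Y_t)^2]$, which together with the pointwise convergence yields the integral limit.

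For the pointwise step I would fix an irrational $t$ that is moreover not one of the (finitely many) jump points of $h$. The key observation is that for irrational $t$ one has $\lceil nt\rceil-1=\lfloor nt\rfloor$ for every $n$, while $\check{(g\eins_{(0,t]})}^n(i)=g(i/n)\eins_{\{1\le i\le\lfloor nt\rfloor\}}$; hence the truncated discrete Wick exponential coincides \emph{exactly} with $\exp^{\diamond_n}(I^n(\check{(g\eins_{(0,t]})}^n))$. Since $g\eins_{(0,t]}\in\mathcal{E}$, Proposition \ref{proposition_Wiener_and_Wick_exp} gives $\exp^{\diamond_n}(I^n(\check{(g\eins_{(0,t]})}^n))\to\exp^{\diamond}(I(g\eins_{(0,t]}))$ strongly in $L^2(\Omega)$. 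Simultaneously, by (\ref{eq:checkg}) applied to $h$, the scalar $\check h^n(\lceil nt\rceil)=h(\lceil nt\rceil/n)$ converges to $h(t)$ whenever $t$ avoids the jump points of $h$. Writing $Y^n_t$ as a product of an $L^2(\Omega)$-convergent factor and a bounded convergent scalar then yields $\ex[(Y^n_t-Y_t)^2]\to0$ for almost every $t$.

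For the domination I would use the identity $\ex[(\exp^{\diamond_n}(I^n(f^n)))^2]=\prod_i(1+f^n(i)^2/n)\le\exp(\|f^n\|^2_{L^2_n(\N)})$ from the proof of Proposition \ref{proposition_Wiener_and_Wick_exp} to bound the second moment of the truncated Wick exponential uniformly by $\exp(\sup_n\|\check g^n\|^2_{L^2_n(\N)})<\infty$. Since $\check h^n(\lceil n\cdot\rceil)$ is uniformly bounded by $\|h\|_\infty$ and vanishes for $t$ beyond an integer $N_h$ bounding the support of $h$, I obtain $\ex[(Y^n_t)^2]\le K\eins_{[0,N_h]}(t)$ with a constant $K$ independent of $n$, and the same bound for $\ex[(Y_t)^2]$. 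Hence $\ex[(Y^n_t-Y_t)^2]$ is dominated by an integrable function of $t$ uniformly in $n$, and dominated convergence concludes.

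The main subtlety is exactly the bookkeeping that reconciles the truncation index $\lceil nt\rceil-1$ in the predictable discrete integrand with the support $[1,\lfloor nt\rfloor]$ of $\check{(g\eins_{(0,t]})}^n$; restricting to irrational $t$ makes these agree on the nose and lets me quote Proposition \ref{proposition_Wiener_and_Wick_exp} verbatim, avoiding a separate single-factor estimate. (Alternatively one could keep general $t$ and absorb the at-most-one differing factor via the $O(1/\sqrt n)$ estimate on $\ex[(\exp^{\diamond_n}(I^n(\check g^n))-\exp^{\diamond_n}(I^n(\check g^n\eins_{\N\setminus\{i\}})))^2]$ used in the proof of Theorem \ref{thm_S_transform_process_convergence_discrete}.) The second, more structural point to get right is the passage from pointwise-in-$t$ $L^2(\Omega)$-convergence to genuine $L^2(\Omega\times[0,\infty))$-convergence, which is precisely what the uniform integrable domination in $t$ supplies.
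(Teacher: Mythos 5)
Your proof is correct and follows essentially the same route as the paper's: both rest on applying Proposition \ref{proposition_Wiener_and_Wick_exp} to $g\eins_{(0,t]}$ for (almost every) fixed $t$, the uniform second-moment bound (\ref{eq:exponential_L2}) on the truncated discrete Wick exponentials, and dominated convergence in the time variable. The only cosmetic difference is that you handle the factor $\check h^n(\lceil nt\rceil)$ pointwise almost everywhere (restricting to irrational $t$ off the jump points), whereas the paper splits the error into two terms and treats the $h$-discretization error via the $L^2([0,\infty))$-norm convergence (\ref{eq:hilf0001}).
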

\begin{proof}
  Recall that the support of 
 $h$ is contained in 
 $[0,M]$ for some $M\in \N$. Hence, we can decompose,
 \begin{eqnarray*}
  &&\int_0^\infty \ex\left[\left(\exp^{\diamond_n}(I^n(\check g^n \eins_{[1,\lceil n t\rceil-1]})) {\check h^n}(\lceil nt \rceil)
  -\exp^{\diamond}(I(g \eins_{(0,t]})) h(t) \right)^2 \right]dt
  \\ &\leq& 
  2 \int_0^M \ex\left[\left(\exp^{\diamond_n}(I^n(\check g^n \eins_{[1,\lfloor n t\rfloor]})) 
  -\exp^{\diamond}(I( g \eins_{(0,t]}))  \right)^2 \right]h(t)^2  dt \\
  \\ && + 2 \int_0^\infty \ex\left[\left(\exp^{\diamond_n}(I^n(\check g^n \eins_{[1,\lceil n t\rceil-1]})) 
   \right)^2 \right] |{\check h^n}(\lceil nt \rceil)-h(t)|^2 dt,
 \end{eqnarray*}
 since $\lceil n t\rceil-1=\lfloor n t\rfloor$ for Lebesgue almost every $t\geq 0$.
As, by (\ref{eq:exponential_L2}) 
\begin{equation}\label{eq:hilfp1}
\sup_{n\in \N,\;t\in [0,\infty)}  \ex\left[\left(\exp^{\diamond_n}(I^n(\check g^n \eins_{[1,\lceil n t\rceil-1]})) \right)^2 \right] 
\leq  \sup_{n\in \N} \exp( \| {\check g^n}(\lceil n\cdot \rceil)\|^2_{L^2([0,\infty))}      ) <\infty, 
\end{equation}
the second term goes to zero by (\ref{eq:hilf0001}). Moreover, by the boundedness of $h$, the first one tends to zero by the dominated convergence theorem,
since, for every $t\in[0,\infty)$, by Proposition \ref{proposition_Wiener_and_Wick_exp},
$$
 \lim_{n\rightarrow \infty} \ex\left[\left(\exp^{\diamond_n}(I^n((\check{g}^n\eins_{[1,\lfloor n t\rfloor]}))) 
  -\exp^{\diamond}(I( g \eins_{(0,t]})) \right)^2 \right] =0.
$$
\end{proof}

\begin{proof}[Proof of Theorem \ref{thm:Ito_integral}]
 `$(i) \Rightarrow (ii)$': By the isometry for discrete It\^o integrals, we have
 \begin{equation}\label{eq:hilf0019}
\ex\left[\left(\int Z^n dB^n\right)^2  \right]= \ex\left[ \left|\sum_{i=1}^\infty  Z^n_i \frac{1}{\sqrt{n}}\xi_i^n\right|^2 \right]= \int_0^\infty \ex\left[ |Z^n_{\lceil ns\rceil}|^2 \right] ds. 
 \end{equation}
 Hence, if $(Z^n_{\lceil n\cdot \rceil})_{n\in \N}$ converges to $Z$ weakly in $L^2(\Omega,\F,P)$, then the left-hand side in (\ref{eq:hilf0019})
 is bounded in $n\in \N$, and so Theorem \ref{thm:Skorokhod_convergence_1} implies the asserted weak $L^2(\Omega,\F,P)$ convergence of  
 the sequence of discrete It\^o integrals to $\int_0^\infty Z_s dB_s$. If  $(Z^n_{\lceil n\cdot \rceil})_{n\in \N}$ converges to $Z$ strongly in $L^2(\Omega,\F,P)$,
 then, by (\ref{eq:hilf0019}) and the continuous time It\^o isometry,
 $$
 \lim_{n\rightarrow \infty} \ex\left[\left(\int Z^n dB^n\right)^2  \right]=\int_0^\infty \ex\left[ |Z_s|^2 \right]ds
 = \ex\left[\left(\int_0^\infty Z_s dB_s\right)^2 \right],
 $$
 which turns the weak $L^2(\Omega,\F,P)$-convergence of the sequence of discrete It\^o integrals into strong $L^2(\Omega,\F,P)$-convergence.
 \\[0.1cm]
 `$(ii) \Rightarrow (i)$': We first assume that the sequence of discrete It\^o integrals converges weakly in $L^2(\Omega,\F,P)$ 
 to the continuous time It\^o integral.
 By the implication `$(i) \Rightarrow (ii)$' (which we have already proved) and Proposition \ref{prop:Ito}, we obtain, for every $g,h\in \mathcal{E}$,
 \begin{equation}\label{eq:hilf0020}
  \lim_{n\rightarrow \infty} \sum_{i=1}^\infty  \exp^{\diamond_n}(I^n(\check g^n \eins_{[1,i-1]})) \check{h}^n(i)\frac{1}{\sqrt{n}}\xi_i^n=\int_0^\infty  \exp^\diamond(I(g \eins_{(0,s]}))\,h(s)\,dB_s
  \end{equation}
strongly in $L^2(\Omega,\F,P)$. As $Z^n$ is predictable and 
$$
\ex[ \exp^{\diamond_n}(I^n(\check g^n ))|\mathcal{F}^n_{i-1}]=\exp^{\diamond_n}(I^n(\check g^n \eins_{[1,i-1]})),
$$
we get, for every $g,h\in \mathcal{E}$, by the discrete It\^o isometry,
\begin{eqnarray*}
\frac{1}{n}\sum_{i=1}^{\infty} (S^n Z^n_i)(\check{g}^n)\check{h}^n(i) &=& 
\frac{1}{n}\sum_{i=1}^{\infty}\ex[\ex[Z^n_i|\mathcal{F}^n_{i-1}]\exp^{\diamond_n}(I^n(\check g^n)) \check{h}^n(i)] \\
&=&\frac{1}{n}\sum_{i=1}^{\infty} \ex[ Z^n_i \exp^{\diamond_n}(I^n(\check g^n \eins_{[1,i-1]})) \check{h}^n(i)] \\
&=& \ex\left[\left(\sum_{i=1}^\infty  Z^n_i \frac{1}{\sqrt{n}}\xi_i^n \right) \left( \sum_{i=1}^\infty  \exp^{\diamond_n}(I^n(\check g^n \eins_{[1,i-1]})) \check{h}^n(i)\frac{1}{\sqrt{n}}\xi_i^n\right) \right] .
\end{eqnarray*}
The assumed weak $L^2(\Omega,\F,P)$-convergence of the sequence of discrete It\^o integrals and the strong $L^2(\Omega,\F,P)$-convergence
in (\ref{eq:hilf0020}) now imply
$$
\lim_{n\rightarrow \infty} \frac{1}{n}\sum_{i=1}^{\infty} (S^n Z^n_i)(\check{g}^n)\check{h}^n(i)=
\ex\left[\left(\int_0^\infty  Z_s dB_s\right) \left( \int_0^\infty  \exp^\diamond(I(g \eins_{(0,s]}))\,h(s)\,dB_s\right) \right] .
$$
As $(\exp^\diamond(I(g \eins_{(0,s]})))_{s\in [0,\infty)}$ is a uniformly integrable martingale and $Z$ is predictable, we obtain,
by the It\^o isometry and the definition of the $S$-transform,
$$
\lim_{n\rightarrow \infty} \frac{1}{n}\sum_{i=1}^{\infty} (S^n Z^n_i)(\check{g}^n)\check{h}^n(i)=\int_{0}^{\infty}(S Z_s)(g) h(s) ds,\quad g,h \in \mathcal{E}.
$$
We can now apply Theorem \ref{thm_S_transform_process_convergence_discrete}. 
As $\int_0^\infty \ex\left[ |Z^n_{\lceil ns\rceil}|^2 \right] ds$ 
is bounded in $n$ by (\ref{eq:hilf0019}) and by the assumed weak $L^2(\Omega,\F,P)$-convergence of the discrete It\^o integrals, the latter 
Theorem implies that $(Z^n_{\lceil n\cdot \rceil})_{n\in \N}$ converges to $Z$ weakly in $L^2(\Omega\times [0,\infty))$. If we instead 
assume strong $L^2(\Omega,\F,P)$-convergence of the sequence of the discrete It\^o integrals, a straightforward application 
of the isometries for discrete and continuous-time It\^o integrals turns the weak $L^2(\Omega\times [0,\infty))$-convergence again 
into strong convergence. 
\end{proof}

We now turn to the Clark-Ocone derivative. Recall that a Brownian motion has the predictable representation property with respect to its 
natural filtration, i.e., for every $X\in L^2(\Omega,\F,P)$ there is a unique $(\F_t)_{t\in [0,\infty)}$-predictable process $\nabla X \in L^2(\Omega\times [0,\infty))$
such that
\begin{equation}\label{eq:predictable_representation}
 X=\ex[X]+\int_0^\infty \nabla_sX dB_s.
\end{equation}
We refer to $\nabla X$ as the \emph{generalized Clark-Ocone derivative} and recall that
$(\nabla_t X)_{t\geq 0}$ is the predictable projection of the Malliavin derivative $(D_tX)_{t\geq 0}$, if  $X\in \D^{1,2}$. By It\^o's isometry the operator $\nabla: L^2(\Omega,\F,P)\rightarrow L^2(\Omega\times [0,\infty))$
is continuous with  norm 1. 

Except in the case of binary noise, the discrete time approximation $B^{(n)}$ of the Brownian motion $B$ does not satisfy the discrete time
predictable representation property with respect to $(\F^n_i)_{i\in \N}$. Nonetheless one can consider the discrete time predictable 
projection of the discretized Malliavin derivative
$$
\nabla^n_i X:=\ex[ D^n_iX|\F^n_{i-1}]=\sqrt{n} \ex[\xi^n_i X| \F^n_{i-1}],\quad X\in L^2(\Omega,\F,P),\; i\in \N,
$$
as discretization of the generalized Clark-Ocone derivative. We refer 
to $(\nabla^n_i X)_{i\in \N}$ as \emph{discretized Clark-Ocone derivative of $X$} and note that it has been extensively studied 
in the context of discretization of backward stochastic differential equations, see, e.g., \cite{Briand_Delyon_Memin, Zhang,
GG}.

The operator
$$
\nabla^n: L^2(\Omega,\F,P)\rightarrow L^2_n(\Omega\times \N), \quad X \mapsto (\nabla^n_iX)_{i\in \N}
$$
is continuous with norm one. 
 Indeed, introducing the shorthand notation $\ex_{n,i}[\cdot]=\ex[\cdot|\F^n_i]$
and noting that the martingale $(\ex_{n,i}[X])_{i\in \N}$ is, for fixed $n\in \N$, uniformly integrable, and, thus, converges 
almost surely to $\ex[X|\F^n]$, as $i$ tends to infinity, one gets, by H\"older's and Jensen's inequality,
\begin{eqnarray*}
&& \frac{1}{n} \sum_{i=1}^\infty \ex\left[ \left(\sqrt{n}\, \ex_{n,i-1}\left[{\xi^n_i} X\right]\right)^2 \right]  
 =  \sum_{i=1}^\infty  \ex\left[ \left( \ex_{n,i-1}\left[{\xi^n_i} \left(\ex_{n,i}[X] -\ex_{n,i-1}[X]\right)\right]\right)^2 \right]\\
 &\leq & \sum_{i=1}^\infty  \ex\left[  \ex_{n,i-1}\left[({\xi^n_i})^2 \right]\ex_{n,i-1}\left[ \left(\ex_{n,i}[X] -\ex_{n,i-1}[X]\right)^2 \right]\right] \\
 &=& \ex\left[ \sum_{i=1}^\infty \left(\left(\ex_{n,i}[X]\right)^2 -\left(\ex_{n,i-1}[X]\right)^2\right) \right]
 =\ex\left[ \left(\ex[X|\F^n]\right)^2\right]- \ex\left[ X\right]^2 \\
 &\leq &  \ex\left[ \left(X\right)^2\right]- \ex\left[ X\right]^2.
\end{eqnarray*}
We now denote by
$$
\mathcal{P}^n:=\left\{a+ \int Z^n dB^n;\; a\in \R,\, Z^n\in L^2_n(\Omega \times \N) \textnormal{ predictable}\right\}
$$ 
the closed subspace in $L^2(\Omega,\F,P)$, which admits a discrete time predictable integral representation. 
Note that, for every $X\in L^2(\Omega,\F,P)$, $a\in \R$, and $(\F^n_i)_{i\in \N}$-predictable $Z^n\in L^2_n(\Omega \times \N)$,
by the discrete It\^o isometry,
\begin{align*}
 &\ex\left[X\left(a+\int Z^n dB^n\right) \right] = a\ex[X]+\frac{1}{\sqrt{n}}\sum_{i=1}^{\infty}\ex[X\xi^n_i \ex[Z^n_i|\mathcal{F}^n_{i-1}]]  \\
 &=a\ex[X]+ \frac{1}{n} \sum_{i=1}^\infty \ex\left[Z^n_i \sqrt{n} \ex[X\xi^n_i|\F^n_{i-1}] \right]
 =\ex\left[\left(\ex[X]+\int \nabla^nX dB^n \right)\left(a+\int Z^n dB^n\right)\right].
\end{align*}
Hence,
\begin{equation}\label{eq:predictable_projection}
 \pi_{\mathcal{P}^n}X= \ex[X]+\int \nabla^nX dB^n,
\end{equation}
where, for any closed subspace $\mathcal{A}$ in $L^2(\Omega,\F,P)$, $\pi_\mathcal{A}$ denotes the orthogonal projection on 
$\mathcal{A}$.

Our first approximation result for the Clark-Ocone derivative now reads as follows:
\begin{theorem}\label{thm:Clark-Ocone_1}
 Suppose $(X^n)_{n\in \N}$ is a sequence in $L^2(\Omega,\F,P)$ and $X\in L^2(\Omega,\F,P)$. Then, the following are equivalent,
 as $n$ tends to infinity:
 \begin{enumerate}
  \item[(i)] $(\pi_{\mathcal{P}^n}X^n-\ex[X^n])_{n\in \N}$ converges to $X-\ex[X]$ strongly (weakly) in $L^2(\Omega,\F,P)$.
  \item[(ii)] $(\nabla^n_{\lceil n\cdot \rceil} X^n)_{n\in \N}$ converges to $\nabla X$ strongly (weakly) in $L^2(\Omega\times [0,\infty))$.
 \end{enumerate}
A sufficient condition for $(i), (ii)$ is that $(X^n)_{n\in \N}$ converges to $X$ strongly (weakly) in $L^2(\Omega,\F,P)$.
\end{theorem}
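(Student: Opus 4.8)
The plan is to obtain the equivalence of (i) and (ii) directly from the It\^o-integral approximation theorem, and then to prove the sufficient condition by a short $S$-transform computation.

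For the equivalence I would put $Z^n := \nabla^n X^n$ and $Z := \nabla X$. By construction $\nabla^n_i X^n$ is $\F^n_{i-1}$-measurable, hence $(\F^n_i)$-predictable, and since $\nabla^n$ is continuous (of norm one) we have $Z^n \in L^2_n(\Omega\times\N)$; likewise $Z = \nabla X$ is $(\F_t)$-predictable and lies in $L^2(\Omega\times[0,\infty))$. Now \eqref{eq:predictable_projection} reads $\pi_{\mathcal{P}^n}X^n - \ex[X^n] = \int Z^n\,dB^n$, while \eqref{eq:predictable_representation} reads $X - \ex[X] = \int_0^\infty Z_s\,dB_s$. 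Under this identification assertion (i) is exactly the convergence of the discrete It\^o integrals to the continuous one, and assertion (ii) is the convergence $Z^n_{\lceil n\cdot\rceil}\to Z$ of the integrands. Theorem \ref{thm:Ito_integral} asserts that these are equivalent, in both the strong and the weak sense, which is the claim.

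For the sufficient condition I would show that $X^n\to X$ (strongly or weakly) forces $\pi_{\mathcal{P}^n}X^n\to X$ in the same sense; as weak convergence already gives $\ex[X^n]=\langle X^n,1\rangle\to\ex[X]$, subtracting the means then produces (i). The one idea that makes this work is that the discrete Wick exponential lies in $\mathcal{P}^n$: by the discrete Dol\'eans--Dade equation \eqref{eq:Doleans_Dade_discrete}, $\exp^{\diamond_n}(I^n(\check g^n))$ is a constant plus a discrete It\^o integral with the $(\F^n_i)$-predictable, finitely supported integrand $i\mapsto \check g^n(i)\exp^{\diamond_n}(I^n(\check g^n\eins_{[1,i-1]}))$. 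Hence $\pi_{\mathcal{P}^n}\exp^{\diamond_n}(I^n(\check g^n))=\exp^{\diamond_n}(I^n(\check g^n))$, and the self-adjointness of the orthogonal projection yields, for every $g\in\mathcal{E}$,
\[
(S^n \pi_{\mathcal{P}^n}X^n)(\check g^n) = \ex[\pi_{\mathcal{P}^n}X^n\,\exp^{\diamond_n}(I^n(\check g^n))] = \ex[X^n\,\exp^{\diamond_n}(I^n(\check g^n))] = (S^n X^n)(\check g^n).
\]
Since $X^n\to X$, Theorem \ref{thm_S_transform_convergence_1} gives $(S^nX^n)(\check g^n)\to (SX)(g)$, so the discrete $S$-transforms of $\pi_{\mathcal{P}^n}X^n$ converge to that of $X$.

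It remains to feed the norm information into Theorem \ref{thm_S_transform_convergence_1}. Because $\pi_{\mathcal{P}^n}$ is an orthogonal projection, $\ex[(\pi_{\mathcal{P}^n}X^n)^2]\leq\ex[(X^n)^2]$, which is bounded in $n$; together with the $S$-transform convergence this gives $\pi_{\mathcal{P}^n}X^n\to X$ weakly, settling the weak case. For the strong case, weak convergence and the weak lower semicontinuity of the norm give $\ex[X^2]\leq\liminf_n\ex[(\pi_{\mathcal{P}^n}X^n)^2]$, whereas the projection bound together with $\ex[(X^n)^2]\to\ex[X^2]$ gives $\limsup_n\ex[(\pi_{\mathcal{P}^n}X^n)^2]\leq\ex[X^2]$; hence the norms converge and Theorem \ref{thm_S_transform_convergence_1} upgrades the convergence to the strong topology. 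I expect the only delicate point to be this final norm-squeeze turning weak convergence into strong convergence; the identification of the Wick exponential as an element of $\mathcal{P}^n$ is the crucial observation, but it follows at once from \eqref{eq:Doleans_Dade_discrete}.
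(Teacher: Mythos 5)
Your proposal is correct and follows essentially the same route as the paper: the equivalence of (i) and (ii) via the representations \eqref{eq:predictable_representation} and \eqref{eq:predictable_projection} combined with Theorem \ref{thm:Ito_integral}, and the sufficient condition via the observation that $\exp^{\diamond_n}(I^n(\check g^n))\in\mathcal{P}^n$ by \eqref{eq:Doleans_Dade_discrete} together with the self-adjointness of $\pi_{\mathcal{P}^n}$ and Theorem \ref{thm_S_transform_convergence_1} (the paper packages this last step as Lemma \ref{lem:projections}). The only cosmetic difference is in upgrading weak to strong convergence: you use a norm squeeze via weak lower semicontinuity, while the paper computes $\ex[(\pi_{\mathcal{P}^n}X^n)^2]=\ex[X\,\pi_{\mathcal{P}^n}X^n]+\ex[(X^n-X)\,\pi_{\mathcal{P}^n}X^n]\rightarrow\ex[X^2]$ directly; both are equivalent Hilbert-space arguments.
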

\begin{proof}
 Recall that by (\ref{eq:predictable_representation}) and (\ref{eq:predictable_projection})
 \begin{eqnarray*}
  X-\ex[X]&=& \int_0^\infty \nabla X_s dB_s, \\ 
  \pi_{\mathcal{P}^n}X^n- \ex[X^n]&=&\int \nabla^nX^n dB^n.
 \end{eqnarray*}
Hence, Theorem \ref{thm:Ito_integral} provides the equivalence of $(i)$ and $(ii)$. As, for every $g\in \mathcal{E}$, 
$\exp^{\diamond_n}(I^n(\check g^n)) \in \mathcal{P}^n$ by (\ref{eq:Doleans_Dade_discrete}), the sufficient condition is a consequence 
of the following lemma.
\end{proof}

\begin{lemma}\label{lem:projections}
 Suppose that $\mathcal{A}^n, \, n \in \N,$ are closed subspaces of $L^2(\Omega,\F,P)$ such that for every $n\in \N$,
 $$
 \{ \exp^{\diamond_n}(I^n(\check g^n)),\; g\in \mathcal{E}\} \subset \mathcal{A}_n.
 $$
 Then, strong (weak) $L^2(\Omega,\F,P)$-convergence of $(X^n)_{n\in \N}$ to $X$ implies that  
 $(\pi_{\mathcal{A}^n} X^n)_{n\in \N}$ converges to $X$ strongly (weakly) in $L^2(\Omega,\F,P)$ as well.
\end{lemma}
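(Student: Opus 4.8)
The plan is to reduce the statement to the $S$-transform characterization of $L^2$-convergence in Theorem \ref{thm_S_transform_convergence_1}. The key observation I would exploit is that the discrete $S$-transform is insensitive to the projection: since the orthogonal projection $\pi_{\mathcal{A}^n}$ is self-adjoint on $L^2(\Omega,\F,P)$ and fixes every element of $\mathcal{A}^n$, and since $\exp^{\diamond_n}(I^n(\check g^n))\in\mathcal{A}^n$ by hypothesis, one has for every $g\in\mathcal{E}$
$$(S^n(\pi_{\mathcal{A}^n}X^n))(\check g^n)=\ex[(\pi_{\mathcal{A}^n}X^n)\exp^{\diamond_n}(I^n(\check g^n))]=\ex[X^n\,\pi_{\mathcal{A}^n}\exp^{\diamond_n}(I^n(\check g^n))]=(S^nX^n)(\check g^n).$$

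First I would apply the implication $(i)\Rightarrow(ii)$ of Theorem \ref{thm_S_transform_convergence_1} to the assumed convergence $X^n\to X$, obtaining $(S^nX^n)(\check g^n)\to(SX)(g)$ for every $g\in\mathcal{E}$; by the displayed identity the same holds with $\pi_{\mathcal{A}^n}X^n$ in place of $X^n$. Since $\pi_{\mathcal{A}^n}$ is a contraction, $\ex[(\pi_{\mathcal{A}^n}X^n)^2]\le\ex[(X^n)^2]$. In the weak case this is essentially all that is needed: weak convergence of $(X^n)$ gives $\sup_n\ex[(X^n)^2]<\infty$, hence $\sup_n\ex[(\pi_{\mathcal{A}^n}X^n)^2]<\infty$, and the implication $(ii)\Rightarrow(i)$ of Theorem \ref{thm_S_transform_convergence_1} then yields the asserted weak convergence of $(\pi_{\mathcal{A}^n}X^n)$ to $X$.

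The hard part will be the strong case, where I must upgrade this to convergence of the norms $\ex[(\pi_{\mathcal{A}^n}X^n)^2]\to\ex[X^2]$. The contraction bound together with strong convergence of $(X^n)$ gives $\limsup_n\ex[(\pi_{\mathcal{A}^n}X^n)^2]\le\lim_n\ex[(X^n)^2]=\ex[X^2]$, which only furnishes the upper bound. For the matching lower bound I would first note that the $S$-transform convergence and the uniform norm bound already establish weak convergence of $(\pi_{\mathcal{A}^n}X^n)$ to $X$ exactly as above, and then invoke weak lower semicontinuity of the norm to get $\ex[X^2]\le\liminf_n\ex[(\pi_{\mathcal{A}^n}X^n)^2]$. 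Combining the two bounds yields norm convergence, and since in a Hilbert space weak convergence together with convergence of norms is equivalent to strong convergence, the strong convergence of $(\pi_{\mathcal{A}^n}X^n)$ to $X$ follows; equivalently, one may simply feed the now-established $S$-transform convergence and norm convergence into the strong part of Theorem \ref{thm_S_transform_convergence_1}.
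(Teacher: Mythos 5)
Your proposal is correct and takes essentially the same route as the paper: the identical $S$-transform identity $(S^n(\pi_{\mathcal{A}^n}X^n))(\check g^n)=(S^nX^n)(\check g^n)$, the contraction bound $\ex[(\pi_{\mathcal{A}^n}X^n)^2]\leq\ex[(X^n)^2]$, and Theorem \ref{thm_S_transform_convergence_1} to settle the weak case first and then upgrade to strong convergence via norm convergence. The only (immaterial) difference lies in the final step: you sandwich $\ex[(\pi_{\mathcal{A}^n}X^n)^2]$ between the weak lower semicontinuity bound and the contraction bound, whereas the paper writes $\ex[(\pi_{\mathcal{A}^n}X^n)^2]=\ex[X\,\pi_{\mathcal{A}^n}X^n]+\ex[(X^n-X)\,\pi_{\mathcal{A}^n}X^n]$ and applies H\"older together with the strong convergence of $(X^n)$; both are one-line arguments resting on the already established weak convergence.
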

\begin{proof}
 As, for every $g\in \mathcal{E}$,
 $$
 \ex[(\pi_{\mathcal{A}^n} X^n) \exp^{\diamond_n}(I^n(\check g^n)) ] = \ex[X^n\, \pi_{\mathcal{A}^n}( \exp^{\diamond_n}(I^n(\check g^n)))] =
 \ex[X^n \exp^{\diamond_n}(I^n(\check g^n))],
 $$
 we obtain that $(S^n X^n)(\check{g}^n)= (S^n  (\pi_{\mathcal{A}^n}X^n))(\check{g}^n)$. In the case of weak convergence, Theorem 
 \ref{thm_S_transform_convergence_1} now  immediately applies, because
 $$
 \ex\left[\left(\pi_{\mathcal{A}^n} X^n\right)^2\right]\leq \ex\left[\left(X^n\right)^2\right].
 $$
 In the case of strong convergence, we also make use of Theorem \ref{thm_S_transform_convergence_1}, and note that 
 by the already established weak convergence of $(\pi_{\mathcal{A}^n} X^n)_{n\in \N}$ and H\"older's inequality,
 $$
 \ex\left[\left(\pi_{\mathcal{A}^n} X^n\right)^2\right]=\ex\left[X(\pi_{\mathcal{A}^n} X^n)\right]+\ex\left[(X^n-X)(\pi_{\mathcal{A}^n}X^n) \right]
 \rightarrow \ex\left[X^2\right],\quad n\rightarrow \infty.
 $$
\end{proof}

We shall finally discuss an alternative approximation of the generalized Clark-Ocone derivative, which involves 
orthogonal projections 
on appropriate finite-dimensional subspaces. To this end,
 we 
 denote by $\mathcal{H}^n$ the strong closure in $L^2(\Omega, \F,P)$ of the linear span of
\begin{equation*}
\Xi^n:=\left\{\Xi_{A}^n := \prod\limits_{i \in A} \xi^{n}_{i},\quad A \subseteq \N, |A|<\infty\right\},
\end{equation*}
and emphasize that  $\mathcal{H}^n=L^2(\Omega, \F^n,P)$, if and only if the noise distribution of $\xi$ is binary. 
As $\Xi^n$ consists of an orthonormal basis of $\mathcal{H}^n$, every $X^n \in \mathcal{H}^n$ has a unique expansion in terms of this Hilbert space basis, which is called the \emph{Walsh decomposition} of $X^n$,
\begin{equation}\label{Walsh_decomposition}
X^n = \sum\limits_{|A|<\infty} X_{A}^n\Xi_{A}^n,
\end{equation}
where $X^n_{A} =\ex[X^n\Xi_{A}^n]$ satisfies $\sum_{|A|<\infty}(X^n_A)^2 <\infty$. The expectation and $L^2(\Omega,\F,P)$-inner product can be computed in terms of the Walsh decomposition via $\ex[X^n]=X_{\emptyset}^n$ and  
\begin{equation}\label{Walsh_decomposition_inner_product}
\ex\left[X^n Y^n\right] = \sum\limits_{|A|<\infty}X_{A}^nY_{A}^{n},\quad X^n,Y^n \in \mathcal{H}^n,
\end{equation}
cp. \cite{Holden_discrete_Wick}.
A direct computation shows that the Walsh decomposition of a discrete Wick exponential is given by
\begin{equation}\label{eq_discrete_Wick_exp_sum_repr}
\exp^{\diamond_n}(I^n(f^n)) = \sum\limits_{|A|<\infty} \left(n^{-|A|/2}\prod\limits_{i \in A} f^n(i)\right)\Xi_{A}^{n}.
\end{equation}
In view of the M\"obius inversion formula 
 \cite[Theorem 5.5]{Aigner}, we obtain, for every finite subset $B$ of $\N$,
 \begin{equation}\label{eq:inversion}
 \Xi_{B}^n = n^{|B|/2}\, \sum\limits_{C \subseteq B} (-1)^{|B|-|C|} \exp^{\diamond_n}(I^n(\eins_C)).
 \end{equation}
Hence, the set $\{\exp^{\diamond_n}(I^n(\check{g}^n)),\;g\in \mathcal{E} \}$ is total in
 $\mathcal{H}^n$. 

We now consider the finite-dimensional subspaces
$$
\mathcal{H}^n_i:=\textnormal{span}\{\Xi^n_A,\; A\subset \{1,\ldots, i\}\},
$$
and introduce, as a second approximation of the generalized Clark-Ocone derivative, the operator
$$
\bar \nabla^n: L^2(\Omega,\F,P)\rightarrow L^2_n(\Omega\times \N), \quad X \mapsto (\pi_{\mathcal{H}^n_{i-1}}(\nabla^n_i X ))_{i\in \N}.
$$
Notice that
$$
\bar \nabla^n_i X= \sqrt{n}\, \pi_{\mathcal{H}^n_{i-1}}(\xi^n_iX),
$$
if $\xi^n_i X \in L^2(\Omega,\F,P)$.

We are now going to show the following variant of Theorem \ref{thm:Clark-Ocone_1}.
\begin{theorem}\label{thm:Clark-Ocone_2}
 Suppose $(X^n)_{n\in \N}$ is a sequence in $L^2(\Omega,\F,P)$ and $X\in L^2(\Omega,\F,P)$. Then, the following are equivalent,
 as $n$ tends to infinity:
 \begin{enumerate}
  \item[(i)] $(\pi_{\mathcal{H}^n}X^n-\ex[X^n])_{n\in \N}$ converges to $X-\ex[X]$ strongly (weakly) in $L^2(\Omega,\F,P)$.
  \item[(ii)] $(\bar\nabla^n_{\lceil n\cdot \rceil} X^n)_{n\in \N}$ converges to $\nabla X$ strongly (weakly) in $L^2(\Omega\times [0,\infty))$.
 \end{enumerate}
A sufficient condition for $(i), (ii)$ is that $(X^n)_{n\in \N}$ converges to $X$ strongly (weakly) in $L^2(\Omega,\F,P)$.
\end{theorem}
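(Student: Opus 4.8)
The plan is to mirror the proof of Theorem \ref{thm:Clark-Ocone_1}, replacing the discrete predictable representation space $\mathcal{P}^n$ by $\mathcal{H}^n$ and the operator $\nabla^n$ by $\bar\nabla^n$, and once again to reduce everything to the It\^o integral limit theorem (Theorem \ref{thm:Ito_integral}). The essential new ingredient I would establish is the discrete predictable representation
$$
\pi_{\mathcal{H}^n}X^n-\ex[X^n]=\int \bar\nabla^n X^n\, dB^n,
$$
together with the observation that $\bar\nabla^n X^n$ is predictable with respect to $(\F^n_i)_{i\in\N}$, since $\bar\nabla^n_i X^n\in \mathcal{H}^n_{i-1}\subset L^2(\Omega,\F^n_{i-1},P)$ by definition.

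To derive the identity I would compute $\bar\nabla^n_i X^n$ in the Walsh basis. Using $\bar\nabla^n_i X^n=\pi_{\mathcal{H}^n_{i-1}}(\nabla^n_i X^n)$ and that $\Xi^n_B$ is $\F^n_{i-1}$-measurable for $B\subseteq\{1,\ldots,i-1\}$, the relevant Walsh coefficients are
$$
\langle \nabla^n_iX^n,\Xi^n_B\rangle=\sqrt{n}\,\ex[\ex[\xi^n_iX^n|\F^n_{i-1}]\Xi^n_B]=\sqrt{n}\,\ex[X^n\,\xi^n_i\Xi^n_B]=\sqrt{n}\,X^n_{B\cup\{i\}},
$$
where the middle step is legitimate because $X^n\Xi^n_{B\cup\{i\}}\in L^1$, and the last uses $\xi^n_i\Xi^n_B=\Xi^n_{B\cup\{i\}}$ (as $i\notin B$) together with $X^n_A=\ex[X^n\Xi^n_A]$. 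Since $\Xi^n$ is an orthonormal basis, this gives $\bar\nabla^n_iX^n=\sqrt{n}\sum_{B\subseteq\{1,\ldots,i-1\}}X^n_{B\cup\{i\}}\Xi^n_B$. Substituting into $\int\bar\nabla^n X^n\,dB^n=\sum_{i\geq 1}\bar\nabla^n_iX^n\,\xi^n_i/\sqrt{n}$ and using $\Xi^n_B\xi^n_i=\Xi^n_{B\cup\{i\}}$ once more, the resulting double sum over pairs $(i,B)$ with $B\subseteq\{1,\ldots,i-1\}$ is reindexed by the bijection $(i,B)\mapsto B\cup\{i\}$ onto the nonempty finite subsets of $\N$ (inverse $A\mapsto(\max A,\,A\setminus\{\max A\})$), which telescopes it to $\sum_{A\neq\emptyset}X^n_A\Xi^n_A=\pi_{\mathcal{H}^n}X^n-\ex[X^n]$. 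The same reindexing yields $\|\bar\nabla^nX^n\|^2_{L^2_n(\Omega\times\N)}=\sum_{A\neq\emptyset}(X^n_A)^2\leq\ex[(X^n)^2]<\infty$, so $\bar\nabla^nX^n\in L^2_n(\Omega\times\N)$ and Theorem \ref{thm:Ito_integral} is applicable.

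With this representation in hand and $\bar\nabla^nX^n$ predictable, the proof closes exactly as for Theorem \ref{thm:Clark-Ocone_1}. Recalling $X-\ex[X]=\int_0^\infty\nabla_sX\,dB_s$ from (\ref{eq:predictable_representation}), Theorem \ref{thm:Ito_integral} applied to the predictable integrands $\bar\nabla^nX^n$, with continuous-time limit $\nabla X$, yields the equivalence of (i) and (ii) simultaneously in the strong and the weak topology. For the sufficient condition I would invoke Lemma \ref{lem:projections} with $\mathcal{A}^n=\mathcal{H}^n$: by (\ref{eq_discrete_Wick_exp_sum_repr}) each $\exp^{\diamond_n}(I^n(\check g^n))$ lies in $\mathcal{H}^n$, so strong (resp.\ weak) $L^2(\Omega,\F,P)$-convergence of $(X^n)$ to $X$ forces $\pi_{\mathcal{H}^n}X^n\to X$; subtracting the numerical sequence $\ex[X^n]\to\ex[X]$ then gives (i). I expect no essential difficulty beyond the bookkeeping in the Walsh-basis computation of the identity, specifically checking that the $\max$-bijection makes the reindexing in $\int\bar\nabla^nX^n\,dB^n$ exact; once that telescoping is secured, the remainder is a faithful transcription of the $\mathcal{P}^n$-argument with $\nabla^n$ replaced by $\bar\nabla^n$.
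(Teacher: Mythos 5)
Your proof is correct, and its overall skeleton is the same as the paper's: establish the discrete representation $\pi_{\mathcal{H}^n}X^n-\ex[X^n]=\int \bar\nabla^n X^n\, dB^n$ with $\bar\nabla^n X^n$ predictable, then invoke Theorem \ref{thm:Ito_integral} for the equivalence of (i) and (ii) and Lemma \ref{lem:projections} (with $\mathcal{A}^n=\mathcal{H}^n$) for the sufficient condition. Where you differ is in how the representation identity is obtained. The paper first proves the inclusion $\mathcal{H}^n\subset\mathcal{P}^n$, i.e.\ identity (\ref{eq:predictable_representation_discrete}), by a density argument: totality of the Wick exponentials in $\mathcal{H}^n$, continuity of $\nabla^n$, the explicit formula (\ref{eq:discrete_clark_ocone_exp}), and the discrete Dol\'eans-Dade recursion (\ref{eq:Doleans_Dade_discrete}); it then shows $\bar\nabla^n_i X=\nabla^n_i(\pi_{\mathcal{H}^n}X)$ by matching inner products against the basis of $\mathcal{H}^n_{i-1}$, and composes these two facts. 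You instead compute $\bar\nabla^n_i X^n=\sqrt{n}\sum_{B\subseteq\{1,\ldots,i-1\}}X^n_{B\cup\{i\}}\Xi^n_B$ directly in the Walsh basis and recover $\pi_{\mathcal{H}^n}X^n-\ex[X^n]$ by the reindexing $(i,B)\mapsto B\cup\{i\}$ (inverse $A\mapsto(\max A, A\setminus\{\max A\})$), with the exactness of the telescoping secured by square-summability of the Walsh coefficients and the discrete It\^o isometry. Your route is more elementary and self-contained -- it bypasses Wick exponentials entirely for this step -- whereas the paper's route yields, along the way, two facts of independent use: the inclusion $\mathcal{H}^n\subset\mathcal{P}^n$ exploited in Remark \ref{rem:projections}, and the commutation relation $\bar\nabla^n_i = \nabla^n_i\circ\pi_{\mathcal{H}^n}$ that makes the transfer from Theorem \ref{thm:Clark-Ocone_1} literally verbatim. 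Both arguments are complete; your $L^1$-justification of the tower-property step and the norm bound $\|\bar\nabla^n X^n\|^2_{L^2_n(\Omega\times\N)}=\sum_{A\neq\emptyset}(X^n_A)^2\leq\ex[(X^n)^2]$ are exactly the points that needed checking.
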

The proof is based on the simple observation that
$\mathcal{H}^n\subset \mathcal{P}^n$, i.e., for every  $X^n\in \mathcal{H}^n$, 
 \begin{equation}\label{eq:predictable_representation_discrete}
 X^n=\ex[X^n]+ \sum_{i=1}^\infty  \nabla^n_i X^n \frac{1}{\sqrt{n}}\xi_i^n.
 \end{equation}
 In order to show this, we recall that  $\{\exp^{\diamond_n}(I^n(\check{g}^n)),\;g\in \mathcal{E} \}$ is total in
 $\mathcal{H}^n$.
 Thus, by continuity of the discretized Clark-Ocone derivative and by the discrete It\^o isometry, it suffices to show (\ref{eq:predictable_representation_discrete})
 in
 the case $X^n=\exp^{\diamond_n}(I^n(\check f^n))$
 for $f\in \mathcal{E}$. A direct computation shows,
 \begin{equation}\label{eq:discrete_clark_ocone_exp}
 \nabla^n_i \exp^{\diamond_n}(I^n(f^n))=f^n(i) \exp^{\diamond_n}(I^n(f^n\eins_{[1,i-1]})),
 \end{equation}
 which in view of (\ref{eq:Doleans_Dade_discrete}) completes the proof of (\ref{eq:predictable_representation_discrete}).

\begin{proof}[Proof of Theorem \ref{thm:Clark-Ocone_2}]
 We first note that, for every $X\in L^2(\Omega,\F,P)$,
 \begin{eqnarray}
 \ex[\pi_{\mathcal{H}^n}X]&=&\ex[X], \label{eq:hilf0025}\\
 \bar \nabla_i^n X&=&\nabla^n_i (\pi_{\mathcal{H}^n}X). \label{eq:hilf0026}
\end{eqnarray}
Indeed, as
$$
\pi_{\mathcal{H}^n}X=\ex[X]+\sum_{1\leq |A|<\infty} \ex[X \Xi^n_A]\Xi^n_A,
$$
Eq. (\ref{eq:hilf0025}) is obvious.  
In order to prove (\ref{eq:hilf0026}), we recall first that $\nabla^n_i(\pi_{\mathcal{H}^n}X)\in \mathcal{H}^n_{i-1}$
(by (\ref{eq:discrete_clark_ocone_exp}) and continuity of the discretized Clark-Ocone derivative) and then note that, for every $A\subset \{1,\ldots,i-1\}$,
 \begin{eqnarray*}
 \ex\left[ \Xi^n_{A} \ex\left[\left.{\xi^n_i} X\right|\mathcal{F}^n_{i-1}\right]\right] &=& \ex[ \Xi^n_{A\cup\{i\}} X]=
 \ex[\Xi^n_{A\cup\{i\}} \pi_{\mathcal{H}^n}(X)]
  \\ &=& \ex\left[\Xi^n_{A} \ex\left[\left.\xi^n_i \pi_{\mathcal{H}^n}(X)\right|\mathcal{F}^n_{i-1}\right]\right]=\ex\left[\Xi^n_A \frac{1}{\sqrt{n}}  \nabla^n_i(\pi_{\mathcal{H}^n}X)\right] .
 \end{eqnarray*}
In particular, by (\ref{eq:predictable_representation_discrete}),  (\ref{eq:hilf0025}), and  (\ref{eq:hilf0026})
\begin{equation}\label{eq:hilf0027}
\pi_{\mathcal{H}^n}X=\ex[X]+\int \bar\nabla^n X dB^n,
\end{equation}
 which is the analogue of (\ref{eq:predictable_projection}). The proof of Theorem \ref{thm:Clark-Ocone_1} can now be repeated verbatim 
 with $\mathcal{P}^n$ replaced by $\mathcal{H}^n$.
\end{proof}

We close this section with two remarks.

\begin{remark}\label{rem:projections}
 In view of Lemma \ref{lem:projections} and the inclusion $\mathcal{H}^n\subset \mathcal{P}^n$ we observe that, for any sequence 
 $(X^n)_{n\in \N}$ in $L^2(\Omega,\F,P)$,
 \begin{eqnarray*}
  && \lim_{n\rightarrow \infty} X_n =X\quad \textnormal{ strongly (weakly) in } L^2(\Omega,\F,P)  \\
  &\Rightarrow & \lim_{n\rightarrow \infty} \pi_{\mathcal{P}^n} X_n =X\quad \textnormal{ strongly (weakly) in } L^2(\Omega,\F,P)
  \\
  &\Rightarrow & \lim_{n\rightarrow \infty} \pi_{\mathcal{H}^n} X_n =X\quad \textnormal{ strongly (weakly) in } L^2(\Omega,\F,P).
 \end{eqnarray*}
In particular, by Theorems \ref{thm:Clark-Ocone_1} and \ref{thm:Clark-Ocone_2}, if the sequence of discretized Clark-Ocone derivatives
$(\nabla^n_{\lceil n\cdot \rceil} X^n)_{n\in \N}$ converges to $\nabla X$ strongly (weakly) in $L^2(\Omega\times [0,\infty))$, then
so does the sequence of modified discretized Clark-Ocone derivatives 
 $(\bar \nabla^n_{\lceil n\cdot \rceil} X^n)_{n\in \N}$ .
\end{remark}

\begin{remark}
The following result can be derived from \cite[Theorem 5 and the examples in Section 5]{Briand_Delyon_Memin} under the additional assumption that 
$\ex[|\xi|^{2+\epsilon}]<\infty$ 
for some $\epsilon>0$ and on a finite time horizon: Strong convergence of $(X^n)_{n\in \N}$ to $X$ in $L^2(\Omega,\F,P)$ implies convergence of the sequence of 
discretized Clark-Ocone derivatives as stated in (ii) of Theorem \ref{thm:Clark-Ocone_1}.  Our Theorem \ref{thm:Clark-Ocone_2} additionally
shows that the conditional expectations $\ex[\cdot|\F^n_{i-1}]$ in the definition of the discretized Clark-Ocone derivative 
can be replaced by the projection on the finite dimensional subspace $\mathcal{H}^n_i$, i.e., if 
$(X^n)_{n\in \N}$ converges to $X$ strongly in $L^2(\Omega,\F,P)$, then
$$
\left(\sqrt{n}\, \pi_{\mathcal{H}^n_{\lceil nt\rceil -1}}(\xi^n_{\lceil nt\rceil}\left( \tau_n(X^n) \right)\right)_{t\in [0,\infty)}
\rightarrow \nabla X 
$$
strongly in $L^2(\Omega\times [0,\infty))$, where $\tau_n$ denotes the truncation at $\pm n$.

We also note that, in view of (\ref{eq:hilf0027}), 
$$
\bar \nabla_i X=\frac{(\pi_{\mathcal{H}^n_i}X)-(\pi_{\mathcal{H}^n_{i-1}}X) }{B^n_{i}-B^n_{i-1}}
$$
can be rewritten as difference operator (where we apply the convention $\frac{\xi^n_i}{\xi^n_i}=1$ when $\xi^n_i$ vanishes). 
This representation shows the close relation to the  weak $L^2(\Omega\times [0,\infty))$-approximation result 
for the generalized Clark-Ocone derivative which is derived in \cite[Corollary 4.1]{Leao_Ohashi}, but for the case of symmetric binary noise only.
\end{remark}

\section{Strong $L^2$-approximation of the chaos decomposition}\label{section_chaos_decomposition}

In this section, we apply Theorem \ref{thm_S_transform_convergence_1} in order to characterize strong $L^2(\Omega,\F,P)$-convergence 
of a sequence $(X^n)$ (where $X^n$ can be represented via multiple Wiener integrals with respect to the discrete time noise 
$(\xi^n_i)_{i\in \N}$) via convergence of the coefficient functions of such a discrete chaos decomposition. 

Recall first, that every $X \in L^2(\Omega, \F, P)$ has a unique  Wiener chaos decomposition in terms of multiple Wiener integrals
\begin{align}
X = \sum\limits_{k=0}^{\infty} I^k(f^k_X),\label{eq_Wiener_chaos_decomp} 
\end{align}
where $f^k_X \in \widetilde{L^{2}}([0,\infty)^k)$,  see e.g. \cite[Theorem 1.1.2]{Nualart}.
Here, we denote by $L^{2}([0,\infty)^k)$ the Hilbert space 
 of square-integrable functions with respect to the $k$-dimensional Lebesgue measure
 and by $\widetilde{L^{2}}([0,\infty)^k)$ the subspace of functions in $L^{2}([0,\infty)^k)$ 
 which are symmetric in the $k$ variables.
We apply the standard convention $\widetilde{L^{2}}([0,\infty)^0)
={L^{2}}([0,\infty)^0)=\R$, $I^0(f^0)=f^0$, and recall
that, for $k\geq 1$ and $f^k\in\widetilde{L^{2}}([0,\infty)^k)$, the multiple Wiener integral can be defined as iterated It\^o integral:
$$
I^k(f^k)=k! \int_0^\infty \int_0^{t_k} \cdots \int_0^{t_2} f^k(t_1,\ldots, t_k) dB_{t_1} \cdots dB_{t_{k-1}} dB_{t_k}. 
$$
The It\^o isometry therefore immediately implies the following well-konwn Wiener-It\^o isometry for multiple Wiener integrals,
\begin{equation}\label{eq:multiple_Wiener_isometry}
 \ex[I^{k}(f^{k}) \,I^{k'}(g^{k'})] = \delta_{k,k'} \ k! \ \langle {f^{k}}, {g^{k}}\rangle_{L^{2}([0,\infty)^k}
\end{equation}
for functions $f^k\in \widetilde{L^{2}}([0,\infty)^k)$ and $g^{k'}\in \widetilde{L^{2}}([0,\infty)^{k'})$.

The main theorem of this section now reads as follows:

\begin{theorem}[Wiener chaos limit theorem]\label{thm_S_transform_convergence_2}
Suppose $(X^n)_{n\in \N}$ is a sequence in $L^2(\Omega,\F,P)$. Then the following assertions are equivalent as $n$ tends to infinity:
\begin{enumerate}
\item[(i)] The sequence $(\pi_{\mathcal{H}^n}X^n)$ converges strongly in $L^2(\Omega, \F, P)$.
\item[(ii)] For every $k \in \N_0$, the sequence $(\widehat{f^{n,k}_{X^n}})_{n \in \N}$, defined via
\begin{align}\label{eq_thm_S_transform_convergence_2_integrand}
\widehat{f^{n,k}_{X^n}}(u_1, \ldots, u_k) := \left\{ \begin{array}{cl} 
\ex\left[X^n \frac{n^{k/2}}{k!} \Xi^n_{\{\lceil n u_1\rceil, \ldots, \lceil n u_k\rceil\}}\right], & 
|\{\lceil n u_1\rceil, \ldots, \lceil n u_k\rceil\}\cap \N|=k, \\ 0, & \textnormal{otherwise,}
\end{array}\right.
\end{align}
is strongly convergent in $L^2([0,\infty)^k)$ and 
\begin{equation}\label{eq:limlimsup}
\lim\limits_{m\rightarrow \infty} \limsup\limits_{n \rightarrow \infty} \sum\limits_{k=m}^{\infty} k! \| \widehat{f^{n,k}_{X^n}}\|^2_{{L^2}([0,\infty)^k)} =0. 
\end{equation}
\end{enumerate}
In this case, the limit $X$ of $(\pi_{\mathcal{H}^n}X^n)_{n\in \N}$ has the Wiener chaos decomposition $X = \sum\limits_{k=0}^{\infty} I^k(f^k_X)$ with $f^k_X = 
\lim\limits_{n \rightarrow \infty} \widehat{f^{n,k}_{X^n}}$ in $L^2([0,\infty)^k)$.
\end{theorem}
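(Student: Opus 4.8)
The plan is to reduce both implications to Theorem \ref{thm_S_transform_convergence_1}, applied to the sequence $(\pi_{\mathcal{H}^n}X^n)$ (note $\pi_{\mathcal{H}^n}X^n$ is $\F^n$-measurable), after re-expressing the discrete $S$-transform and the $L^2$-norm through the coefficient functions $\widehat{f^{n,k}_{X^n}}$. First I would record a Parseval identity: since on distinct cells $\widehat{f^{n,k}_{X^n}}$ equals $\frac{n^{k/2}}{k!}\ex[X^n\Xi^n_{\{\lceil nu_1\rceil,\ldots,\lceil nu_k\rceil\}}]$, a cell-counting computation (each cell has volume $n^{-k}$, each set of size $k$ arising from $k!$ ordered tuples) gives $k!\,\|\widehat{f^{n,k}_{X^n}}\|^2_{L^2([0,\infty)^k)}=\sum_{|A|=k}(\ex[X^n\Xi^n_A])^2$, whence by (\ref{Walsh_decomposition_inner_product}) one has $\ex[(\pi_{\mathcal{H}^n}X^n)^2]=\sum_{k\ge0}k!\,\|\widehat{f^{n,k}_{X^n}}\|^2$, in exact analogy with (\ref{eq:multiple_Wiener_isometry}). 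Inserting the Walsh expansion (\ref{eq_discrete_Wick_exp_sum_repr}) of $\exp^{\diamond_n}(I^n(\check g^n))$ and using $\exp^{\diamond_n}(I^n(\check g^n))\in\mathcal{H}^n$ (so $(S^n\pi_{\mathcal{H}^n}X^n)(\check g^n)=(S^nX^n)(\check g^n)$), the same counting yields $(S^nX^n)(\check g^n)=\sum_{k\ge0}\langle\widehat{f^{n,k}_{X^n}},(\check g^n(\lceil n\cdot\rceil))^{\otimes k}\rangle$, while on the Wiener side $(SX)(g)=\sum_{k\ge0}\langle f^k_X,g^{\otimes k}\rangle$. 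Applying (\ref{eq:hilf0001}) coordinatewise, $(\check g^n(\lceil n\cdot\rceil))^{\otimes k}\to g^{\otimes k}$ strongly in $L^2([0,\infty)^k)$, and $\sum_{k\ge0}\frac{1}{k!}\|(\check g^n(\lceil n\cdot\rceil))^{\otimes k}\|^2=\exp(\|\check g^n\|^2_{L^2_n(\N)})$ is bounded uniformly in $n$.

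For $(ii)\Rightarrow(i)$ I would set $f^k:=\lim_n\widehat{f^{n,k}_{X^n}}$, which is symmetric as a limit of symmetric functions; Fatou for series together with the tail condition (\ref{eq:limlimsup}) gives $\sum_k k!\|f^k\|^2<\infty$, so $X:=\sum_{k\ge0}I^k(f^k)$ is a well-defined element of $L^2(\Omega,\F,P)$ with $f^k_X=f^k$. I then verify the two conditions of Theorem \ref{thm_S_transform_convergence_1} for strong convergence. Termwise $\langle\widehat{f^{n,k}_{X^n}},(\check g^n(\lceil n\cdot\rceil))^{\otimes k}\rangle\to\langle f^k,g^{\otimes k}\rangle$ and $k!\|\widehat{f^{n,k}_{X^n}}\|^2\to k!\|f^k\|^2$; the interchange of limit and infinite sum is justified by uniform tail control. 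For the $S$-transform, Cauchy--Schwarz bounds the tail by $\big(\sum_{k\ge m}k!\|\widehat{f^{n,k}_{X^n}}\|^2\big)^{1/2}\exp(\tfrac12\|\check g^n\|^2_{L^2_n(\N)})$, whose first factor is made uniformly small by (\ref{eq:limlimsup}); for the norm, $\sum_k k!\|\widehat{f^{n,k}_{X^n}}\|^2\to\sum_k k!\|f^k\|^2$ follows from termwise convergence of nonnegative terms plus the same tail bound. Theorem \ref{thm_S_transform_convergence_1} then yields (i).

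For $(i)\Rightarrow(ii)$ the crux is extracting individual chaos levels from the $S$-transform, which mixes them; I expect this to be the main obstacle. From (i), $\ex[(\pi_{\mathcal{H}^n}X^n)^2]\to\ex[X^2]$, so $M:=\sup_n\|\pi_{\mathcal{H}^n}X^n\|<\infty$ and each $(\widehat{f^{n,k}_{X^n}})_n$ is bounded in $L^2([0,\infty)^k)$. Fixing $g\in\mathcal{E}$ and testing against $\lambda g\in\mathcal{E}$, I would consider the entire functions $\phi_n(\lambda):=(S^nX^n)(\lambda\check g^n)=\sum_k\langle\widehat{f^{n,k}_{X^n}},(\check g^n(\lceil n\cdot\rceil))^{\otimes k}\rangle\lambda^k$, whose coefficients are bounded by $\frac{M}{\sqrt{k!}}\|\check g^n\|^k_{L^2_n(\N)}$, making $(\phi_n)$ locally uniformly bounded on $\C$, while Theorem \ref{thm_S_transform_convergence_1} gives $\phi_n(\lambda)\to\sum_k\langle f^k_X,g^{\otimes k}\rangle\lambda^k$ for every real $\lambda$. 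By Vitali's convergence theorem the Taylor coefficients converge, so $\langle\widehat{f^{n,k}_{X^n}},(\check g^n(\lceil n\cdot\rceil))^{\otimes k}\rangle\to\langle f^k_X,g^{\otimes k}\rangle$, and since $(\check g^n(\lceil n\cdot\rceil))^{\otimes k}\to g^{\otimes k}$ strongly, $\langle\widehat{f^{n,k}_{X^n}},g^{\otimes k}\rangle\to\langle f^k_X,g^{\otimes k}\rangle$ for every $g\in\mathcal{E}$. As the tensor powers $\{g^{\otimes k}:g\in\mathcal{E}\}$ span a dense subspace of $\widetilde{L^2}([0,\infty)^k)$ by polarization (using density of $\mathcal{E}$ in $L^2([0,\infty))$), boundedness upgrades this to weak convergence $\widehat{f^{n,k}_{X^n}}\rightharpoonup f^k_X$ for every $k$.

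Finally I would pass from weak to strong convergence and derive the tail condition. Setting $a^n_k:=k!\|\widehat{f^{n,k}_{X^n}}\|^2\ge0$ and $a_k:=k!\|f^k_X\|^2$, weak lower semicontinuity gives $\liminf_n a^n_k\ge a_k$, while $\sum_k a^n_k\to\sum_k a_k$ by (i). Along any subsequence realizing $\limsup_n a^n_{k_0}$, Fatou for series forces $\limsup_n a^n_{k_0}\le a_{k_0}$, since otherwise $\liminf_n\sum_k a^n_k\ge\sum_k\liminf_n a^n_k$ would strictly exceed $\sum_k a_k$. Hence $a^n_k\to a_k$, so $\|\widehat{f^{n,k}_{X^n}}\|\to\|f^k_X\|$, which combined with weak convergence gives $\widehat{f^{n,k}_{X^n}}\to f^k_X$ strongly in $L^2([0,\infty)^k)$. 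Since the $a^n_k$ are nonnegative, converge termwise, and have convergent total sum, the finite partial sums converge and $\limsup_n\sum_{k\ge m}a^n_k=\sum_{k\ge m}a_k\to0$ as $m\to\infty$, which is precisely (\ref{eq:limlimsup}). This establishes (ii) and, in both directions, identifies $f^k_X=\lim_n\widehat{f^{n,k}_{X^n}}$, completing the proof.
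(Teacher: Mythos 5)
Your proposal is correct, and while it shares the paper's overall skeleton (reduce everything to Theorem \ref{thm_S_transform_convergence_1} via the Parseval-type identities linking the Walsh decomposition, the discrete $S$-transform, and the coefficient norms), the key technical devices are genuinely different. The paper never manipulates the coefficient functions in isolation the way you do: it works with the discrete multiple Wiener integrals $I^{n,k}(f^{n,k})$ as random variables, and extracts chaos level $k$ in the implication $(i)\Rightarrow(ii)$ by pairing $\pi_{\mathcal{H}^n}X^n$ against the strongly convergent sequence $I^{n,k}((\check{g}^n)^{\otimes k}\eins_{\partial_k^c})\rightarrow I^k(g^{\otimes k})$ (Corollary \ref{cor_observation_Wick_powers_simple_functions}) and invoking orthogonality of different chaos orders; the weak-to-strong upgrade is then the one-line orthogonality trick $\ex[(I^{n,k}(f^{n,k}))^2]=\ex[I^{n,k}(f^{n,k})\,\pi_{\mathcal{H}^n}X^n]\rightarrow\ex[I^k(f^k)X]=\ex[(I^k(f^k))^2]$. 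You instead decouple the chaos levels by scaling the test function, observing that $\lambda\mapsto (S^nX^n)(\lambda\check{g}^n)$ is entire with uniformly controlled coefficients, and applying Vitali's theorem to convert pointwise convergence on $\R$ into Taylor-coefficient convergence; your weak-to-strong upgrade is a termwise Fatou/subsequence argument on the nonnegative sequences $a^n_k$ with convergent totals. Similarly, in $(ii)\Rightarrow(i)$ the paper first proves per-level strong $L^2(\Omega,\F,P)$-convergence of $I^{n,k}(f^{n,k})$ (via Proposition \ref{proposition_discrete_S_transform_mWi_as_integral}) and then does a three-term truncation, whereas you verify the hypotheses of Theorem \ref{thm_S_transform_convergence_1} for the whole sum at once through Cauchy--Schwarz tail bounds driven by (\ref{eq:limlimsup}). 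The trade-off: the paper's route builds and reuses auxiliary results about discrete multiple Wiener integrals (Proposition \ref{proposition_discrete_S_transform_mWi_as_integral}, Corollary \ref{cor_observation_Wick_powers_simple_functions}, Lemma \ref{lem:diagonal}) that serve again in Section \ref{sec:malliavin2}, and stays entirely within Hilbert-space arguments; your route is more self-contained --- it needs only Theorem \ref{thm_S_transform_convergence_1} and elementary cell-counting --- at the cost of importing a complex-analytic tool (Vitali/Montel) to do the work that orthogonality does in the paper. Both arguments are complete; the only points worth stating more explicitly in a write-up are the justification that $\sup_n\sum_k k!\|\widehat{f^{n,k}_{X^n}}\|^2<\infty$ before applying Fatou in $(ii)\Rightarrow(i)$ (finitely many levels converge, the tail is controlled by (\ref{eq:limlimsup})), and that symmetry of the $\widehat{f^{n,k}_{X^n}}$ lets you pass from weak convergence tested on $\{g^{\otimes k}:g\in\mathcal{E}\}$, which is total only in $\widetilde{L^2}([0,\infty)^k)$, to weak convergence in $L^2([0,\infty)^k)$.
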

We recall that, by Remark \ref{rem:projections}, the strong $L^2(\Omega,\mathcal{F},P)$-convergence of $(X^n)$ to $X$ is 
a sufficient condition for the strong approximation of the chaos coefficients of $X$ as stated in
(ii)  of the above theorem.

Before proving  Theorem \ref{thm_S_transform_convergence_2}, we briefly discuss this result. To this end, we first  recall the relation between
Walsh decomposition and discrete chaos decomposition.
The discrete multiple Wiener integrals are defined analogously to the continuous setting, see e.g. \cite[Section 1.3]{Privault_book}.
For all $k,n \in \N$ we consider the Hilbert space 
\[
L^2_n(\N^k) := \left\{f^{n,k}: \N^k \rightarrow \R : \,\sum_{(i_1, \ldots, i_k) \in \N^k} \left(f^{n,k}(i_1,\ldots, i_k)\right)^2<\infty\right\} 
\]
endowed with the inner product 
\[
\langle f^{n,k}, g^{n,k} \rangle_{L^2_n(\N^k)} := n^{-k}\sum_{(i_1, \ldots, i_k) \in \N^k} f^{n,k}(i_1,\ldots, i_k) g^{n,k}(i_1,\ldots, i_k). 
\]
The closed subspace of symmetric functions in $L^2_n(\N^k)$ which vanish on the diagonal part
$$
\partial_k:=\left\{(i_1,\ldots,i_k)\in \N^k:\;  | \{i_1,\ldots,i_k \}|<k \right\}
$$
is denoted by $\widetilde{L^2_n}(\N^k)$.

 Then, for $k\in \N$,
the discrete multiple Wiener integral of $f^{n,k}\in \widetilde{L^2_n}(\N^k)$ with respect to the random walk $B^n$ is defined as
\[
I^{n,k}(f^{n,k}) := n^{-k/2} k!\sum_{\substack{(i_1, \ldots, i_k) \in \N^k, \ i_1<\cdots<i_k}} f^{n,k}(i_1, \ldots, i_k) \Xi^n_{\{i_1,\ldots, i_k\}}. 
\] 
We notice that $I^{n,k}$ is linear on $\widetilde{L^2_n}(\N^k)$ and fulfills $\ex[I^{n,k}(f^{n,k})]=0$ as well as
the isometry 
\begin{align}\label{eq_discrete_multiple_Wiener_covariance}
\ex[I^{n,k}(f^{n,k}) I^{n,k'}(g^{n,k'})] &= \delta_{k,k'} \ k! \ \langle {f^{n,k}}, {g^{n,k}}\rangle_{L^2_n(\N^k)}
\end{align}
for $f^{n,k} \in \widetilde{L^2_n}(\N^k)$, $g^{n,k'} \in  \widetilde{L^2_n}(\N^{k'})$ and possibly different orders $k, k' \in \N$.
As in the continuous time setting, we apply the convention that $I^{n,0}$ is the identity on $\widetilde{L^2_n}(\N^0):=\R$, 
and refer to \cite[Section 1.3]{Privault_book} for further properties of such discrete multiple Wiener integrals.

We now fix $X^n\in \mathcal{H}^n$. In view of the Walsh decomposition $X^n = \sum_{|A|<\infty} \ex[X^n\Xi_{A}^n]\Xi_{A}^n$, we observe that 
the discrete analog of the Wiener chaos decomposition
\begin{align}
X^n &= \sum\limits_{k=0}^{\infty} n^{-k/2}k! \sum\limits_{\substack{(i_1, \ldots, i_k) \in \N^k, i_1<\cdots <i_k}} \frac{n^{k/2}}{k!}X^n_{\{i_1,\ldots, i_k\}} \Xi^n_{\{i_1,\ldots, i_k\}} =\sum\limits_{k=0}^{\infty} I^{n,k}(f^{n,k}_{X^n}),\label{eq_discrete_chaos_decomposition}
\end{align}
holds for the integrands $f^{n,k}_{X^n} \in \widetilde{L^2_n}(\N^k)$ given by
\begin{align}\label{eq_discrete_chaos_integrand}
f^{n,k}_{X^n}(i_1, \ldots, i_k) := \left\{ \begin{array}{cl} \ex\left[\frac{n^{k/2}}{k!} X^n \Xi^n_{\{i_1, \ldots, i_k\}}\right], & |\{i_1, \ldots, i_k\}\cap \N|=k \\ 
0, & \textnormal{otherwise}. \end{array}\right.
 \end{align}
Hence, this discrete analog of the Wiener chaos decomposition \eqref{eq_Wiener_chaos_decomp}
for random variables in $\mathcal{H}^n$ is just a reformulation of the Walsh decomposition \eqref{Walsh_decomposition}.

Given a general element $f^{n,k} \in \widetilde{L^2_{n}}(\N^k)$ we define its embedding into simple continuous time functions in $k$ variables as
 \begin{align}
 \widehat{f^{n,k}}(u_1, \ldots, u_k) &:= f^{n,k}\left(\lceil n u_1\rceil, \ldots, \lceil n u_k\rceil\right)\nonumber\\  
&= \sum\limits_{i_1, \ldots, i_k =1}^{\infty} f^{n,k}(i_1,\ldots, i_k) \eins_{(\frac{i_1-1}{n}, \frac{i_1}{n}] \times \cdots \times (\frac{i_k-1}{n}, \frac{i_k}{n}]}(u_1, \ldots, u_k),\label{eq_embedding_of_discrete_function}
 \end{align}
which is consistent with the notation already applied in (\ref{eq_thm_S_transform_convergence_2_integrand}) and (\ref{eq_discrete_chaos_integrand}).
Here and in what follows, we apply the convention that $f^{n,k}$ vanishes when one of its arguments is set to zero.

We can now rephrase Theorem \ref{thm_S_transform_convergence_2} in the following way: \\[0.2cm]
{\it The sequence $(X^n)$, with $X^n\in \mathcal{H}^n$, converges to $X$ strongly in $L^2(\Omega,\F,P)$, if and only if, for all orders $k\in \N_0$, the sequence of coefficient functions 
of the discrete chaos decomposition of $X^n$ converge (after the natural embedding into continuous time) to the coefficient functions 
of the Wiener chaos of $X$ strongly in $L^2([0,\infty)^k)$ and the tail condition (\ref{eq:limlimsup}) is satisfied.}
\\ 
\begin{remark}\label{rem_main_theorem}
 Convergence of discrete multiple Wiener integrals to continuous multiple Wiener integrals was studied in \cite{Surgailis}  as a main tool 
for proving noncentral limit theorems. The results in Section 4 of the latter reference imply that, for every $k\in \N_0$,
the sequence of discrete multiple Wiener integrals $(I^{n,k}(f^{n,k}))_{n\in \N}$ converges in distribution
to the multiple Wiener integral $I^k(f^k)$, if $(\widehat{f^{n,k}})_{n\in \N}$ converges to $f^k$ strongly in $L^2([0,\infty)^k)$.
Our result lifts this convergence in distribution to strong $L^2(\Omega,\F,P)$-convergence and adds the converse:
$$
L^2(\Omega,\F,P)\textnormal{-}\lim_{n\rightarrow \infty} I^{n,k}(f^{n,k})=I^k(f^k) \; \Leftrightarrow \; L^2([0,\infty)^k)\textnormal{-}\lim_{n\rightarrow \infty}  \widehat{f^{n,k}}=f^k.
$$
We note that the $L^2(\Omega,\F,P)$-convergence of the sequence $(I^{n,k}(f^{n,k}))$ even implies convergence in 
$L^p(\Omega,\F,P)$ for $p>2$, if $\ex[|\xi|^r]<\infty$ for some $r>p$. Indeed, in this case, the sequence  $(|I^{n,k}(f^{n,k})|^p)$ 
is uniformly integrable by the hypercontractivity inequality of \cite{KS} in the variant of \cite[Proposition 5.2]{BT}.
\end{remark}

The following elementary corollary of Theorem \ref{thm_S_transform_convergence_2} generalizes Proposition \ref{proposition_Wiener_and_Wick_exp}. It makes use of the fact 
that the chaos decompositions of (discrete) Wick exponentials are given,
for all $f \in L^2([0,\infty))$, $f^n \in L^2_n(\N)$, by
\begin{equation}\label{eq:exponentials_chaos}
\exp^{\diamond}(I(f)) = \sum_{k=0}^\infty I^k(\frac{1}{k!} f^{{\otimes} k}), \quad 
\exp^{\diamond_n}(I^n(f^n)) = \sum_{k=0}^{\infty} I^{n,k}(\frac{1}{k!}(f^n)^{{\otimes} k}\eins_{\partial_k^c}).
\end{equation}
For a proof of the continuous case see e.g. \cite[Theorem 3.21, Theorem 7.26]{Janson}. The statement
of the discrete case is a direct consequence of \eqref{eq_discrete_Wick_exp_sum_repr}.

\begin{corollary}
 Suppose $f\in L^2([0,\infty))$ and $(f^n)$ is a sequence with $f^n\in L^2_n(\N)$ for every $n \in \N$. Then, as $n$ tends to infinity
 (in the sense of strong convergence),
 \begin{align*}
\widehat{f^n} \rightarrow f \textnormal{ in } L^2([0,\infty)) &\Leftrightarrow  I^n(f^n) \rightarrow I(f) \textnormal{ in } L^2(\Omega,\mathcal{F},P)\\
&\Leftrightarrow \exp^{\diamond_n}(I^n(f^n)) \rightarrow \exp^{\diamond}(I(f)) \textnormal{ in } L^2(\Omega,\mathcal{F},P) .
 \end{align*}
   \end{corollary}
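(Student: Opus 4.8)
The plan is to deduce all three equivalences from the Wiener chaos limit theorem (Theorem \ref{thm_S_transform_convergence_2}), exploiting that both $I^n(f^n)=I^{n,1}(f^n)$ and $\exp^{\diamond_n}(I^n(f^n))$ already lie in $\mathcal{H}^n$ (the latter by \eqref{eq_discrete_Wick_exp_sum_repr} together with \eqref{eq:exponential_L2}), so that $\pi_{\mathcal{H}^n}$ acts as the identity on them and their discrete chaos coefficients can be read off directly. I would prove the cycle $(1)\Leftrightarrow(2)$, $(3)\Rightarrow(1)$, and $(1)\Rightarrow(3)$, of which only the last requires genuine work. Throughout I use the embedding isometry $\|\widehat{f^{n,k}}\|^2_{L^2([0,\infty)^k)}=\|f^{n,k}\|^2_{L^2_n(\N^k)}$, which holds since each grid cube has volume $n^{-k}$.

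For $(1)\Leftrightarrow(2)$ I would apply Theorem \ref{thm_S_transform_convergence_2} to $X^n:=I^n(f^n)$. Its discrete chaos decomposition has $f^{n,1}_{X^n}=f^n$ and $f^{n,k}_{X^n}=0$ for $k\neq1$, so $\widehat{f^{n,1}_{X^n}}=\widehat{f^n}$ and all higher embedded coefficients vanish. The tail condition \eqref{eq:limlimsup} is then automatic, and condition (ii) of the theorem collapses to strong convergence of $(\widehat{f^n})$ in $L^2([0,\infty))$; moreover the limit is identified as $I^1(\lim_n\widehat{f^n})$. Hence $\widehat{f^n}\to f$ in $L^2([0,\infty))$ iff $I^n(f^n)\to I^1(f)=I(f)$ strongly. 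For $(3)\Rightarrow(1)$ I would apply the (convergence $\Rightarrow$ coefficientwise convergence) direction of the same theorem to $Y^n:=\exp^{\diamond_n}(I^n(f^n))$: its first chaos coefficient is, by \eqref{eq:exponentials_chaos}, exactly $f^n$ (since $\partial_1=\emptyset$), while the first chaos coefficient of the limit $\exp^{\diamond}(I(f))$ is $f$, so strong convergence of $(Y^n)$ forces $\widehat{f^n}\to f$.

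The substance is $(1)\Rightarrow(3)$, where I must verify both clauses of (ii) in Theorem \ref{thm_S_transform_convergence_2} for $Y^n$, whose embedded chaos coefficients are $\widehat{f^{n,k}_{Y^n}}=\frac{1}{k!}\widehat{(f^n)^{\otimes k}\eins_{\partial_k^c}}$ by \eqref{eq:exponentials_chaos}. Discarding $\eins_{\partial_k^c}$ and using the isometry gives the clean bound $k!\,\|\widehat{f^{n,k}_{Y^n}}\|^2_{L^2([0,\infty)^k)}\le \|f^n\|^{2k}_{L^2_n(\N)}/k!$; since $\|f^n\|^2_{L^2_n(\N)}=\|\widehat{f^n}\|^2_{L^2([0,\infty))}\to\|f\|^2_{L^2([0,\infty))}$ stays bounded, the tail $\sum_{k\ge m}k!\,\|\widehat{f^{n,k}_{Y^n}}\|^2$ is dominated by the tail of $\exp(\|f^n\|^2_{L^2_n(\N)})$, which yields \eqref{eq:limlimsup}. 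For the coefficientwise convergence I would split $\widehat{f^{n,k}_{Y^n}}$ into $\frac{1}{k!}\widehat{f^n}^{\otimes k}$ and the ``diagonal defect'' $\frac{1}{k!}\widehat{f^n}^{\otimes k}\eins_{\{\text{some }\lceil nu_a\rceil=\lceil nu_b\rceil\}}$. The first converges to $\frac{1}{k!}f^{\otimes k}$ by continuity of tensor powers (a telescoping estimate using $\widehat{f^n}\to f$ in $L^2$ and boundedness of $\|\widehat{f^n}\|$), which by \eqref{eq:exponentials_chaos} is precisely the $k$-th coefficient of $\exp^{\diamond}(I(f))$.

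The hard part will be showing the diagonal defect vanishes in $L^2([0,\infty)^k)$ for each fixed $k$. A union bound over the $\binom{k}{2}$ coinciding pairs reduces it, after factoring the tensor structure, to $n^{-2}\sum_i (f^n(i))^4=\frac{1}{n}\|\widehat{f^n}\|^4_{L^4([0,\infty))}$ times powers of $\|\widehat{f^n}\|^2_{L^2([0,\infty))}$, so everything hinges on the uniform-smallness lemma $\sup_{i\in\N}(f^n(i))^2/n\to0$. This I would prove by writing $(f^n(i))^2/n=\int_{((i-1)/n,\,i/n]}\widehat{f^n}(u)^2\,du$ and bounding it by $2\|\widehat{f^n}-f\|^2_{L^2([0,\infty))}+2\sup_i\int_{((i-1)/n,\,i/n]}f(u)^2\,du$; the first term is uniformly small by (1), and the second tends to $0$ because $t\mapsto\int_0^t f^2$ is absolutely continuous with $\int_N^\infty f^2\to0$, so its increments over the length-$1/n$ grid intervals vanish uniformly. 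With both clauses of (ii) established, Theorem \ref{thm_S_transform_convergence_2} delivers $Y^n\to\exp^{\diamond}(I(f))$ strongly, closing the cycle.
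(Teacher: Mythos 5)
Your proposal is correct, and its skeleton---reducing all three equivalences to Theorem \ref{thm_S_transform_convergence_2} via the Wick-exponential chaos expansions (\ref{eq:exponentials_chaos}), with the equivalence of the first two statements and the implication from the third to the first being immediate, and the remaining implication carrying all the weight---is exactly the paper's. The divergence is in how the diagonal defect is killed. The paper isolates this step as Lemma \ref{lem:diagonal}: part (ii) treats $\widehat{(f^n)^{\otimes k}}\rightarrow f^{\otimes k}$ by continuity of tensor powers (as you do), and part (i) removes the diagonal by a soft argument: the indicator $\eins_{\{|\{\lceil nu_1\rceil,\ldots,\lceil nu_k\rceil\}|<k\}}$ converges almost everywhere to the indicator of the exact diagonal, a Lebesgue null set, while the integrands $|\widehat{f^{n,k}}|^2$ inherit uniform integrability from the assumed $L^2$-convergence, so one may interchange limit and integration. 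Your argument is instead quantitative: a union bound over coinciding pairs bounds the squared defect by $\binom{k}{2}\,\bigl(\sup_i (f^n(i))^2/n\bigr)\,\|\widehat{f^n}\|^{2(k-1)}_{L^2([0,\infty))}$, and the uniform-smallness estimate $\sup_i (f^n(i))^2/n\rightarrow 0$ follows from absolute continuity of the integral of $f^2$; both steps are sound. What the paper's route buys is generality: Lemma \ref{lem:diagonal}(i) applies to an arbitrary sequence $\widehat{f^{n,k}}\rightarrow f^k$ in $L^2([0,\infty)^k)$ with no product structure, and is reused later (in Proposition \ref{proposition_discrete_S_transform_mWi_as_integral}, Corollary \ref{cor_observation_Wick_powers_simple_functions}, and Theorem \ref{thm:Skorokhod_convergence_2}), whereas your union-bound reduction genuinely needs the tensor-power form of the coefficients, so it proves exactly what this corollary needs but not the lemma in its reusable form. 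What your route buys is an elementary, subsequence-free argument with an explicit bound on the defect, avoiding the a.e.-convergence and uniform-integrability machinery; you also verify the tail condition (\ref{eq:limlimsup}) explicitly, which the paper leaves implicit in the phrase ``we only have to show''.
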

\begin{proof}
 In view of Theorem \ref{thm_S_transform_convergence_2} and (\ref{eq:exponentials_chaos}), we only have to show that 
 $ \widehat{f^n}\rightarrow f \textnormal{ strongly in } L^2([0,\infty))$
 implies that  $ \widehat{(f^n)^{\otimes k}\eins_{\partial_k^c}}\rightarrow f^{\otimes k} \textnormal{ strongly in } L^2([0,\infty)^k)$, for every $k\geq 2$.
 This is a consequence of the following lemma.
 \end{proof}

 \begin{lemma}\label{lem:diagonal}
(i) Fix $k\in \N_0$. Suppose $(f^{n,k})_{n\in\N}$ is a sequence such that $f^{n,k}\in L^2_n(\N^k)$ for every $n \in \N$ and $(\widehat{f^{n,k}})$ converges 
to some $f^k$ strongly in $L^2([0,\infty)^k)$. Then, the sequence $(\widehat{f^{n,k} \eins_{\partial_k^c}})$ converges to $f^k$ strongly in $L^2([0,\infty)^k)$
as well. \\[0.1cm]
(ii) Suppose $(f^{n})_{n\in\N}$ is a sequence such that $f^{n}\in L^2_n(\N)$ for every $n \in \N$ and $(\widehat{f^{n}})$ converges 
to some $f$ strongly in $L^2([0,\infty))$. Then, for every $k\geq 2$,  the sequences $(\widehat{(f^{n})^{\otimes k} })$ and  $(\widehat{(f^{n})^{\otimes k}\eins_{\partial_k^c} })$
converge to $f^{\otimes k}$ strongly in $L^2([0,\infty)^k)$.
 \end{lemma}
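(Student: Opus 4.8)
The plan is to deduce (ii) from (i) together with the continuity of the tensor product, so the heart of the matter is (i), which asserts that deleting the discrete diagonal $\partial_k$ does not change the $L^2$-limit. Throughout I will use that the embedding $f^{n,k}\mapsto \widehat{f^{n,k}}$ is an isometry from $(L^2_n(\N^k),\|\cdot\|_{L^2_n(\N^k)})$ into $L^2([0,\infty)^k)$, since $\widehat{f^{n,k}}$ is constant equal to $f^{n,k}(i_1,\ldots,i_k)$ on each cell of volume $n^{-k}$.

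For (i), I would split $f^{n,k}=f^{n,k}\eins_{\partial_k^c}+f^{n,k}\eins_{\partial_k}$ and note that the embedding commutes with these indicators, so that $\widehat{f^{n,k}\eins_{\partial_k}}=\widehat{f^{n,k}}\,\eins_{D_n}$, where $D_n:=\{u\in[0,\infty)^k:(\lceil n u_1\rceil,\ldots,\lceil n u_k\rceil)\in\partial_k\}$ is a $1/n$-thickening of the continuous diagonal $D:=\{u: u_i=u_j\text{ for some }i\neq j\}$. Since $\widehat{f^{n,k}}\to f^k$ strongly, it suffices to show $\|\widehat{f^{n,k}}\,\eins_{D_n}\|_{L^2([0,\infty)^k)}\to 0$. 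The geometric input is that for every $u\notin D$ the coordinates $u_i$ lie in pairwise distinct cells once $n>1/\min_{i\neq j}|u_i-u_j|$, so that $\eins_{D_n}\to 0$ pointwise off the Lebesgue-null set $D$.

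The main obstacle is that the integrand $\widehat{f^{n,k}}^{\,2}$ itself varies with $n$ and could a priori concentrate near the diagonal, so one cannot merely integrate a fixed function over a shrinking set. I would resolve this in two steps. First, strong $L^2$-convergence upgrades to $L^1$-convergence of the squares, because by Cauchy--Schwarz $\|\widehat{f^{n,k}}^{\,2}-(f^k)^2\|_{L^1}\leq \|\widehat{f^{n,k}}-f^k\|_{L^2}\,\|\widehat{f^{n,k}}+f^k\|_{L^2}\to 0$, the second factor being bounded. Second, for the fixed limit I apply dominated convergence: $(f^k)^2\eins_{D_n}\leq (f^k)^2\in L^1$ and $(f^k)^2\eins_{D_n}\to 0$ almost everywhere, whence $\int (f^k)^2\eins_{D_n}\,du\to 0$. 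Combining the two,
\[
\int_{[0,\infty)^k} \widehat{f^{n,k}}^{\,2}\,\eins_{D_n}\,du\leq \|\widehat{f^{n,k}}^{\,2}-(f^k)^2\|_{L^1}+\int_{[0,\infty)^k} (f^k)^2\,\eins_{D_n}\,du\longrightarrow 0,
\]
which proves (i).

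For (ii), I would first observe that the embedding commutes with tensor powers, namely $\widehat{(f^n)^{\otimes k}}=(\widehat{f^n})^{\otimes k}$, since $\widehat{(f^n)^{\otimes k}}(u)=\prod_{j=1}^k f^n(\lceil n u_j\rceil)=\prod_{j=1}^k \widehat{f^n}(u_j)$. The $k$-fold tensor product is multilinear and isometric, $\|g_1\otimes\cdots\otimes g_k\|_{L^2([0,\infty)^k)}=\prod_{j=1}^k\|g_j\|_{L^2([0,\infty))}$, so the telescoping identity
\[
(\widehat{f^n})^{\otimes k}-f^{\otimes k}=\sum_{j=1}^k f^{\otimes(j-1)}\otimes(\widehat{f^n}-f)\otimes(\widehat{f^n})^{\otimes(k-j)},
\]
together with the boundedness of $\|\widehat{f^n}\|_{L^2}$ and $\|\widehat{f^n}-f\|_{L^2}\to 0$, gives $(\widehat{f^n})^{\otimes k}\to f^{\otimes k}$ strongly in $L^2([0,\infty)^k)$. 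This settles the first convergence in (ii). Applying part (i) with $f^{n,k}=(f^n)^{\otimes k}$ and $f^k=f^{\otimes k}$ then removes the diagonal and yields $\widehat{(f^n)^{\otimes k}\eins_{\partial_k^c}}\to f^{\otimes k}$ strongly, completing the proof.
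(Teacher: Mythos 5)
Your proof is correct and follows essentially the same route as the paper: the same splitting off of the diagonal part, the same observation that the discrete-diagonal indicators converge pointwise to the indicator of the Lebesgue-null continuous diagonal, and the same reduction of (ii) to (i) via commutation of the embedding with tensor powers. The only difference is technical: where the paper justifies the vanishing of the diagonal term by uniform integrability of $(\widehat{f^{n,k}})^2$ inherited from $L^2$-convergence together with an interchange of limit and integration, you use an explicit $L^1$-comparison with the fixed limit $(f^k)^2$ plus dominated convergence, which is an equally valid and, if anything, slightly more self-contained device.
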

 \begin{proof}
 (i) We decompose,
 $$
 \|\widehat{f^{n,k}\eins_{\partial_k^c}}- f^{k}\|_{L^2([0,\infty)^k)}\leq  \|\widehat{f^{n,k}\eins_{\partial_k^c}} 
 -\widehat{f^{n,k}}\|_{L^2([0,\infty)^k) }+
 \|\widehat{f^{n,k}}- f^{ k}\|_{L^2([0,\infty)^k)} .
 $$
 The second term goes to zero by assumption. The first one equals
$$
\left(\int_{[0,\infty)^k}  |f^{n,k} (\lceil nu_1\rceil,\ldots,\lceil nu_k\rceil)|^2 \eins_{\{|\{\lceil nu_1\rceil,\ldots, \lceil nu_k\rceil \}|<k \}}  \right)^{1/2}.
$$
The sequence of integrands tends to 0 almost everywhere, because 
$$\lim_{n\rightarrow \infty} \eins_{\{|\{\lceil nu_1\rceil,\ldots, \lceil nu_k\rceil \}|<k \}}=\eins_{\{\, u_l=u_p,\;\textnormal{ for some }l\neq p\}}.$$
Moreover, the sequence of integrands inherits uniform integrability from the $L^2([0,\infty)^k)$-convergent series 
$(\widehat{f^{n,k}})$. Therefore, the first term goes to zero by interchanging limit and integration. \\[0.1cm]
(ii) As tensor powers commute with discretization and embedding, i.e. 
\begin{align}
(\check{g}^n)^{{\otimes} k} = (\check{(g)^{{\otimes} k}})^n, \qquad \widehat{h^n}^{{\otimes} k} = \widehat{(h^n)^{{\otimes} k}} \label{eq_symmetrization_embedding_discretization}
\end{align}
for all $k \in \N$, $g \in \mathcal{E}$, $h^n \in L^2_n(\N)$, and as the tensor product is continuous, 
we observe inductively that  $ \widehat{(f^n)^{\otimes k}}\rightarrow f^{\otimes k}$  strongly in  $L^2([0,\infty)^k)$. 
Then, for the second sequence, part (i) applies.
\end{proof}

We now start to prepare the proof of Theorem \ref{thm_S_transform_convergence_2}.

\begin{proposition}\label{proposition_discrete_S_transform_mWi_as_integral}
Let $k\in \N_0$. Then, for all $g \in \mathcal{E}$ and sequences $(f^{n,k})_{n\in \N}$ such that $f^{n,k} \in \widetilde{L^2_n}(\N^k)$ and $\sup_{n \in \N}\|f^{n,k}\|_{L^2_n(\N)} < \infty$,
$$
\lim_{n\rightarrow \infty} \left|(S^n I^{n,k}(f^{n,k}))(\check{g}^n) - (S I^{k}(\widehat{f^{n,k}}))(g) \right|=0.
$$

\end{proposition}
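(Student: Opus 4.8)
The plan is to evaluate both $S$-transforms in closed form through the chaos expansions of the (discrete) Wick exponentials, and then to compare the resulting pairings by Cauchy--Schwarz. First I would treat the continuous term. Since $\widehat{f^{n,k}}$ is symmetric (because $f^{n,k}\in\widetilde{L^2_n}(\N^k)$), inserting the chaos decomposition $\exp^{\diamond}(I(g))=\sum_{j\geq 0}I^j(\frac{1}{j!}g^{\otimes j})$ from (\ref{eq:exponentials_chaos}) and using the Wiener--It\^o isometry (\ref{eq:multiple_Wiener_isometry}) to isolate the $j=k$ chaos gives
$$
(S I^{k}(\widehat{f^{n,k}}))(g)=\langle \widehat{f^{n,k}},\, g^{\otimes k}\rangle_{L^2([0,\infty)^k)}.
$$

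Next I would handle the discrete term in exactly the same spirit. The discrete Wick exponential expands as $\exp^{\diamond_n}(I^n(\check g^n))=\sum_{j\geq 0}I^{n,j}(\frac{1}{j!}(\check g^n)^{\otimes j}\eins_{\partial_j^c})$ by (\ref{eq:exponentials_chaos}), and the discrete isometry (\ref{eq_discrete_multiple_Wiener_covariance}) again isolates the $j=k$ term, so that
$$
(S^n I^{n,k}(f^{n,k}))(\check g^n)=\langle f^{n,k},\, (\check g^n)^{\otimes k}\eins_{\partial_k^c}\rangle_{L^2_n(\N^k)}.
$$
The key bookkeeping point is that discretization is an isometry onto step functions: for any $a^{n,k},b^{n,k}\in L^2_n(\N^k)$ one has $\langle a^{n,k},b^{n,k}\rangle_{L^2_n(\N^k)}=\langle \widehat{a^{n,k}},\widehat{b^{n,k}}\rangle_{L^2([0,\infty)^k)}$, since each cell $(\frac{i_1-1}{n},\frac{i_1}{n}]\times\cdots\times(\frac{i_k-1}{n},\frac{i_k}{n}]$ has Lebesgue measure $n^{-k}$. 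Applying this identity rewrites the discrete $S$-transform as $\langle \widehat{f^{n,k}},\,\widehat{(\check g^n)^{\otimes k}\eins_{\partial_k^c}}\rangle_{L^2([0,\infty)^k)}$.

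With both expressions sharing the left argument $\widehat{f^{n,k}}$, the difference is $\langle \widehat{f^{n,k}},\, \widehat{(\check g^n)^{\otimes k}\eins_{\partial_k^c}}-g^{\otimes k}\rangle_{L^2([0,\infty)^k)}$, so by Cauchy--Schwarz it is bounded by $\|\widehat{f^{n,k}}\|_{L^2([0,\infty)^k)}\,\|\widehat{(\check g^n)^{\otimes k}\eins_{\partial_k^c}}-g^{\otimes k}\|_{L^2([0,\infty)^k)}$. The first factor equals $\|f^{n,k}\|_{L^2_n(\N^k)}$ by the isometry identity above and is bounded uniformly in $n$ by hypothesis. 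For the second factor I note that $\widehat{\check g^n}=\check g^n(\lceil n\cdot\rceil)\to g$ strongly in $L^2([0,\infty))$ by (\ref{eq:hilf0001}); hence Lemma \ref{lem:diagonal}(ii) yields $\widehat{(\check g^n)^{\otimes k}\eins_{\partial_k^c}}\to g^{\otimes k}$ strongly in $L^2([0,\infty)^k)$ for $k\geq 2$, while the cases $k=0,1$ are immediate (then $\partial_k$ is empty and the claim reduces to (\ref{eq:hilf0001}) or is trivial). Thus the second factor vanishes and the bounded first factor forces the product to zero.

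The step I expect to be the main obstacle is the passage through the diagonal indicator $\eins_{\partial_k^c}$: the discrete multiple Wiener integral genuinely lives off the diagonal, so the $k$th chaos coefficient of $\exp^{\diamond_n}(I^n(\check g^n))$ carries this factor, and one must check that its presence does not perturb the $L^2$-limit. This is exactly the content of Lemma \ref{lem:diagonal}(ii), whose proof relies on the discrete diagonal collapsing to the Lebesgue-null continuous diagonal together with the uniform integrability inherited from the $L^2$-convergent embeddings. Everything else is a routine application of the two isometries and Cauchy--Schwarz.
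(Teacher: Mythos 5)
Your proposal is correct and follows essentially the same route as the paper: both compute the two $S$-transforms as pairings of $\widehat{f^{n,k}}$ against $\widehat{(\check g^n)^{\otimes k}}$ (resp.\ $g^{\otimes k}$) via the chaos expansions of the Wick exponentials and the two isometries, then conclude by Cauchy--Schwarz, the uniform bound on $\|f^{n,k}\|_{L^2_n(\N^k)}$, and Lemma \ref{lem:diagonal}. The only cosmetic difference is that you keep the indicator $\eins_{\partial_k^c}$ on the $g$-side while the paper absorbs it using that $f^{n,k}$ vanishes on the diagonal; both variants are covered by Lemma \ref{lem:diagonal}(ii).
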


\begin{proof}
First note that, by (\ref{eq_discrete_multiple_Wiener_covariance}), (\ref{eq:exponentials_chaos}), and as $f^{n,k}$ vanishes on the diagonal $\partial_k$,
\begin{align*}
\displaybreak[0]
&(S^n \ I^{n,k}(f^{n,k}))(\check{g}^n) = \ex\left[I^{n,k}(f^{n,k}) \exp^{\diamond_n}(I^n(\check{g}^n))\right]
 = \langle f^{n,k}, (\check{g}^n)^{{\otimes} k} \rangle_{L^2_{n}(\N^k)}\nonumber\\
&= \int_{[0,\infty)^k} \widehat{f^{n,k}}(x_1, \ldots, x_k)  \widehat{(\check{g}^n)^{{\otimes} k}}(x_1, \ldots, x_k) dx_1 \cdots dx_k.\label{eq_S_transform_of_multiple_Wiener_integral}
\end{align*}
Analogously, making use of the Wiener-It\^o isometry for the continuous chaos decomposition (\ref{eq:multiple_Wiener_isometry}) 
instead of (\ref{eq_discrete_multiple_Wiener_covariance}), we get
$$
(S I^{k}(\widehat{f^{n,k}}))(g)=\int_{[0,\infty)^k} \widehat{f^{n,k}}(x_1, \ldots, x_k)  g^{{\otimes} k}(x_1, \ldots, x_k) dx_1 \cdots dx_k.
$$
Hence, by the Cauchy-Schwarz inequality, we conclude
\begin{align*}
\displaybreak[0]
&\left|(S^n I^{n,k}(f^{n,k}))(\check{g}^n) -  (S I^{k}(\widehat{f^{n,k}}))(g)\right|\\ 
\displaybreak[0]
&= \left|\int_{[0,\infty)^k} \widehat{f^{n,k}}(x_1, \ldots, x_k)  \left(\widehat{(\check{g}^n)^{{\otimes} k}} -g^{{\otimes} k}\right)(x_1, \ldots, x_k) dx_1 \cdots dx_k \right|\\ 
\displaybreak[0]
&\leq \left(\sup_{m \in \N}\|f^{m,k}\|_{L^2_n(\N)}\right)^{1/2}\|g^{{\otimes} k}- \widehat{(\check{g}^n)^{{\otimes} k}}\|_{L^2([0,\infty)^k)},
\end{align*}
which tends to zero for $n\rightarrow \infty$ by Lemma \ref{lem:diagonal}.
\end{proof}

\begin{corollary}\label{cor_observation_Wick_powers_simple_functions}
 Suppose $g\in \mathcal{E}$. Then, for every $k\in\N$, 
 \begin{align*}
I^{n,k}((\check{g}^n)^{{\otimes} k} \eins_{\partial_k^c}) \rightarrow I^k(g^{{\otimes} k})
\end{align*}
strongly in $L^2(\Omega, \F, P)$. 
\end{corollary}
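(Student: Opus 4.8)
The plan is to identify the strong limit of $X^n := I^{n,k}((\check g^n)^{\otimes k}\eins_{\partial_k^c})$ through the $S$-transform characterization of strong $L^2(\Omega,\F,P)$-convergence in Theorem \ref{thm_S_transform_convergence_1}, applied to the $\F^n$-measurable random variables $X^n$. Throughout I would write $f^{n,k} := (\check g^n)^{\otimes k}\eins_{\partial_k^c} \in \widetilde{L^2_n}(\N^k)$ (symmetric and vanishing on the diagonal by construction), reserve $g$ for the fixed function from the statement, and use $h \in \mathcal{E}$ for the $S$-transform test function.

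First I would verify the standing hypothesis of Proposition \ref{proposition_discrete_S_transform_mWi_as_integral}. Since deleting the off-diagonal indicator only decreases the norm, $\|f^{n,k}\|_{L^2_n(\N^k)}^2 \le (\|\check g^n\|_{L^2_n(\N)}^2)^k$, and the right-hand side converges to $(\int_0^\infty g(s)^2\,ds)^k$ by (\ref{eq:hilf0001}); in particular $\sup_{n\in\N}\|f^{n,k}\|_{L^2_n(\N^k)}<\infty$. Next I would establish the strong $L^2([0,\infty)^k)$-convergence $\widehat{f^{n,k}} \to g^{\otimes k}$. For $k=1$ the diagonal $\partial_1$ is empty and this is just (\ref{eq:hilf0001}); for $k\ge 2$ it follows from Lemma \ref{lem:diagonal}(ii), whose hypothesis $\widehat{\check g^n}\to g$ in $L^2([0,\infty))$ is again (\ref{eq:hilf0001}).

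With these two facts in hand, the $S$-transform convergence follows. By the continuity of the continuous-time multiple Wiener integral (the Wiener--It\^o isometry (\ref{eq:multiple_Wiener_isometry})), $\widehat{f^{n,k}}\to g^{\otimes k}$ yields $I^k(\widehat{f^{n,k}}) \to I^k(g^{\otimes k})$ strongly in $L^2(\Omega,\F,P)$, hence $(S\,I^k(\widehat{f^{n,k}}))(h) \to (S\,I^k(g^{\otimes k}))(h)$ for every $h\in\mathcal{E}$, the $S$-transform being the inner product against the fixed vector $\exp^{\diamond}(I(h))$. Combining this with Proposition \ref{proposition_discrete_S_transform_mWi_as_integral} (applied with test function $h$) by the triangle inequality gives $(S^n X^n)(\check h^n) \to (S\,I^k(g^{\otimes k}))(h)$ for every $h\in\mathcal{E}$. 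For the norm condition I would use that the embedding $f^{n,k}\mapsto \widehat{f^{n,k}}$ is an isometry from $L^2_n(\N^k)$ onto step functions in $L^2([0,\infty)^k)$, so that the discrete isometry (\ref{eq_discrete_multiple_Wiener_covariance}) gives $\ex[(X^n)^2]=k!\,\|f^{n,k}\|_{L^2_n(\N^k)}^2=k!\,\|\widehat{f^{n,k}}\|_{L^2([0,\infty)^k)}^2 \to k!\,\|g^{\otimes k}\|_{L^2([0,\infty)^k)}^2=\ex[(I^k(g^{\otimes k}))^2]$, the last step by (\ref{eq:multiple_Wiener_isometry}). Theorem \ref{thm_S_transform_convergence_1} then delivers the strong convergence $X^n\to I^k(g^{\otimes k})$.

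The only genuinely delicate point is that Proposition \ref{proposition_discrete_S_transform_mWi_as_integral} compares $(S^n X^n)(\check h^n)$ not with the desired fixed target but with the $n$-dependent quantity $(S\,I^k(\widehat{f^{n,k}}))(h)$; the crux of the argument is therefore to show that this moving target converges to $(S\,I^k(g^{\otimes k}))(h)$, which is exactly where Lemma \ref{lem:diagonal}(ii) together with the continuity of $I^k$ enters. Everything else reduces to the two isometries and the elementary norm estimate above.
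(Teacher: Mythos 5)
Your proposal is correct and follows essentially the same route as the paper: both verify condition (ii) of Theorem \ref{thm_S_transform_convergence_1} by splitting $(S^n X^n)(\check h^n)-(S\,I^k(g^{\otimes k}))(h)$ into the term controlled by Proposition \ref{proposition_discrete_S_transform_mWi_as_integral} and the ``moving target'' term, which is killed by Lemma \ref{lem:diagonal} together with the Wiener--It\^o isometry. The only cosmetic difference is the second-moment condition: you obtain $\ex[(X^n)^2]\to\ex[(I^k(g^{\otimes k}))^2]$ directly from the discrete isometry \eqref{eq_discrete_multiple_Wiener_covariance} and the $L^2([0,\infty)^k)$-convergence of the coefficient functions, whereas the paper specializes the already-proved $S$-transform convergence to $h=g$ and invokes orthogonality of (discrete) multiple Wiener integrals of different orders --- both arguments are equally valid.
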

\begin{proof}
 We check item (ii) in Theorem \ref{thm_S_transform_convergence_1}. To this end, we decompose, for every $g,h\in \mathcal{E}$,
 \begin{eqnarray*}
 && \left|(S^n I^{n,k}((\check{g}^n)^{{\otimes} k} \eins_{\partial_k^c}))(\check{h}^n) -(S I^k(g^{{\otimes} k}))(h)  \right|\nonumber \\
 &\leq&  \left|(S^n I^{n,k}((\check{g}^n)^{{\otimes} k}\eins_{\partial_k^c}))(\check{h}^n) -(S I^k(\widehat{(\check{g}^n)^{{\otimes} k}\eins_{\partial_k^c}}))(h)  \right|
 \\ &&+ 
 \left|(S I^k(\widehat{(\check{g}^n)^{{\otimes} k}\eins_{\partial_k^c}}))(h) -(S I^k(g^{{\otimes} k}))(h)  \right|. 
 \end{eqnarray*}
The first term on the righthand side tends to zero by Proposition \ref{proposition_discrete_S_transform_mWi_as_integral}.
The second one equals, by the isometry for multiple Wiener integrals,
$$
\int_{[0,\infty)^k} h^{\otimes k}(x)\left(\widehat{(\check{g}^n)^{{\otimes} k}\eins_{\partial_k^c}}-g^{\otimes k}\right)(x) dx
$$
and goes to zero by Lemma \ref{lem:diagonal}. Consequently,
\[
\lim_{n\rightarrow\infty} (S^n I^{n,k}((\check{g}^n)^{{\otimes} k}\eins_{\partial_k^c}))(\check{h}^n) =(S I^k(g^{{\otimes} k}))(h) 
\]
for all $k \in \N_0$ and $g, h \in \mathcal{E}$. For $h=g$, this implies 
$\ex[I^{n,k}((\check{g}^n)^{{\otimes} k}\eins_{\partial_k^c})^2]\rightarrow \ex[I^k(g^{{\otimes} k})^2]$
by the orthogonality of (discrete) multiple Wiener integrals of different orders. Thus, Theorem \ref{thm_S_transform_convergence_1}
applies.
\end{proof}

We are now in the position to give the proof of Theorem \ref{thm_S_transform_convergence_2}.

\begin{proof}[Proof of Theorem \ref{thm_S_transform_convergence_2}]
`$(i) \Rightarrow (ii)$': We denote the limit of $(\pi_{\mathcal{H}^n} X^n)_{n\in\N}$ in $L^2(\Omega,\mathcal{F},P)$ by $X$ and recall that
 $$
 \pi_{\mathcal{H}^n} X^n=\sum_{|A|<\infty} \ex[X^n \Xi^n_A]\Xi^n_A =  \sum\limits_{k=0}^{\infty} I^{n,k}(f_{X^n}^{n,k}),
 $$
 with $f^{n,k}_{X^n}$ as defined in (\ref{eq_discrete_chaos_integrand}).
Throughout  the proof we omit the subscripts from the coefficients of the chaos decompositions and write 
$\pi_{\mathcal{H}^n} X^n =\sum\limits_{k=0}^{\infty} I^{n,k}(f^{n,k})$ and $X = \sum\limits_{k=0}^{\infty} I^k(f^k)$.
Thanks to Corollary \ref{cor_observation_Wick_powers_simple_functions} and the 
orthogonality of (discrete) multiple Wiener integrals of different orders, we obtain,  for every $k \in \N_0$,
\begin{align*}
(S^n I^{n,k}(f^{n,k}))(\check{g}^n)  &= 
\frac{1}{k!}\ex[\pi_{\mathcal{H}^n}(X^n) I^{n,k}((\check{g}^n)^{{\otimes} k}\eins_{\partial_k^c})] 
 \rightarrow \frac{1}{k!}\ex[X I^k(g^{{\otimes} k})] = (S I^{k}(f^{k}))(g).
\end{align*}
The estimate $\sup_{n \in \N} \ex[(I^{n,k}(f^{n,k}))^2] \leq \sup_{n \in \N} \ex[(\pi_{\mathcal{H}^n}X^n)^2]<\infty $ now yields,
in view of Theorem \ref{thm_S_transform_convergence_1},  weak $L^2(\Omega,\F, P)$-convergence of $(I^{n,k}(f^{n,k}))_{n\in\N}$ towards $I^{k}(f^{k})$. 
As $\pi_{\mathcal{H}^n}X^n \rightarrow X$ strongly in $L^2(\Omega, \F, P)$, we thus obtain
\begin{align}
\displaybreak[0]
\ex[(I^{n,k}(f^{n,k}))^2] = \ex[I^{n,k}(f^{n,k}) \pi_{\mathcal{H}^n}X^n] 
&\rightarrow \ex[I^{k}(f^{k}) X] = \ex[(I^{k}(f^{k}))^2].\label{eq_Thm_S_transform_convergence_2_1}
\end{align}
Hence,  $I^{n,k}(f^{n,k}) \rightarrow I^{k}(f^{k})$ strongly in $L^2(\Omega, \F, P)$ for all $k \in \N_0$ by Theorem \ref{thm_S_transform_convergence_1}. 
Due to the isometries \eqref{eq:multiple_Wiener_isometry} and \eqref{eq_discrete_multiple_Wiener_covariance}, this implies 
\begin{align}\label{eq_Thm_S_transform_convergence_2_2}
k!\| \widehat{f^{n,k}} \|^2_{{L^2}([0,\infty)^k)} = \| I^{n,k}(f^{n,k})\|^2_{L^2(\Omega,\mathcal{F},P)}\rightarrow \| I^{k}(f^{k})\|^2_{L^2(\Omega,\mathcal{F},P)} = k!\| f^{k}\|^2_{{L^2}([0,\infty)^k)}. 
\end{align}
Moreover,
 for every $g \in \mathcal{E}$, we obtain
\begin{align*}
&\langle g^{{\otimes} k}, \widehat{f^{n,k}} - f^k\rangle_{{L^2}([0,\infty)^k)} = (S I^{k}(\widehat{f^{n,k}}))(g) - (S I^{k}(f^{k}))(g)\\
&= \left((S I^{k}(\widehat{f^{n,k}}))(g) - (S^n I^{n,k}(f^{n,k}))(\check{g}^n)\right) + \ex\left[I^{n,k}(f^{n,k}) \exp^{\diamond_n}(I^n(\check{g}^n))-I^{k}(f^{k})\exp^{\diamond}(I(g))\right] \\ &\rightarrow 0,
\end{align*}
by Propositions \ref{proposition_Wiener_and_Wick_exp} and \ref{proposition_discrete_S_transform_mWi_as_integral}, and the 
$L^2(\Omega, \F, P)$-convergence of $(I^{n,k}(f^{n,k}))_{n\in\N}$ to $I^{k}(f^{k})$.
Since the set $\{g^{{\otimes} k}, g \in \mathcal{E}\}$ is total in $\widetilde{L^2}([0,\infty)^k)$, we may conclude 
that $(\widehat{f^{n,k}}) $ converges weakly in $\widetilde{L^2}([0,\infty)^k)$ to $f^k$  by \cite[Theorem V.1.3]{Yosida}. Finally,  
\eqref{eq_Thm_S_transform_convergence_2_2} turns this weak convergence into strong ${L^2}([0,\infty)^k)$-convergence. In particular,
the $k$th coefficient in the chaos decomposition of the limiting random variable $X$ is the strong ${L^2}([0,\infty)^k)$-limit of 
$(\widehat{f^{n,k}}) $, as asserted.

It remains to show (\ref{eq:limlimsup}). However, by \eqref{eq_Thm_S_transform_convergence_2_1} and the isometries
for (discrete) multiple Wiener integrals,
\begin{align*}
\lim\limits_{n \rightarrow \infty} \sum\limits_{k=m}^{\infty} k!\| \widehat{f^{n,k}} \|^2_{{L^2}([0,\infty)^k)}
&= \lim\limits_{n \rightarrow \infty} \sum\limits_{k=m}^{\infty} \| I^{n,k}(f^{n,k})\|_{L^2(\Omega,\mathcal{F},P)}^2\\ 
&= \lim\limits_{n \rightarrow \infty} \left(\|\pi_{\mathcal{H}^n}X^n\|^2_{L^2(\Omega,\mathcal{F},P)} - \sum\limits_{k=0}^{m-1}\|I^{n,k}(f^{n,k})\|^2_{L^2(\Omega,\mathcal{F},P)}\right)\\
&= \|X\|^2_{L^2(\Omega,\mathcal{F},P)} - \sum\limits_{k=0}^{m-1}\|I^{k}(f^{k})\|^2_{L^2(\Omega,\mathcal{F},P)} \rightarrow 0 
\end{align*}
as $m$ tends to infinity.
\\[0.2cm]
`$(ii) \Rightarrow (i)$': In order to lighten the notation, we again denote the function $f_{X^n}^{n,k}$ from 
\eqref{eq_discrete_chaos_integrand} by $f^{n,k}$. Assuming (ii), the strong $L^2([0,\infty)^k)$-limit of $\widehat{ f^{n,k}}$ exists
and will be denoted $f^k$. We first show that $(I^{n,k}(f^{n,k}))$ converges to $I^{k}(f^{k})$ strongly in $L^2(\Omega,\mathcal{F},P)$ for all $k \in\N_0$
by means of Theorem \ref{thm_S_transform_convergence_1}. To this end, we observe that, for every $g\in \mathcal{E}$,
\begin{eqnarray*}
 (S^n\, I^{n,k}(f^{n,k}))(\check{g}^n)&=& \left((S^n\, I^{n,k}(f^{n,k}))(\check{g}^n)- (S\, I^{k}(\widehat{f^{n,k}}))({g})\right)+(S\, I^{k}(\widehat{f^{n,k}}))({g})
 \\ &\rightarrow& (S\, I^{k}({f^{k}}))({g})
\end{eqnarray*}
by Proposition \ref{proposition_discrete_S_transform_mWi_as_integral} and the isometry for continuous multiple Wiener integrals.
Moreover, again, by the isometries for discrete and continuous multiple Wiener integrals,
\begin{eqnarray*}
 \ex\left[(I^{n,k}(f^{n,k}))^2\right]=k!\| f^{n,k}\|^2_{L^2_n(\N^k)}=k!\|\widehat{f^{n,k}}\|^2_{L^2([0,\infty)^k)}\rightarrow 
 k!\|{f^{k}}\|^2_{L^2([0,\infty)^k)}=  \ex\left[(I^{k}(f^{k}))^2\right].
\end{eqnarray*}
So, Theorem \ref{thm_S_transform_convergence_1} applies indeed. With the $L^2(\Omega,\mathcal{F},P)$-convergence of $I^{n,k}(f^{n,k})$ to $I^{k}(f^{k})$
at hand, we can now decompose, for every $m\in \N$,
\begin{eqnarray}\label{eq:hilf0002}
&& \limsup_{n\rightarrow \infty} \ex\left[\left| \pi_{\mathcal{H}^n} X^n-\sum_{k=0}^\infty  I^k(f^k)\right|^2\right]\nonumber\\
&\leq& 3\limsup_{n\rightarrow \infty} \left(\| \sum\limits_{k=0}^{m-1} I^k(f^k) -\sum\limits_{k=0}^{m-1} I^{n,k}(f^{n,k})\|_{L^2(\Omega,\mathcal{F},P)}^2 + 
\sum\limits_{k=m}^{\infty} \| I^k(f^k) \|_{L^2(\Omega,\mathcal{F},P)}^2\right. \nonumber \\
&&\qquad \qquad \qquad \left. + \sum\limits_{k=m}^{\infty} \| I^{n,k}(f^{n,k})\|_{L^2(\Omega,\mathcal{F},P)}^2\right)
\nonumber \\
&=& 3  \sum\limits_{k=m}^{\infty} k! \| f^k \|_{L^2([0,\infty)^k)}^2 +3 \limsup_{n\rightarrow \infty} \sum\limits_{k=m}^{\infty} k!
\| \widehat{f^{n,k}}\|_{L^2([0,\infty)^k)}^2.
\end{eqnarray}
By Fatou's lemma,
\begin{align*}
\sum\limits_{k=m}^{\infty} k! \| f^k \|_{L^2([0,\infty)^k)}^2 = \sum\limits_{k=m}^{\infty} k! 
\lim_{n\rightarrow \infty} \| \widehat{f^{n,k}}\|_{L^2([0,\infty)^k)}^2
\leq \liminf_{n \rightarrow \infty}\sum\limits_{k=m}^{\infty} k!\| \widehat{f^{n,k}}\|_{L^2([0,\infty)^k)}^2.
\end{align*}
Hence, letting $m$ tend to infinity in (\ref{eq:hilf0002}), we observe, thanks to (\ref{eq:limlimsup}), that $(\pi_{\mathcal{H}^n} X^n)$ converges strongly in $L^2(\Omega,\mathcal{F},P)$.
\end{proof}

We close this section with an example.
\begin{example}\label{example:Wiener_integrals}
 Fix $X\in L^2(\Omega, \mathcal{F},P)$. Theorem \ref{thm_S_transform_convergence_2} with $X^n=X$ for every $n\in \N$,
 implies that the chaos coefficients $f_X^k$, $k\in \N_0$, of $X$ are given as the strong $L^2([0,\infty)^k)$-limit
 of 
 $$
 \widehat{f^{n,k}}(u_1, \ldots, u_k) := 
\frac{1}{k!} \,\ex\left[X   \left(\prod_{l=1}^k \frac{B^n_{\lceil n u_l\rceil}-B^n_{(\lceil n u_l\rceil-1)}}{1/n}\right) \right]\;  \eins_{\{|\{\lceil n u_1\rceil, \ldots, \lceil n u_k\rceil\}\cap \N|=k\}}.
 $$
 This formula can be further simplified when $X$ is $\F_T$-measurable. Then,  one can show, analogously to Example \ref{ex:conditional} (ii),
 that the sequence $(\pi_{\mathcal{H}^n_{\lfloor nT\rfloor}}X)$ converges to $X$ strongly in $L^2(\Omega,\F,P)$. Applying 
 Theorem \ref{thm_S_transform_convergence_2} with the latter sequence, shows that
 the chaos coefficients $f_X^k$, $k\in \N_0$, are the strong $L^2([0,\infty)^k)$-limit of
 $$
 \widehat{f^{n,k}}(u_1, \ldots, u_k) := 
\frac{1}{k!} \,\ex\left[X   \left(\prod_{l=1}^{k} \frac{B^n_{\lceil n u_l\rceil}-B^n_{(\lceil n u_l\rceil-1)}}{1/n}\right) \right]\;  \eins_{\{|\{\lceil n u_1\rceil, \ldots, \lceil n u_k\rceil\}\cap \{1,\ldots,\lfloor nT\rfloor \}|=k\}}.
 $$
 In this case, for each fixed $n\in \N$, only finitely many of the functions $\widehat{f^{n,k}}$, $k\in \N_0$, are not constant zero, and these are simple
 functions with finitely many steps sizes only.

 These two approximation formulas for the chaos coefficients of $X$ are one way to give a rigorous meaning of the heuristic formula
 $$
 f^k_X(u_1,\ldots, u_k)=\frac{1}{k!} \ex\left[X \left(\prod_{l=1}^k  \dot B_{u_l}\right)\right],
 $$
 where $\dot B$ is white noise, which is called Wiener's intuitive recipe in \cite{Cutland_Ng}. The latter paper provides
 another rigorous meaning to Wiener's recipe via nonstandard analysis, which is closely related to our approximation formulas in the special 
 case of symmetric Bernoulli noise. The authors show that
 $$
 f_X^k(^\circ t_1,\ldots,^\circ t_k) =\frac{1}{k!}\;^\circ\ex\left[x(b) \left(\frac{\Delta b_{t_1}}{\Delta t} \cdots \frac{\Delta b_{t_k}}{\Delta t}\right)\right],\quad t_l\in T=\{j \Delta t,\;0\leq j < N^2\},
$$
where $N$ is infinite, $\Delta t=1/N$, $b_t(\omega)= \sqrt{\Delta t} \sum_{s<t} \omega(s)  $, $t\in T$, $\omega  \in \Omega:=\{-1,1\}^T$,
which is equipped with the internal counting measure,
$x(b)$ is a lifting of $X$, $\ex$ is the expectation operator with respect to the internal counting measure, and the circle denotes the standard part.
\end{example}

\section{Strong $L^2$-approximation of the Skorokhod integral and the Malliavin derivative}
\label{sec:malliavin2}

In this section, we apply the Wiener chaos limit theorem (Theorem \ref{thm_S_transform_convergence_2}) in order to prove 
strong $L^2$-approximation results
for the Skorokhod integral and the Malliavin derivative. For the construction of the approximating sequences we compose the discrete Skorokhod 
integral and the discretized Malliavin derivative with the orthogonal projection on $\mathcal{H}^n$, i.e. on the subspace 
of random variables which admit a discrete
chaos decomposition in terms of multiple integrals with respect to the discrete time noise $(\xi^n_i)_{i\in \N}$.

We first treat the Malliavin derivative and aim at proving the following result.
\begin{theorem}\label{thm_Malliavin_derivative_convergence}
 Suppose $(X^n)_{n\in \N}$ converges strongly in $L^2(\Omega,\F,P)$ to $X$ and, for every $n\in \N$, $\pi_{\mathcal{H}^n}X^n \in \D^{1,2}_n$.
 Then the following are equivalent:
 \begin{enumerate}
\item[(i)] $\lim\limits_{m\rightarrow \infty}\limsup\limits_{n \rightarrow \infty}\sum\limits_{k=m}^{\infty} k k! \| \widehat{f^{n,k}_{X_n}} \|_{L^2([0,\infty)^k)}^2 =0$
(with $ \widehat{f^{n,k}_{X_n}}$ as defined in (\ref{eq_thm_S_transform_convergence_2_integrand})).
\item[(ii)] $X\in \mathbb{D}^{1,2}$ and 
the sequence $(D^n_{\left\lceil n \cdot\right\rceil}(\pi_{\mathcal{H}^n}X^n))_{n\in \N}$ converges to $DX$
strongly in $L^2(\Omega \times [0,\infty))$ as $n$ tends to infinity.
\end{enumerate}
\end{theorem}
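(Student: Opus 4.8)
The plan is to reduce the statement to the Wiener chaos limit theorem (Theorem \ref{thm_S_transform_convergence_2}) applied at the level of the derivative process, exploiting that the discretized Malliavin derivative acts on discrete multiple Wiener integrals by lowering the order. First I would record the algebraic identity underlying everything: writing $\pi_{\mathcal{H}^n}X^n=\sum_{k\geq 0}I^{n,k}(f^{n,k}_{X^n})$ as in (\ref{eq_discrete_chaos_decomposition}), a direct computation on the basis vectors $\Xi^n_A$ (using $D^n_j\Xi^n_A=\sqrt n\,\Xi^n_{A\setminus\{j\}}$ if $j\in A$ and $0$ otherwise) gives
$$ D^n_j I^{n,k}(f^{n,k}_{X^n}) = k\, I^{n,k-1}\big(f^{n,k}_{X^n}(\cdot,j)\big), $$
where $f^{n,k}_{X^n}(\cdot,j)$ is the symmetric function in $k-1$ variables obtained by fixing the last argument to $j$. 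By the isometry (\ref{eq_discrete_multiple_Wiener_covariance}) this yields the crucial norm identity
$$ \|D^n\pi_{\mathcal{H}^n}X^n\|^2_{L^2_n(\Omega\times\N)} = \sum_{k=1}^\infty k\,k!\,\|\widehat{f^{n,k}_{X^n}}\|^2_{L^2([0,\infty)^k)}, $$
and likewise $\|DX\|^2_{L^2(\Omega\times[0,\infty))}=\sum_{k\geq 1}k\,k!\,\|f^k_X\|^2$ whenever $X\in\D^{1,2}$. This is exactly what makes condition (i) the natural tail condition: the $(k-1)$th discrete chaos component of the embedded process $D^n_{\lceil n\cdot\rceil}\pi_{\mathcal{H}^n}X^n$ is precisely $t\mapsto k\,I^{n,k-1}(f^{n,k}_{X^n}(\cdot,\lceil nt\rceil))$, whose embedded coefficient, with the last variable playing the role of time, is $k\,\widehat{f^{n,k}_{X^n}}$.

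Next I would exploit that, since $X^n\to X$ strongly in $L^2(\Omega,\F,P)$ is a standing hypothesis, Remark \ref{rem:projections} yields $\pi_{\mathcal{H}^n}X^n\to X$ strongly, so Theorem \ref{thm_S_transform_convergence_2} already provides the componentwise convergence $\widehat{f^{n,k}_{X^n}}\to f^k_X$ strongly in $L^2([0,\infty)^k)$ for every $k$. From this I would deduce, for each fixed $k$, strong $L^2(\Omega\times[0,\infty))$-convergence of the $(k-1)$th component of the derivative process to $t\mapsto k\,I^{k-1}(f^k_X(\cdot,t))$, the corresponding component of $DX$; this is a process-valued analogue of Corollary \ref{cor_observation_Wick_powers_simple_functions} obtained by verifying condition (ii) of Theorem \ref{thm_S_transform_process_convergence_discrete}. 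Concretely, the $S$-transform computation from Proposition \ref{proposition_discrete_S_transform_mWi_as_integral} reduces the required convergence to the inner-product convergence $k\langle \widehat{f^{n,k}_{X^n}}, \widehat{(\check g^n)^{\otimes(k-1)}}\otimes\widehat{\check h^n}\rangle \to k\langle f^k_X, g^{\otimes(k-1)}\otimes h\rangle$, which holds by the componentwise convergence together with (\ref{eq:hilf0001}) and Lemma \ref{lem:diagonal}, while the matching norm convergence is immediate from the norm identity above.

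Having secured componentwise convergence, the equivalence follows along the lines of the proof of Theorem \ref{thm_S_transform_convergence_2}. For $(ii)\Rightarrow(i)$ strong convergence forces norm convergence $\sum_k k\,k!\|\widehat{f^{n,k}_{X^n}}\|^2\to\sum_k k\,k!\|f^k_X\|^2$, and subtracting the finitely many convergent low-order terms gives $\lim_n\sum_{k\geq m}k\,k!\|\widehat{f^{n,k}_{X^n}}\|^2 = \|DX\|^2 - \sum_{k<m}k\,k!\|f^k_X\|^2$, which tends to $0$ as $m\to\infty$, i.e.\ (i). For $(i)\Rightarrow(ii)$ condition (i) bounds the high-order tail uniformly in $n$, so together with the convergent low-order part it shows $\sup_n\|D^n\pi_{\mathcal{H}^n}X^n\|^2<\infty$, whence Fatou gives $\sum_k k\,k!\|f^k_X\|^2<\infty$, i.e.\ $X\in\D^{1,2}$; then the same threefold $\varepsilon$-splitting as in (\ref{eq:hilf0002}) into the low-order terms (which converge strongly by the previous paragraph), the continuous tail $\sum_{k\geq m}k\,k!\|f^k_X\|^2$, and the discrete tail (controlled by (i)) yields strong convergence of $D^n_{\lceil n\cdot\rceil}\pi_{\mathcal{H}^n}X^n$ to $DX$.

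The main obstacle is the bookkeeping in the second step: one must transfer the scalar Wiener chaos limit theorem to the tensor space $L^2(\Omega\times[0,\infty))=L^2(\Omega,\F,P)\otimes L^2([0,\infty))$, correctly identifying the time variable with the last argument of $\widehat{f^{n,k}_{X^n}}$ and checking that the extra weight $k$ produced by the order-lowering turns the weight $k!$ of Theorem \ref{thm_S_transform_convergence_2} into the weight $k\,k!$, so that the tail condition matches (i) verbatim. Once this identification is made precise, all remaining estimates are routine consequences of the isometries and of Lemma \ref{lem:diagonal}.
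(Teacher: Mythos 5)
Your proposal is correct, and it shares the paper's first step --- componentwise convergence $\widehat{f^{n,k}_{X^n}}\to f^k_X$ via Theorem \ref{thm_S_transform_convergence_2} and Remark \ref{rem:projections}, together with the order-lowering identity (\ref{eq:discrete_Malliavin_chaos}) and the resulting isometry (\ref{eq:discrete_Malliavin_isometry}) --- but your concluding mechanism is genuinely different. The paper never touches the chaos components of the derivative \emph{process}: it notes that, given componentwise coefficient convergence, (i) is equivalent to $X\in\D^{1,2}$ together with convergence of the norms $\frac1n\sum_{i}\ex[(D^n_i\pi_{\mathcal{H}^n}X^n)^2]\to\int_0^\infty\ex[(D_tX)^2]\,dt$, and then invokes the weak-convergence result Theorem \ref{thm_Malliavin_derivative_convergence_1}, so that strong convergence follows from the Hilbert-space fact that weak convergence plus convergence of norms implies strong convergence. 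You avoid Theorem \ref{thm_Malliavin_derivative_convergence_1} entirely: you establish a process-valued analogue of Corollary \ref{cor_observation_Wick_powers_simple_functions} (strong $L^2(\Omega\times[0,\infty))$-convergence of each fixed-order component $t\mapsto k\,I^{n,k-1}(f^{n,k}_{X^n}(\cdot,\lceil nt\rceil))$, verified through Theorem \ref{thm_S_transform_process_convergence_discrete} and Proposition \ref{proposition_discrete_S_transform_mWi_as_integral}), obtain $X\in\D^{1,2}$ by Fatou, and then run the threefold tail-splitting of (\ref{eq:hilf0002}) with the weights $k\,k!$; your bookkeeping ($k^2(k-1)!=k\,k!$, time variable as last argument) is sound. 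What each route buys: the paper's is shorter because Theorem \ref{thm_Malliavin_derivative_convergence_1} already encodes the closedness of the Malliavin derivative across discretization levels, so membership $X\in\D^{1,2}$ and the identification of the limit come for free from the duality machinery of Section \ref{section_Malliavin_weak}; yours is self-contained within the chaos framework and is, in effect, a derivative-side analogue of what the paper does for the Skorokhod integral in Section \ref{sec:malliavin2}, where Proposition \ref{thm_process_convergence_implies_chaos_convergences} plays exactly the role of your process-level component convergence --- at the cost of redoing the $S$-transform estimates at the process level.
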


Note first, that by continuity of $D^n_i$ for a fixed time $i\in \N$, we get
\begin{align*}
D^n_i (\pi_{\mathcal{H}^n}X^n) &= \sum\limits_{|A|<\infty} \ex[X^n \Xi^n_{A}] D^n_i \Xi^n_{A}=   \sqrt{n} \sum\limits_{|A|<\infty;\; i\in A} \ex[X^n \Xi^n_{A}]\Xi_{A\setminus \{i\}}^n\\
&=\sqrt{n} \sum_{|B|<\infty; \, i\notin B} 
\ex[X^n \Xi^n_{B\cup\{i\}}]\Xi_{B}^n. 
\end{align*}

By the relation \eqref{eq_discrete_chaos_decomposition}--\eqref{eq_discrete_chaos_integrand} between Walsh decomposition and 
discrete chaos decomposition, this identity can be reformulated as
\begin{equation}\label{eq:discrete_Malliavin_chaos}
D^n_i (\pi_{\mathcal{H}^n}X^n) = \sum\limits_{k=1}^{\infty} k I^{n,k-1}(f_{X^n}^{n,k}(\cdot, i)). 
\end{equation}
Hence, the isometry for discrete multiple Wiener integrals (\ref{eq_discrete_multiple_Wiener_covariance}) implies 
\begin{equation}\label{eq:discrete_Malliavin_isometry}
 \frac{1}{n}\sum_{i=1}^\infty \ex\left[ \left|D^n_i (\pi_{\mathcal{H}^n}X^n)\right|^2 \right]=  \sum\limits_{k=1}^{\infty} k k! \| \widehat{f_{X^n}^{n,k}} \|_{L^2([0,\infty)^k)}^2,
\end{equation}
i.e.,
\begin{equation}\label{eq:discrete_Malliavin_domain}
 \pi_{\mathcal{H}^n}X^n \in \D^{1,2}_n \; \Leftrightarrow \;  \sum\limits_{k=1}^{\infty} k k! \| \widehat{f_{X^n}^{n,k}} \|_{L^2([0,\infty)^k)}^2 <\infty.
\end{equation}
This is in line with the characterization of the continuous Malliavin derivative in terms of the chaos decomposition,
see e.g. \cite{Nualart}, which we show to be equivalent to Definition \ref{prop:Malliavin_domain} in the Appendix:
\begin{equation}\label{eq:Malliavin_domain}
 X \in \D^{1,2} \; \Leftrightarrow \;  \sum\limits_{k=1}^{\infty} k k! \| f^k_X \|_{L^2([0,\infty)^k)}^2 <\infty,
\end{equation}
and, if this is the case,
\begin{equation}\label{eq:Malliavin_chaos}
D_tX = \sum\limits_{k=1}^{\infty} k I^{n,k-1}(f_X^k(\cdot, t)),\; \textnormal{a.e. }t\geq 0,\quad \int_0^\infty \ex[ (D_tX)^2] dt= \sum\limits_{k=1}^{\infty} k k! \| f^k_X \|_{L^2([0,\infty)^k)}^2.
\end{equation}
After these considerations on the connection between (discretized) Malliavin derivative and (discrete) chaos decomposition, the proof 
of Theorem \ref{thm_Malliavin_derivative_convergence} turns out to be rather straightforward.
\begin{proof}[Proof of Theorem \ref{thm_Malliavin_derivative_convergence}]
 By Theorem \ref{thm_S_transform_convergence_2} (in conjunction with Remark \ref{rem:projections}), we observe that, for every 
 $k\in \N_0$, $(\widehat{f^{n,k}_{X_n}})_{n\in\N}$ converges to $f_X^k$ strongly in $L^2([0,\infty)^k)$. Hence,
 by (\ref{eq:discrete_Malliavin_isometry}), (\ref{eq:Malliavin_domain}), and (\ref{eq:Malliavin_chaos}),
 \begin{eqnarray*}
  (i) & \Leftrightarrow & \lim_{n\rightarrow \infty} \sum\limits_{k=1}^{\infty} k k! \| \widehat{f_{X^n}^{n,k}} \|_{L^2([0,\infty)^k)}^2=\sum\limits_{k=1}^{\infty} k k! \| f^k_X \|_{L^2([0,\infty)^k)}^2 <\infty \\
  & \Leftrightarrow &  X\in \D^{1,2} \textnormal{ and } 
  \lim_{n\rightarrow \infty} \frac{1}{n}\sum_{i=1}^\infty \ex\left[ \left|D^n_i (\pi_{\mathcal{H}^n}X^n)\right|^2 \right]=\int_0^\infty \ex[ (D_tX)^2] dt.
 \end{eqnarray*}
Hence, the asserted equivalence is a direct consequence of Theorem \ref{thm_Malliavin_derivative_convergence_1}. 
\end{proof}

We now wish to derive an analogous strong approximation result for the Skorokhod integral, which requires some additional notation.
For every $Z^n\in L^2_n(\Omega\times\N)$ and $k\in \N_0$, we denote
 $$
 \mathfrak f^{n,k}_{Z^n}(i_1,\ldots,i_k,i):=f^{n,k}_{Z^n_i}(i_1,\ldots,i_k)=\left\{ \begin{array}{cl} \ex\left[\frac{n^{k/2}}{k!} Z^n_i \Xi^n_{\{i_1, \ldots, i_k\}}\right], & |\{i_1, \ldots, i_k\}\cap \N|=k \\ 
0, & \textnormal{otherwise}. \end{array}\right.
 $$ 
  Then, with $\pi_{\mathcal{H}^n}Z^n:=(\pi_{\mathcal{H}^n}Z^n_i)_{i\in\N}$, 
$$
\sum_{k=0}^\infty k! \|\mathfrak f^{n,k}_{Z^n}\|^2_{L^2_n(\N^{k+1})}=\| \pi_{\mathcal{H}^n} Z^n\|^2_{L^2_n(\Omega\times \N)}<\infty,
$$
but $\mathfrak f^{n,k}_{Z^n}$ is symmetric in the first $k$ variables only and does not, in general, vanish on the diagonal.
For a function $F$ in $k$ variables, we denote its symmetrization by
$$
\widetilde F(y_1,\ldots,y_k)=\frac{1}{k!} \sum_{\pi} F(y_{\pi(1)},\ldots,y_{\pi(k)}),
$$
where the sum runs over the group of permutations of $\{1,\ldots,k\}$.
With this notation, $\widetilde{\mathfrak f}^{n,k}_{Z^n} \eins_{\partial_{k+1}^c}$ is an element of $\widetilde{L^2_n}(\N^{k+1})$.

We can now state:
\begin{theorem}\label{thm:Skorokhod_convergence_2}
Suppose that, for every $n\in \N$,  $Z^n\in L^2_n(\Omega\times\N)$ and  $\pi_{\mathcal{H}^n}Z^n\in D(\delta^n)$. Moreover,
assume that $(Z^n_{\lceil n\cdot\rceil})_{n\in\N}$ converges to $Z$ strongly in $L^2(\Omega\times [0,\infty))$.
Then, the following assertions are equivalent:
\begin{itemize}
 \item[(i)]   $\lim\limits_{m\rightarrow \infty}\limsup\limits_{n \rightarrow \infty}\sum\limits_{k=m}^{\infty} k! \| \widetilde{\mathfrak f}^{n,k-1}_{Z^n}\eins_{\partial_k^c}\|^2_{L^2_n(\N^k)} =0$.
 \item[(ii)] $Z\in D(\delta)$ and  $(\delta^n(\pi_{\mathcal{H}^n}Z^n))$ converges  to $\delta(Z)$ strongly in $L^2(\Omega,\F,P)$ as $n$ tends to infinity.
 \end{itemize}
\end{theorem}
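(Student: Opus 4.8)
The plan is to mirror the proof of Theorem \ref{thm_Malliavin_derivative_convergence}: I would express the $L^2(\Omega,\F,P)$-norm of the discrete Skorokhod integral through the discrete chaos coefficients of $Z^n$, combine the resulting termwise convergence with the tail condition (i) to deduce convergence of these norms, and finally upgrade the weak convergence supplied by Theorem \ref{thm:Skorokhod_convergence_1} to strong convergence via this norm convergence. The first task is to establish the discrete analogue of the chaos representation of a Skorokhod integral. Starting from $\delta^n(\pi_{\mathcal{H}^n}Z^n)=\sum_i\ex[\pi_{\mathcal{H}^n}Z^n_i|\F^n_{-i}]\,\xi^n_i/\sqrt n$ and the Walsh decomposition of each $Z^n_i$, a direct computation (grouping the products $\Xi^n_{A}\xi^n_i$ with $i\notin A$ according to the set $B=A\cup\{i\}$, exactly as in the derivation of (\ref{eq:discrete_Malliavin_chaos})) shows that the Walsh coefficient of $\delta^n(\pi_{\mathcal{H}^n}Z^n)$ attached to a set $B$ with $|B|=k$ equals $k^{-1}\sum_{i\in B}\mathfrak f^{n,k-1}_{Z^n}(B\setminus\{i\},i)$, up to the normalizing power of $n$. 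Since the full symmetrization of a function already symmetric in its first $k-1$ arguments reduces, off the diagonal, to averaging over the choice of the last argument, this identifies
$$
\delta^n(\pi_{\mathcal{H}^n}Z^n)=\sum_{k=1}^\infty I^{n,k}\big(\widetilde{\mathfrak f}^{n,k-1}_{Z^n}\eins_{\partial_k^c}\big),
$$
and hence, by the isometry (\ref{eq_discrete_multiple_Wiener_covariance}),
$$
\ex\big[(\delta^n(\pi_{\mathcal{H}^n}Z^n))^2\big]=\sum_{k=1}^\infty k!\,\|\widetilde{\mathfrak f}^{n,k-1}_{Z^n}\eins_{\partial_k^c}\|^2_{L^2_n(\N^k)}.
$$

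The continuous counterpart $\ex[(\delta(Z))^2]=\sum_{k\ge1}k!\,\|\widetilde{\mathfrak f^{k-1}_Z}\|^2_{L^2([0,\infty)^k)}$, together with the characterization of $D(\delta)$ by finiteness of this sum, is the standard chaos description of the Skorokhod integral (cf.\ \cite{Nualart}); here $\mathfrak f^{k}_Z$ denotes the $k$th chaos kernel of the process $Z$, i.e.\ $Z_t=\sum_k I^k(\mathfrak f^k_Z(\cdot,t))$. Next I would show that, for each fixed $k$, the embedded kernels $\widehat{\widetilde{\mathfrak f}^{n,k-1}_{Z^n}\eins_{\partial_k^c}}$ converge to $\widetilde{\mathfrak f^{k-1}_Z}$ strongly in $L^2([0,\infty)^k)$. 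The strong $L^2(\Omega\times[0,\infty))$-convergence of $(Z^n_{\lceil n\cdot\rceil})$ to $Z$, through the process analogue of Theorem \ref{thm_S_transform_convergence_2} (obtainable by testing $Z^n$ against time functions $h\in\mathcal{E}$ and applying the chaos limit theorem to the resulting strongly convergent random variables) together with Remark \ref{rem:projections}, yields $\widehat{\mathfrak f}^{n,k}_{Z^n}\to\mathfrak f^k_Z$ strongly; symmetrization is a contraction that commutes with the embedding, so the symmetrized kernels converge as well, and Lemma \ref{lem:diagonal}(i) removes the diagonal indicator without affecting the limit.

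With this termwise convergence in hand, the same elementary argument as in the Malliavin case shows that the tail condition (i) is equivalent to $Z\in D(\delta)$ together with $\ex[(\delta^n(\pi_{\mathcal{H}^n}Z^n))^2]\to\ex[(\delta(Z))^2]$: for nonnegative series whose terms converge and whose tails are uniformly controlled, the total sums converge to the limiting sum, and conversely convergence of the total sums with a finite limit forces the tail condition. Finally I would invoke Theorem \ref{thm:Skorokhod_convergence_1}. Since $(Z^n_{\lceil n\cdot\rceil})$ converges strongly to $Z$, the projected sequence $(\pi_{\mathcal{H}^n}Z^n)_{\lceil n\cdot\rceil}=\pi_{\mathcal{H}^n}(Z^n_{\lceil n\cdot\rceil})$ also converges (strongly, hence weakly) to $Z$ by the process analogue of Lemma \ref{lem:projections}, so Theorem \ref{thm:Skorokhod_convergence_1} applies to $(\pi_{\mathcal{H}^n}Z^n)$ and, under the norm bound contained in (i), gives $Z\in D(\delta)$ and weak convergence $\delta^n(\pi_{\mathcal{H}^n}Z^n)\to\delta(Z)$. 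Adjoining the norm convergence from the previous step turns weak into strong convergence, proving (i)$\Rightarrow$(ii); conversely (ii) yields the norm convergence, hence the tail condition (i).

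I expect the main obstacle to be the first step together with the kernel convergence: one must correctly identify the discrete Skorokhod integral with the symmetrized, off-diagonal discrete multiple Wiener integral $I^{n,k}(\widetilde{\mathfrak f}^{n,k-1}_{Z^n}\eins_{\partial_k^c})$, and then secure strong $L^2([0,\infty)^k)$-convergence of these symmetrized kernels at the level of \emph{processes}, where the interplay between symmetrization, the diagonal correction via Lemma \ref{lem:diagonal}, and the process-level chaos convergence (not directly covered by the random-variable statement of Theorem \ref{thm_S_transform_convergence_2}) must be handled with care.
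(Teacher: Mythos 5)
Your proposal is correct and follows essentially the same strategy as the paper: the Walsh-regrouping identity $\delta^n(\pi_{\mathcal{H}^n}Z^n)=\sum_{k\geq 1} I^{n,k}\bigl(\widetilde{\mathfrak f}^{n,k-1}_{Z^n}\eins_{\partial_k^c}\bigr)$ with the accompanying isometry, strong $L^2$-convergence of the embedded kernels at the level of processes, removal of the diagonal and symmetrization via Lemma \ref{lem:diagonal} and the Minkowski inequality, and finally Theorem \ref{thm:Skorokhod_convergence_1} plus norm convergence to pass from weak to strong convergence. The one point where you deviate is the closing step: you import the classical chaos characterization of $D(\delta)$ and of $\ex[\delta(Z)^2]$ from \cite{Nualart} to translate the tail condition (i) into norm convergence. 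The paper instead applies its own Wiener chaos limit theorem (Theorem \ref{thm_S_transform_convergence_2}) to the random variables $\delta^n(\pi_{\mathcal{H}^n}Z^n)\in\mathcal{H}^n$, obtaining directly that (i) is equivalent to strong $L^2$-convergence to \emph{some} limit, and only then identifies that limit as $\delta(Z)$ via Theorem \ref{thm:Skorokhod_convergence_1}; this keeps the argument self-contained and yields the classical decomposition $\delta(Z)=\sum_{k\geq 1} I^{k}(\widetilde{\mathfrak f}^{k-1}_{Z})$ as a by-product (see the Remark following the paper's proof), whereas your route is slightly shorter at the cost of borrowing that decomposition as an input. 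One caution, which you rightly flagged yourself: the process-level kernel convergence cannot be obtained just by testing $Z^n$ against time functions $h\in\mathcal{E}$ and invoking the random-variable chaos limit theorem, since that only controls the kernels integrated against $h$ in the last variable. The paper's Proposition \ref{thm_process_convergence_implies_chaos_convergences} re-runs the chaos-limit argument at process level: weak $L^2([0,\infty)^{k+1})$-convergence by testing against $g^{\otimes k}\otimes h$ (using Theorem \ref{thm_S_transform_process_convergence_discrete}), followed by norm convergence obtained by pairing the weakly convergent $I^{n,k}(f^{n,k}_{Z^n_{\lceil n\cdot\rceil}})$ against the strongly convergent $Z^n_{\lceil n\cdot\rceil}$. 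Your plan needs this supplement, but since you identified precisely this as the delicate point, the gap is one of detail rather than conception.
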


As a preparation of the proof we note that, for every $M\in \N$,
\begin{eqnarray*}
 && \sum_{i=1}^M \ex[\pi_{\mathcal{H}^n}Z^n_i|\F^n_{M,-i}] \frac{\xi^n_i}{\sqrt{n}}= 
 \sum_{i=1}^M \sum_{|A|<\infty} \ex[Z^n_i\Xi^n_A] \,\ex[ \Xi^n_A|\F^n_{M,-i}] \frac{\xi^n_i}{\sqrt{n}}
 \\ &=& n^{-1/2} \sum_{i=1}^M \sum_{A\subset \{1,\ldots,M\}} \eins_{\{i\notin A\}} \, \ex[Z^n_i\Xi^n_A] \Xi^n_{A\cup\{i\}} 
 =n^{-1/2} \sum_{k=1}^M \sum_{B\subset\{1\ldots,M\}, |B|=k} \sum_{i\in B}\ex[Z^n_i \Xi^n_{B\setminus\{i\}}]\Xi^n_B \\
 &=& n^{-1/2} \sum_{k=1}^M  k \sum_{\substack{(i_1, \ldots, i_k) \in \N^k, \ i_1<\cdots<i_k}}  \eins_{[1,M]}^{\otimes k}(i_1,\ldots,i_k)
 \frac{1}{k}\sum_{j=1}^k \ex[Z^n_{i_j}\Xi^n_{\{i_1,\ldots,i_k\}\setminus\{i_j\}}]\Xi^n_{\{i_1,\ldots,i_k\}}\\
 &=& \sum_{k=1}^M I^{n,k}(\widetilde{\mathfrak f}^{n,k-1}_{Z^n}  \eins_{[1,M]}^{\otimes k} \eins_{\partial_{k}^c}).
\end{eqnarray*}

Hence, by the isometry for discrete multiple Wiener integrals, 
\begin{equation}
 \pi_{\mathcal{H}^n}Z^n \in D(\delta^n) \, \Leftrightarrow \, \sum_{k=1}^\infty k! \| \widetilde{\mathfrak f}^{n,k-1}_{Z^n} \eins_{\partial_{k}^c} \|^2_{L^2_n(\N^k)}<\infty,
\end{equation}
and, if this is the case,
\begin{equation}\label{eq:Skorokhod_discrete_chaos}
  \delta^n(\pi_{\mathcal{H}^n}Z^n)=\sum_{k=1}^\infty I^{n,k}(\widetilde{\mathfrak f}^{n,k-1}_{Z^n} \eins_{\partial_{k}^c}),
\end{equation}
i.e., $f^{n,0}_{\delta^n(\pi_{\mathcal{H}^n}Z^n)}=0$ and, for every $k\in \N$,
$$
f^{n,k}_{\delta^n(\pi_{\mathcal{H}^n}Z^n)} =\widetilde{\mathfrak f}^{n,k-1}_{Z^n} \eins_{\partial_{k}^c}.
$$

For the proof of Theorem \ref{thm:Skorokhod_convergence_2}, we also provide the following variant of 
Theorem \ref{thm_S_transform_convergence_2}, `$(i)\Rightarrow (ii)$',
for stochastic processes.

\begin{proposition}\label{thm_process_convergence_implies_chaos_convergences}
Suppose $Z^n \in L^2_n(\Omega \times \N)$ for every $n\in \N$  and $(Z^n_{\left\lceil n \cdot\right\rceil})$ converges strongly in $L^2(\Omega \times [0,\infty))$ to $Z$  as $n$ tends to infinity. 
Define the functions $\mathfrak{f}_Z^k\in L^2([0,\infty)^{k+1})$ via $\mathfrak{f}_Z^k(t_1,\ldots,t_{k+1}):=f_{Z_{t_{k+1}}}^k(t_1,\ldots,t_k)$. Then, for every $k\in \N_0$, as $n$ tends to infinity,
\begin{equation*}
\widehat{{\mathfrak f}^{n,k}_{Z^n}} \rightarrow  \mathfrak{f}_Z^k
\end{equation*}
strongly in $L^2([0,\infty)^{k+1})$.
\end{proposition}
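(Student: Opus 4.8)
The plan is to transfer the argument for the implication $(i)\Rightarrow(ii)$ of Theorem \ref{thm_S_transform_convergence_2} from random variables to processes, replacing Theorem \ref{thm_S_transform_convergence_1} by its process analogue Theorem \ref{thm_S_transform_process_convergence_discrete}, while carrying the additional time variable $t_{k+1}$ along. Fix $k\in\N_0$ and introduce the two $k$-th chaos processes $W^n_i:=I^{n,k}(\mathfrak f^{n,k}_{Z^n}(\cdot,i))=I^{n,k}(f^{n,k}_{Z^n_i})$, which is exactly the $k$-th chaos component of $\pi_{\mathcal{H}^n}Z^n_i$, and $W_t:=I^k(\mathfrak f_Z^k(\cdot,t))=I^k(f^k_{Z_t})$, the $k$-th chaos component of $Z_t$. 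By the isometries \eqref{eq:multiple_Wiener_isometry} and \eqref{eq_discrete_multiple_Wiener_covariance}, integrated in the time variable, one has $\|W^n_{\lceil n\cdot\rceil}\|^2_{L^2(\Omega\times[0,\infty))}=k!\,\|\widehat{\mathfrak f^{n,k}_{Z^n}}\|^2_{L^2([0,\infty)^{k+1})}$ and $\|W\|^2_{L^2(\Omega\times[0,\infty))}=k!\,\|\mathfrak f_Z^k\|^2_{L^2([0,\infty)^{k+1})}$; in particular $W\in L^2(\Omega\times[0,\infty))$. Thus the assertion will follow once I establish that $W^n_{\lceil n\cdot\rceil}\to W$ strongly in $L^2(\Omega\times[0,\infty))$ and that the coefficient functions converge weakly in $L^2([0,\infty)^{k+1})$.

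First I would prove weak $L^2(\Omega\times[0,\infty))$-convergence of $W^n_{\lceil n\cdot\rceil}$ to $W$ via the weak case of Theorem \ref{thm_S_transform_process_convergence_discrete}. For the $S$-transform condition, chaos orthogonality and \eqref{eq:exponentials_chaos} give $(S^nW^n_i)(\check g^n)=\frac1{k!}\ex[Z^n_i\,I^{n,k}((\check g^n)^{\otimes k}\eins_{\partial_k^c})]$, whence $\frac1n\sum_i(S^nW^n_i)(\check g^n)\check h^n(i)=\frac1{k!}\langle Z^n_{\lceil n\cdot\rceil},\,I^{n,k}((\check g^n)^{\otimes k}\eins_{\partial_k^c})\,\check h^n(\lceil n\cdot\rceil)\rangle_{L^2(\Omega\times[0,\infty))}$. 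By Corollary \ref{cor_observation_Wick_powers_simple_functions}, \eqref{eq:hilf0001}, and the strong convergence $Z^n_{\lceil n\cdot\rceil}\to Z$, the second factor converges strongly, so the right-hand side tends to $\frac1{k!}\langle Z,\,I^k(g^{\otimes k})\,h\rangle=\int_0^\infty(SW_t)(g)h(t)\,dt$, using $\ex[Z_t I^k(g^{\otimes k})]=k!\,\langle f^k_{Z_t},g^{\otimes k}\rangle=k!\,(SW_t)(g)$. The required uniform bound is immediate, since $\int_0^\infty\ex[(W^n_{\lceil ns\rceil})^2]\,ds=\frac1n\sum_i\ex[(W^n_i)^2]\le\|\pi_{\mathcal{H}^n}Z^n\|^2_{L^2_n(\Omega\times\N)}\le\|Z^n_{\lceil n\cdot\rceil}\|^2_{L^2(\Omega\times[0,\infty))}$ is bounded.

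The step I expect to be the main obstacle is upgrading this to strong convergence, i.e. proving $\|W^n_{\lceil n\cdot\rceil}\|\to\|W\|$; this is the process analogue of \eqref{eq_Thm_S_transform_convergence_2_1}. Here I would use that $W^n_i\in\mathcal{H}^n$ is the $k$-th chaos component of $\pi_{\mathcal{H}^n}Z^n_i$, so chaos orthogonality yields $\ex[(W^n_i)^2]=\ex[W^n_iZ^n_i]$ and hence $\int_0^\infty\ex[(W^n_{\lceil ns\rceil})^2]\,ds=\langle W^n_{\lceil n\cdot\rceil},Z^n_{\lceil n\cdot\rceil}\rangle_{L^2(\Omega\times[0,\infty))}$. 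Pairing the strong convergence $Z^n_{\lceil n\cdot\rceil}\to Z$ with the weak convergence $W^n_{\lceil n\cdot\rceil}\to W$ just obtained, this inner product tends to $\langle W,Z\rangle=\int_0^\infty\ex[W_t^2]\,dt=\|W\|^2$, again by orthogonality of the chaoses. Weak convergence together with convergence of the norms gives strong convergence of $W^n_{\lceil n\cdot\rceil}$ to $W$, and via the isometries above the norm convergence $\|\widehat{\mathfrak f^{n,k}_{Z^n}}\|\to\|\mathfrak f_Z^k\|$.

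It remains to promote norm convergence of the coefficient functions to strong convergence, for which I would check weak $L^2([0,\infty)^{k+1})$-convergence against the total family $\{g^{\otimes k}\otimes h:\,g,h\in\mathcal{E}\}$ of the subspace symmetric in the first $k$ arguments, to which both $\widehat{\mathfrak f^{n,k}_{Z^n}}$ and $\mathfrak f_Z^k$ belong. Writing $\langle\widehat{\mathfrak f^{n,k}_{Z^n}},g^{\otimes k}\otimes h\rangle=\int_0^\infty h(t)\,\langle\widehat{f^{n,k}_{Z^n_{\lceil nt\rceil}}},g^{\otimes k}\rangle\,dt$ and comparing it with $\frac1n\sum_i(S^nW^n_i)(\check g^n)\check h^n(i)=\int_0^\infty\check h^n(\lceil nt\rceil)\,\langle\widehat{f^{n,k}_{Z^n_{\lceil nt\rceil}}},\widehat{(\check g^n)^{\otimes k}}\rangle\,dt$, a Cauchy--Schwarz estimate in both variables — using the uniform bound on $\int_0^\infty\|\widehat{f^{n,k}_{Z^n_{\lceil nt\rceil}}}\|^2\,dt$ together with $\|g^{\otimes k}-\widehat{(\check g^n)^{\otimes k}}\|\to0$ from Lemma \ref{lem:diagonal} and $\|h-\check h^n(\lceil n\cdot\rceil)\|\to0$ from \eqref{eq:hilf0001} — shows these two quantities differ by a null sequence. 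Since the second converges to $\langle\mathfrak f_Z^k,g^{\otimes k}\otimes h\rangle$ by the computation of the previous paragraph, the weak convergence follows, and combined with the convergence of the norms it yields the claimed strong $L^2([0,\infty)^{k+1})$-convergence.
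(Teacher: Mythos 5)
Your proposal is correct and follows essentially the same route as the paper's proof: both rest on the identity $(S^n I^{n,k}(f^{n,k}_{Z^n_i}))(\check g^n)=\frac{1}{k!}\ex[Z^n_i I^{n,k}((\check g^n)^{\otimes k}\eins_{\partial_k^c})]$, Corollary \ref{cor_observation_Wick_powers_simple_functions} together with the strong convergence of $(Z^n_{\lceil n\cdot\rceil})$, Theorem \ref{thm_S_transform_process_convergence_discrete} for the weak convergence of the $k$-th chaos processes, the pairing $\langle W^n,Z^n\rangle=\langle W^n,Z\rangle+\langle W^n,Z^n-Z\rangle$ for norm convergence, and totality of $\{g^{\otimes k}\otimes h\}$ in the subspace symmetric in the first $k$ variables. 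The only difference is cosmetic ordering (you establish strong convergence of the chaos processes before transferring to the coefficient functions, while the paper handles the coefficients' weak convergence first), with all key identities and lemmas identical.
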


\begin{proof}
The proof largely follows the arguments  in the proof of Theorem \ref{thm_S_transform_convergence_2}. We spell it out for sake of completeness.
 Let $g,h \in \mathcal{E}$. Then, by the isometry for (discrete) multiple Wiener integrals, Corollary \ref{cor_observation_Wick_powers_simple_functions},
and (\ref{eq:hilf0001}),
\begin{eqnarray}\label{eq:hilf0010}
 &&\left\langle \widehat{\mathfrak{f}^{n,k}_{Z^n}}, \widehat{(\check{g}^n)}^{{\otimes} k} \otimes  \widehat{\check{h}^n}\right\rangle_{L^2([0,\infty)^{k+1})}
= \frac{1}{n}\sum_{i=1}^{\infty} \left\langle {f}^{n,k}_{Z^n_i}, {(\check{g}^n)}^{{\otimes} k}\eins_{\partial_k^c}\right\rangle_{L^2_n(\N^{k})}  \check{h}^n(i) \nonumber\\ &=& 
\frac{1}{n}\sum_{i=1}^{\infty} \ex[(\pi_{\mathcal{H}^n}Z^n_i) I^{n,k}((\check{g}^n)^{\otimes k}\eins_{\partial_k^c})]\check{h}^n(i) =\frac{1}{n}\sum_{i=1}^{\infty} \ex[Z^n_i I^{n,k}((\check{g}^n)^{\otimes k}\eins_{\partial_k^c})]\check{h}^n(i) 
 \nonumber \\ &\rightarrow& \int_0^\infty  \ex[Z_s I^{k}(g^{\otimes k})]h(s)ds= 
 \left\langle \mathfrak{f}^{k}_{Z}, g^{{\otimes} k} \otimes  h\right\rangle_{L^2([0,\infty)^{k+1})}.
\end{eqnarray}
As
\begin{equation}\label{eq:hilf0011}
\sup_{n\in \N} \left\| \widehat{\mathfrak{f}^{n,k}_{Z^n}} \right\|^2_{{L^2([0,\infty)^{k+1})}}
=\sup_{n\in \N} \int_0^\infty \ex\left[ \left|I^{n,k}(f^{n,k}_{Z^n_{\left\lceil n s\right\rceil}} )\right|^2 \right]ds \leq \sup_{n\in\N}\left\| Z^n_{\left\lceil n \cdot\right\rceil}\right\|^2_{L^2(\Omega\times[0,\infty))}<\infty,
\end{equation}
$ \widehat{(\check{g}^n)}^{{\otimes} k} \otimes  \widehat{\check{h}^n}\rightarrow  g^{{\otimes} k} \otimes  h $ 
strongly in $L^2([0,\infty)^{k+1})$ by (\ref{eq:hilf0001}), and the set
$\{g^{{\otimes} k} \otimes h : g,h \in \mathcal{E}\}$ is total in the closed subspace of functions in 
$L^2([0,\infty)^{k+1})$, which are symmetric in the first $k$ variables, we conclude again that $\widehat{\mathfrak{f}^{n,k}_{Z^n}}$ 
converges weakly to $\mathfrak{f}^{k}_{Z}$ in this subspace.
Hence, it only remains to argue that
\begin{equation*}
 \left\| \widehat{\mathfrak{f}^{n,k}_{Z^n}}\right\|^2_{L^2([0,\infty)^{k+1})}\rightarrow \left\| \mathfrak{f}^{k}_{Z}\right\|^2_{L^2([0,\infty)^{k+1})},\quad n\rightarrow \infty.
\end{equation*}
As
\begin{eqnarray*}
\frac{1}{n}\sum_{i=1}^{\infty} \ex[Z^n_i I^{n,k}((\check{g}^n)^{\otimes k}\eins_{\partial_k^c})]\check{h}^n(i) &=&
\frac{1}{n}\sum_{i=1}^{\infty} (S^nI^{n,k}(f^{n,k}_{Z^n_i} ))(\check g^n)\check{h}^n(i), \\
 \int_0^\infty  \ex[Z_s I^{k}(g^{\otimes k})]h(s)ds&=& \int_0^\infty  (S I^k(f^k_{Z_s}))(g)h(s)ds,
\end{eqnarray*}
we may derive from (\ref{eq:hilf0010})--(\ref{eq:hilf0011}) and  Theorem \ref{thm_S_transform_process_convergence_discrete}, 
that  $I^{n,k}(f^{n,k}_{Z^n_{\left\lceil n \cdot\right\rceil}})$ converges to $I^k(f^k_{Z_{\cdot}})$ weakly in $L^2(\Omega\times [0,\infty))$.
Thus,
\begin{eqnarray*}
  \left\| \widehat{\mathfrak{f}^{n,k}_{Z^n}}\right\|^2_{L^2([0,\infty)^{k+1})}&=& 
  \int_0^\infty \ex\left[ I^{n,k}(f^{n,k}_{Z^n_{\left\lceil n s\right\rceil}}) Z_s \right]ds + 
   \int_0^\infty \ex\left[ I^{n,k}(f^{n,k}_{Z^n_{\left\lceil n s\right\rceil}}) (Z^n_{\left\lceil n s\right\rceil}-Z_s) \right]ds \\
   &\rightarrow&  \int_0^\infty \ex\left[ I^{k}(f^{k}_{Z_s}) Z_s \right]ds =\left\| \mathfrak{f}^{k}_{Z}\right\|^2_{L^2([0,\infty)^{k+1})}.
\end{eqnarray*}
\end{proof}

\begin{proof}[Proof of Theorem \ref{thm:Skorokhod_convergence_2}]
By the linearity of the embedding operator $\widehat{(\cdot)}$, Minkowski inequality, Proposition \ref{thm_process_convergence_implies_chaos_convergences}, and Lemma \ref{lem:diagonal}, we obtain,
for every $k\in \N_0$,
\begin{eqnarray*}
 \left\| \widehat{\widetilde{\mathfrak f}^{n,k}_{Z^n}\eins_{\partial_{k+1}^c}}-\widetilde{\mathfrak f}^{k}_Z\right\|_{L^2([0,\infty)^{k+1})}
 = \left\| \widetilde{\widehat{\mathfrak f}^{n,k}_{Z^n}\eins_{\partial_{k+1}^c}}-\widetilde{\mathfrak f}^{k}_Z\right\|_{L^2([0,\infty)^{k+1})}
  \leq
 \left\| \widehat{\mathfrak f^{n,k}_{Z^n}\eins_{\partial_{k+1}^c}}-\mathfrak f^{k}_Z\right\|_{L^2([0,\infty)^{k+1})}\rightarrow 0
\end{eqnarray*}
as $n$ tends to infinity. Thus, due to Theorem \ref{thm_S_transform_convergence_2} and (\ref{eq:Skorokhod_discrete_chaos}),
\begin{eqnarray*}
 (i) \, \Leftrightarrow \,  \left( \delta^n(\pi_{\mathcal{H}^n}Z^n) \right)_{n\in \N} \textnormal{ converges strongly in } L^2(\Omega,\F,P).
\end{eqnarray*}
Now, the implication `$(ii) \Rightarrow (i)$' is obvious, while the converse implication is a consequence 
of Theorem \ref{thm:Skorokhod_convergence_1}.
\end{proof}

\begin{remark}
 As a by-product of the proof of Theorem \ref{thm:Skorokhod_convergence_2}, we  recover, thanks to Theorem \ref{thm_S_transform_convergence_2},
 the well-known chaos decomposition of the Skorokhod integral as
 $$
 \delta(Z)=\sum_{k=1}^\infty I^{k}( \widetilde{\mathfrak f}^{k-1}_{Z}).
 $$
\end{remark}

\section{Binary noise}\label{section_binary}

In this section, we specialize to the case of binary noise, i.e., we suppose that, for  some constant $b>0$, 
$$
P(\{\xi=-1/b\})=\frac{b^2}{b^2+1},\quad P(\{\xi=b\})=\frac{1}{b^2+1}.
$$
 We illustrate, that in this binary case, our approximation formulas for the Malliavin derivative and the Skorokhod integral
give rise to a straightforward numerical implementation.

We recall first that Malliavin calculus on the Bernoulli space is well-studied, see, e.g. \cite{Holden_discrete_Wick}, \cite{Leitz-Martini}, 
\cite{Privault_book}, and the references 
therein, usually with the aim to explain the main ideas of Malliavin calculus  by discussing the analogous 
operators in the simple toy setting.
Note first that $L^2(\Omega,\F^n_i,P)$ equals $\mathcal{H}^n_i$ in the binary case (and in this case only) by observing that 
both spaces have dimension $2^i$ in this case. Hence, $L^2(\Omega,\F^n,P)$ coincides with $\mathcal{H}^n$ for binary noise, and we can 
drop the orthogonal projections $\pi_{\mathcal{H}^n}$ on $\mathcal{H}^n$ in the statement of all previous results. In particular, 
every random variable $X^n\in L^2(\Omega,\F^n,P)$ then admits a chaos decomposition in terms of discrete multiple Wiener integrals,
and the representations of the discretized Malliavin derivative and the discrete Skorokhod integral in terms of the discrete chaos 
in Section \ref{sec:malliavin2} show that these operators coincide with the Malliavin derivative and the Skorokhod integral on 
the Bernoulli space, see \cite{Privault_book}.

In the binary case, the representations for the discrete Mallivain derivative and the discrete Skorokhod integral can be simplified 
considerably. Suppose $X^n\in L^2(\Omega,\F^n,P)$. Then, there is a measurable map $F_{X^n}: \mathbb{R}^\infty \rightarrow \mathbb{R}$ such that
$X^n=F_{X^n}(\xi^n_1,\xi^n_2,\ldots)$. A direct computation shows that, for every $i\in \N$,
\begin{eqnarray}\label{eq:Malliavin_binary}
\nonumber D^n_i X&=& \sqrt{n} \ex[\xi^n_i  F_{X^n}(\xi^n_1,\xi^n_2,\ldots)|\F^n_{-i}] \\ &=& \frac{\sqrt{n} b}{b^2+1} \left( F_{X^n}(\xi^n_1,\ldots,\xi^n_{i-1},b,\xi^n_{i+1},\ldots)-
 F_{X^n}(\xi^n_1,\ldots,\xi^n_{i-1},-1/b,\xi^n_{i+1},\ldots) \right),
\end{eqnarray}
hence, the Malliavin derivative becomes a difference operator. Moreover, for $Z^n\in L^2_n(\Omega\times \N)$
and $N\in \N$, the discrete Skorokhod integral 
can be rewritten as
\begin{eqnarray*}
 \delta^n(Z^n \eins_{[1,N]}) = \sum_{i=1}^N Z^n_i \frac{\xi^n_i}{\sqrt n} -\frac{1}{n}\sum_{i=1}^N (\xi^n_i)^2 D^n_i Z^n_i,
\end{eqnarray*}
which can either be derived from \cite[Proposition 1.8.3]{Privault_book} or by expanding $Z^n_i$ in its Walsh decomposition and noting that, 
for every finite subset $A\subset \N$,
$$
\left(\Xi^n_A- \ex[\Xi^n_A|\F^n_{-i}]\right)\sqrt{n} \xi^n_i =  \left\{\begin{array}{cl} \sqrt{n} \Xi^n_{A\setminus\{i\}} (\xi^n_i)^2, & i\in A \\
 0, & i \notin A \end{array} \right\} =(\xi^n_i)^2 D^n_i \Xi^n_A.
$$
Hence, for  $Z^n\in L^2_n(\Omega\times \N)$ and $N\in \N$,
\begin{align}\label{eq:Skorokhod_binary}
\delta^n(Z^n\eins_{[1,N]} ) &=  \sum_{i=1}^N F_{Z^n_i}(\xi^n_1,\xi^n_2,\ldots) \frac{\xi^n_i}{\sqrt n} 
 \nonumber\\ 
 &-\frac{ (\xi^n_i)^2 b}{\sqrt{n}(b^2+1)} \left( F_{Z^n_i}(\xi^n_1,\ldots,\xi^n_{i-1},b,\xi^n_{i+1},\ldots)-
 F_{Z^n_i}(\xi^n_1,\ldots,\xi^n_{i-1},-1/b,\xi^n_{i+1},\ldots) \right).  
\end{align}

Recall that the discrete noise $(\xi^n_i)_{i\in \N}$, can be constructed from the underlying Brownian motion $(B_t)_{t\in [0,\infty)}$
via a Skorokhod embedding as
$$
\xi^n_i=\sqrt{n} \left(B_{\tau^n_i}- B_{\tau_{i-1}^n}\right),
$$
where, in the binary case,
\begin{equation}\label{eq:embedding_binary}
\tau^n_0:=0 \ , \  \tau^n_i := \inf\left\{s \geq \tau^n_{i-1} : B_{s}-B_{\tau^n_{i-1}} \in \left\{\frac{b}{\sqrt{n}}, \frac{-1}{b \sqrt{n}} \right\}\right\}, 
\end{equation}
and the Brownian motion at the first-passage times $\tau^n_i$ can be simulated by the acceptance-rejection algorithm of \cite{Burq_Jones}.

We close this paper by a toy example which illustrates how to numerically compute Skorokhod integrals by our approximation results.
\begin{example}
 In this example, we approximate the Skorokhod integral $\delta(Z)$
 for the process
 $$
 Z_t= \sign(1/2-t)(B_1 B_{1-t} - (1-t)))\eins_{[0,1]}(t), \quad t\geq 0,
 $$
 where we choose the sign-function to be rightcontinuous at 0. For the discrete time approximation we consider 
 $$
 Z^n_i=  \sign(1/2-i/n)\left(B^n_n B^n_{n-i} -  (1-i/n)\right)\eins_{[1,n-1]}(i),\quad i\in \N,
 $$
 and note that $(Z^n_{\lceil nt \rceil})$ converges to $Z_t$ for almost every $t\geq 0$ in probability by (\ref{eq_Donsker_embedding}).
Hence, by
 uniform integrability and dominated convergence, it is easy to check that $(Z^n_{\lceil n \cdot \rceil})_{n\in \N}$ converges to $Z$ strongly in $L^2(\Omega\times [0,\infty))$. 
 \begin{figure}[ht]
\centering
\includegraphics[scale=0.75]{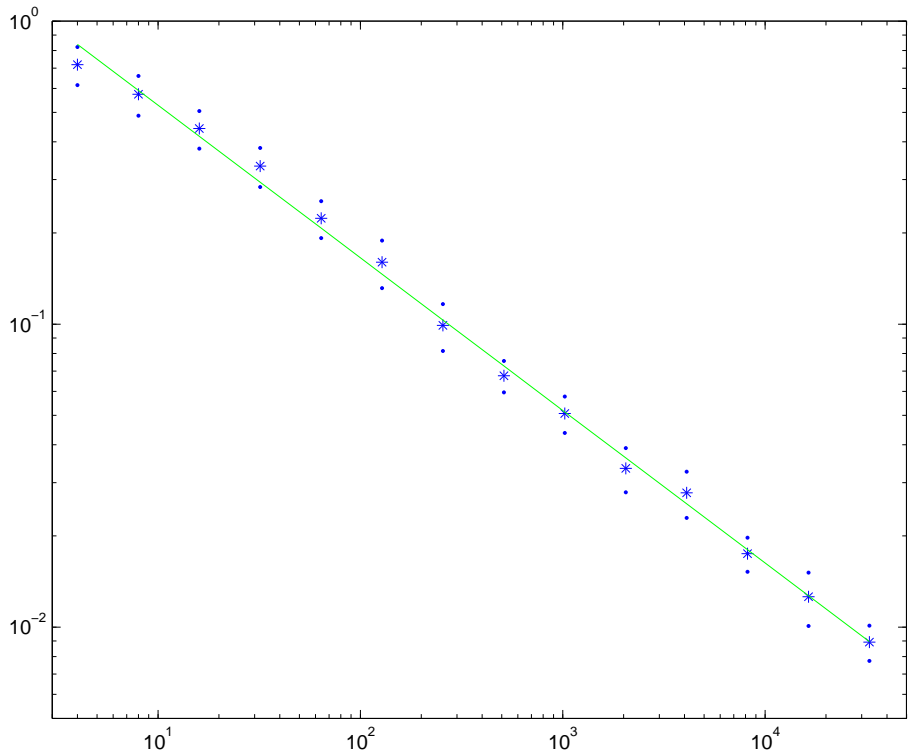}
\vspace*{-1cm} \caption{Log-log plot of the simulated strong $L^2(\Omega,\F,P)$-approximation as the number of time steps increases.}\label{fig1}
\end{figure}
 We next observe that  in the discrete chaos decomposition of  $\delta^n(Z^n)$, all the coefficient functions 
 $f_{\delta^n(Z^n)}^{n,k}$ for $k\geq 4$ vanish, because $Z^n_i$ is a polynomial of degree 2 in $B^n$. 
 Hence, the tail condition 
 in Theorem \ref{thm:Skorokhod_convergence_2} is trivially satisfied and, consequently, $(\delta^n(Z^n))_{n\in \N}$ converges 
 to $\delta(Z)$ strongly in $L^2(\Omega,\F,P)$. We now suppose that $B^n$ is constructed via the Skorokhod embedding (\ref{eq:embedding_binary})
 and simulate, for $n=4,8,\ldots, 2^{15}$, 10000 independent copies $(B^{n,l})_{l=1,\ldots, 10000}$  of $B^n$  by the Burq\&Jones algorithm. 
 The correponding realizations of $\delta^n(Z^n)$ and $\delta(Z)$ along the $l$th trajectory of the underlying Brownian motion 
 are denoted $\delta^{n}_l(Z^n)$ and $\delta_l(Z)$, $l=1,\ldots 10000$, respectively. For the discrete Skorokhod integral 
 we implement formula (\ref{eq:Skorokhod_binary}) with $N=n$, while for the continuous Skorokhod integral we exploit that it can be computed analytically 
 and equals
 $$
 \delta(Z)= B_1 B_{1/2}^{2}-\frac{B_1}{2} - B_{1/2}.
 $$
 Figure \ref{fig1} shows, in the case of symmetric binary noise ($b=1$), a log-log-plot of the empirical mean (indicated by crosses) of 
  $|\delta^n_l(Z^n)-\delta_l(Z)|^2$, $l=1,\ldots, 10000$, and the corresponding (asymptotical) 95\%-confidence bounds  (indicated by dots)
 as the number of time steps $n$ increases. A linear regression (solid line) exhibits a slope of $-0.5036$ and, thus, indicates that strong $L^2(\Omega,\F,P)$-convergence
 takes place at the expected rate of $1/2$. 
\end{example}

\appendix

\section{$S$-transform characterization of the Malliavin derivative}

In this appendix, we prove the equivalence between the definition of the Malliavin derivative in terms of the $S$-transform 
(Definition \ref{prop:Malliavin_domain})  and the more classical characterization in terms of the chaos decomposition, see 
(\ref{eq:Malliavin_domain})--(\ref{eq:Malliavin_chaos}).

\begin{proposition}
 Suppose $X=\sum_k I^k(f^k_X) \in L^2(\Omega,\F,P)$. Then, the following are equivalent:
 \begin{enumerate}
  \item[(i)] There 
 is a stochastic process $Z\in L^2(\Omega\times [0,\infty))$ such that for every $g,h\in \mathcal{E}$,
 $$
 \int_0^\infty  (SZ_s)(g) h(s) ds =\ex\left[X \exp^{\diamond}(I(g)) \left(I(h) - \int_0^\infty  g(s)h(s) ds \right) \right].
 $$
  \item[(ii)] $\sum\limits_{k=1}^{\infty} k k! \| f^k_X \|_{L^2([0,\infty)^k)}^2 <\infty$.
 \end{enumerate}
If this is the case, then $Z_t=\sum\limits_{k=1}^{\infty} k I^{n,k-1}(f_X^k(\cdot, t))$ for almost every $t\geq 0$.
\end{proposition}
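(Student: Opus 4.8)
The plan is to reduce both implications to a single computational identity expressing the right-hand side of (i) purely in terms of the chaos coefficients of $X$. Concretely, I would first prove the key lemma that, for every $g,h\in\mathcal{E}$,
\begin{equation*}
\ex\left[X\exp^{\diamond}(I(g))\left(I(h)-\int_0^\infty g(s)h(s)\,ds\right)\right]=\sum_{k=1}^{\infty}k\,\langle f^k_X,\,g^{\otimes(k-1)}\otimes h\rangle_{L^2([0,\infty)^k)}.
\end{equation*}
To establish it I would insert the chaos expansion $\exp^{\diamond}(I(g))=\sum_{j}\tfrac1{j!}I^j(g^{\otimes j})$ from (\ref{eq:exponentials_chaos}) and apply the standard product formula $I^j(g^{\otimes j})I(h)=I^{j+1}(\widetilde{g^{\otimes j}\otimes h})+j\langle g,h\rangle I^{j-1}(g^{\otimes(j-1)})$ (see e.g.\ \cite{Nualart}). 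The contraction terms assemble to $\langle g,h\rangle\exp^{\diamond}(I(g))$ and exactly cancel the $-\int g h\,ds$ contribution, leaving $\exp^{\diamond}(I(g))(I(h)-\int gh)=\sum_{k\geq1}\tfrac1{(k-1)!}I^k(\widetilde{g^{\otimes(k-1)}\otimes h})$. Pairing with $X$, invoking the orthogonality and the Wiener--It\^o isometry (\ref{eq:multiple_Wiener_isometry}), and using that $f^k_X$ is symmetric (so that symmetrization of $g^{\otimes(k-1)}\otimes h$ may be dropped inside the inner product) yields the claimed formula.

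For the implication $(ii)\Rightarrow(i)$ I would define $Z$ directly by $Z_t:=\sum_{k\geq1}k\,I^{k-1}(f^k_X(\cdot,t))$, where $f^k_X(\cdot,t)$ is symmetric in its first $k-1$ arguments since $f^k_X$ is fully symmetric. By orthogonality and the isometries, $\int_0^\infty\ex[Z_t^2]\,dt=\sum_{k\geq1}k^2(k-1)!\,\|f^k_X\|_{L^2([0,\infty)^k)}^2=\sum_{k\geq1}k\,k!\,\|f^k_X\|_{L^2([0,\infty)^k)}^2$, which is finite precisely by (ii); the same bound shows the defining series converges in $L^2(\Omega\times[0,\infty))$, so $Z\in L^2(\Omega\times[0,\infty))$. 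A direct computation of the $S$-transform gives $(SZ_t)(g)=\sum_{k\geq1}k\,\langle f^k_X(\cdot,t),g^{\otimes(k-1)}\rangle$, hence $\int_0^\infty(SZ_t)(g)h(t)\,dt=\sum_{k\geq1}k\,\langle f^k_X,g^{\otimes(k-1)}\otimes h\rangle$, which equals the right-hand side of (i) by the key lemma.

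For $(i)\Rightarrow(ii)$ I would take the given $Z\in L^2(\Omega\times[0,\infty))$, expand each $Z_t=\sum_{j\geq0}I^j(h^j_t)$ in its Wiener chaos, and set $\mathfrak g^j(t_1,\dots,t_j,t):=h^j_t(t_1,\dots,t_j)$, so that $\int_0^\infty\ex[Z_t^2]\,dt=\sum_j j!\,\|\mathfrak g^j\|_{L^2([0,\infty)^{j+1})}^2<\infty$. The assumed identity together with the key lemma reads $\sum_{j\geq0}\langle\mathfrak g^j,g^{\otimes j}\otimes h\rangle=\sum_{j\geq0}(j+1)\langle f^{j+1}_X,g^{\otimes j}\otimes h\rangle$ for all $g,h\in\mathcal{E}$. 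To separate the orders I would replace $g$ by $\lambda g$: using $\|\mathfrak g^j\|\leq C/\sqrt{j!}$ both sides become entire functions of $\lambda$, and comparing the coefficients of $\lambda^j$ gives $\langle\mathfrak g^j,g^{\otimes j}\otimes h\rangle=(j+1)\langle f^{j+1}_X,g^{\otimes j}\otimes h\rangle$ for every $j$ and all $g,h$. Since $\{g^{\otimes j}\otimes h:g,h\in\mathcal{E}\}$ is total in the subspace of $L^2([0,\infty)^{j+1})$ of functions symmetric in the first $j$ variables (by polarization, as used earlier for $\{g^{\otimes k}\}$), and both $\mathfrak g^j$ and $(j+1)f^{j+1}_X$ lie in that subspace, I conclude $\mathfrak g^j=(j+1)f^{j+1}_X$, i.e.\ $Z_t=\sum_{k\geq1}k\,I^{k-1}(f^k_X(\cdot,t))$, and $\|Z\|^2_{L^2(\Omega\times[0,\infty))}=\sum_{k\geq1}k\,k!\,\|f^k_X\|^2_{L^2([0,\infty)^k)}$ is finite, which is (ii).

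I expect the main obstacle to be the key lemma, since it requires the careful bookkeeping of the product formula and the cancellation of the contraction terms; the subsequent order-by-order separation in $(i)\Rightarrow(ii)$ is the other delicate point, relying on the analyticity/scaling argument in $\lambda$ together with the totality of $\{g^{\otimes(k-1)}\otimes h\}$ to upgrade the family of scalar identities to the pointwise equality of the coefficient functions.
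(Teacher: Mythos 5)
Your proposal is correct and follows essentially the same route as the paper's appendix proof: the same key identity $\exp^{\diamond}(I(g))\left(I(h)-\int_0^\infty g(s)h(s)\,ds\right)=\sum_{k\geq 1}\frac{1}{(k-1)!}I^k(\widetilde{g^{\otimes(k-1)}\otimes h})$, the same definition of $Z$ for (ii)$\Rightarrow$(i), and the same power-series-in-$\lambda$ coefficient comparison plus totality of tensor products for (i)$\Rightarrow$(ii). The only cosmetic differences are that you verify the key identity via the multiplication formula rather than by $S$-transform injectivity, and you run the order-separation argument on the jointly measurable coefficient functions $\mathfrak g^j\in L^2([0,\infty)^{j+1})$ instead of pointwise in $s$ (which neatly sidesteps the paper's remark about choosing the exceptional null set independently of $g$ and $\alpha$).
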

\begin{proof}
We first note that, for every $f,g\in \mathcal{E}$, 
\begin{equation}\label{eq:a1}
 \exp^{\diamond}(I(g)) \left(I(h) - \int_0^\infty  g(s)h(s) ds \right)= \sum_{k=1}^\infty \frac{1}{(k-1)!} I^k(\widetilde{(g^{\otimes(k-1)} \otimes h})),
\end{equation}
which can be verified by computing the $S$-transform of both sides.
By the Cauchy-Schwarz inequality, we obtain for every $f,g\in \mathcal{E}$,
\begin{eqnarray}\label{eq:hilf0005}
\nonumber && \sum_{k=1}^\infty \int_{[0,\infty)^k} \left|k f_{X}^{k}(x)
({g}^{{\otimes} k-1}{\otimes} {h})(x) \right|dx \\
 &\leq & \left(\sum_{k=1}^\infty k! \| {f_{X}^{k}}\|^2_{L^2([0,\infty)^{k})}\right)^{1/2}
 \left(\sum_{k=1}^\infty \frac{k}{(k-1)!}  \| g \|^{2(k-1)}_{{L^2}([0,\infty))} \| h \|^{2}_{{L^2}([0,\infty))}\right)^{1/2}<\infty.
\end{eqnarray}
Hence, Fubini's theorem implies
\begin{equation*}
  \sum_{k=1}^\infty \int_{[0,\infty)^k} k f_{X}^{k}(x) (g^{{\otimes} k-1} \otimes {h})(x) dx 
  = \int_0^\infty \left(\sum_{k=1}^\infty \int_{[0,\infty)^{k-1}} k f_{X}^{k}(x,t) g^{{\otimes} k-1}(x)\right) h(t) dt,
 \end{equation*}
 i.e., by (\ref{eq:a1}) and the isometry for multiple Wiener integrals,
 \begin{equation}\label{eq:hilf0004}
  \ex\left[X \exp^{\diamond}(I(g)) \left(I(h) - \int_0^\infty  g(s)h(s) ds \right) \right]=\int_0^\infty \left(\sum_{k=1}^\infty \int_{[0,\infty)^{k-1}} k f_{X}^{k}(x,t) g^{{\otimes} k-1}(x)\right) h(t) dt
 \end{equation}
 for every $g,h\in \mathcal{E}$.
\\[0.2cm]
`$(i) \Rightarrow (ii)$':
Assuming (i) and noting that (\ref{eq:hilf0004}) holds for every $g,h\in \mathcal{E}$, we observe that
for every $g\in \mathcal{E}$, $\alpha\in\R$, and Lebesgue-almost every $s\in [0,\infty)$,
$$
 \sum_{k=1}^\infty \alpha^{k-1} \langle f^{k-1}_{Z_s}(\cdot), g^{\otimes (k-1)} \rangle_{L^2([0,\infty)^{k-1})} =  (S Z_s)(\alpha g) = \sum_{k=1}^\infty  \alpha^{k-1}\langle k f_X^k(\cdot,s), g^{\otimes (k-1)} \rangle_{L^2([0,\infty)^{k-1}}. 
$$
(Note, that the Lebesgue null set can be chosen independent of $g$, $\alpha$. Indeed, one can first take $\alpha \in \mathbb{Q}$ and
step functions $g$ with rational step sizes and interval limits, and then pass to the limit). 
Comparing the coefficients in the power series and noting that $\{g^{\otimes k}, g\in \mathcal{E}\}$ is total in  $\widetilde{L^2}([0,\infty)^{k})$, we 
obtain, for every $k\geq 1$ and almost every $s\in [0,\infty)$,
\begin{equation}\label{eq:hilf0008}
k f_{X}^{k}(\cdot,s)=f^{k-1}_{Z_s}.
\end{equation}
Therefore, the isometry for multiple Wiener-It\^o integrals implies                                                                                                          
\begin{equation}\label{eq:a2}
\sum\limits_{k=1}^{\infty} k k! \| f_X^k \|_{L^2([0,\infty)^k)}^2 = \int_0^\infty \ex[|Z_s|^2] ds<\infty.
\end{equation}
`$(ii) \Rightarrow (i)$': Define
$
Z_t=\sum\limits_{k=1}^{\infty} k I^{n,k-1}(f_X^k(\cdot, t)).
$
Assuming (ii), we observe by the first identity in (\ref{eq:a2}) that $Z$ belongs to $L^2(\Omega\times[0,\infty))$. By the isometry 
for multiple Wiener integrals and the chaos decomposition of a Wick exponential we get, for every $g,h\in \mathcal{E}$.
$$
\int_0^\infty  (SZ_s)(g) h(s) ds= \int_0^\infty \left( \sum\limits_{k=1}^{\infty} \int_{[0,\infty)^{k-1}} k f_X^k(x, t) g^{\otimes k-1}(x) dx \right) h(t)dt.
$$
Hence, (\ref{eq:hilf0004}) concludes. 
\end{proof}

\end{document}